\documentclass[a4paper]{amsart}

\usepackage[T1]{fontenc}
\usepackage[utf8]{inputenc}
\usepackage[english]{babel}
\usepackage{csquotes}
\usepackage[
	doi=false,
	giveninits=false,
	isbn=false,
	maxalphanames=99,
	maxbibnames=99,
	style=alphabetic]
{biblatex}
\bibliography{literature.bib}

\usepackage{mathtools}
 
\usepackage{bbm}
\usepackage{mathrsfs}

\usepackage{amsmath, amssymb, amsthm}

\makeatletter
\def\blfootnote{\xdef\@thefnmark{}\@footnotetext}
\makeatother

\newcommand{\abs}[1]{\left| #1 \right|}

\newcommand{\Cb}{C_{\text{b}}}
\newcommand{\CN}{\mathbb C}
\newcommand{\D}{\mathrm{d}}
\newcommand{\dup}[2]{\left\langle #1, #2 \right\rangle}
\newcommand{\FT}{\mathcal{F}}
\newcommand{\ga}{\gamma_{\text{a}}}
\newcommand{\Ind}{\mathbbmss{1}}
\newcommand{\iu}{\mathrm{i}}
\newcommand{\iv}[1]{\frac{1}{ #1}}
\newcommand{\jb}[1]{\langle #1 \rangle}
\newcommand{\LSS}{\mathcal{S}}
\newcommand{\N}{{\mathbb N}}
\newcommand{\norm}[1]{\left\Vert #1 \right\Vert}
\newcommand{\opT}{\mathcal{T}}
\newcommand{\qa}{q_{\text{a}}}
\newcommand{\R}{{\mathbb R}}
\renewcommand{\Re}{\operatorname{Re}}
\newcommand{\set}[1]{\left\{ #1 \right\}}

\newcommand{\T}{{\mathbb T}}

\newcommand{\Z}{{\mathbb Z}}

\theoremstyle{plain}
\newtheorem{thm}{Theorem}
\newtheorem{lem}[thm]{Lemma}
\newtheorem{prop}[thm]{Proposition}
\newtheorem{rem}[thm]{Remark}

\theoremstyle{definition}

\usepackage[disable]{todonotes}

\usepackage{IEEEtrantools}

\usepackage{hyperref}

\title{On the global well-posedness of the quadratic NLS
on $L^2(\R) + H^1(\T)$}

\author{L. Chaichenets}
\address{Leonid Chaichenets, Department of Mathematics,
Institute for Analysis, Karlsruhe Institute of Technology,
76128 Karlsruhe, Germany}
\email{leonid.chaichenets@kit.edu}
\author{D. Hundertmark}
\address{Dirk Hundertmark, Department of Mathematics,
Institute for Analysis, Karlsruhe Institute of Technology,
76128 Karlsruhe, Germany}
\email{dirk.hundertmark@kit.edu}
\author{P. Kunstmann}
\address{Peer Christian Kunstmann, Department of Mathematics,
Institute for Analysis, Karlsruhe Institute of Technology,
76128 Karlsruhe, Germany}
\email{peer.kunstmann@kit.edu}
\author{N. Pattakos}
\address{Nikolaos Pattakos, Department of Mathematics,
Institute for Analysis, Karlsruhe Institute of Technology,
76128 Karlsruhe, Germany}
\email{nikolaos.pattakos@kit.edu}

\begin{document}
{\let\thefootnote\relax\footnote{
\copyright 2019 by the authors. Faithful reproduction of
this article, in its entirety, by any means is permitted for
noncommercial purposes.}}

\subjclass[2010]{35A01, 35A02, 35Q55.}
\keywords{Nonlinear Schrödinger equation, local well-posedness,
global well-posedness, Gronwall’s inequality, Strichartz estimates.}

\begin{abstract}
We study the one dimensional nonlinear Schrödinger equation with power
nonlinearity $\abs{u}^{\alpha - 1} u$ for $\alpha \in [1,5]$ and
initial data $u_0 \in L^2(\R) + H^1(\T)$. We show via Strichartz
estimates that the Cauchy problem is locally well-posed. In the case
of the quadratic nonlinearity ($\alpha = 2$) we obtain \emph{global}
well-posedness in the space $C(\R, L^2(\R) + H^1(\T))$ via Gronwall’s
inequality.
\end{abstract}

\maketitle

\section{Introduction and main results}
We are interested in the Cauchy problem for the nonlinear Schrödinger
equation (NLS) with power nonlinearity on the space $L^2(\R) + H^1(\T)$,
i.e. \begin{equation}
\label{eqn:cauchy_nls}
\left\{
\begin{IEEEeqnarraybox}[][c]{rCl}
\iu u_t (x, t) + \partial_{x}^{2} u (x,t) \pm
\abs{u}^{\alpha-1} u & = & 0 \qquad (x, t) \in \R \times \R, \\
u(\cdot, 0) & = & u_0,
\end{IEEEeqnarraybox}
\right.
\end{equation}
where $u_0 = v_0 + w_0 \in L^2(\R) + H^1(\T)$ and $\alpha \in [1,5]$. By
$\T$ we denote the one-dimensional torus, i.e. $\T = \R / 2\pi\Z$, where 
we consider functions on $\T$ to be $2\pi$-periodic functions on $\R$.
Before we state our main results, let us mention that the NLS
\eqref{eqn:cauchy_nls} is globally well-posed in $L^2(\R)$ via
Strichartz estimates and mass conservation (see \cite{tsutsumi1987})
and it is globally well-posed in $L^{2}(\T)$ via the Fourier restriction
norm method and mass conservation (see \cite{bourgain1993a}). 
Motivation for the investigation of hybrid initial values
$u_0 \in L^2(\R) + H^1(\T)$ comes from high--speed optical fiber 
communications, where in a certain approximation the behavior of 
pulses in glass--fiber cables is described by a NLS equation.  
The NLS \eqref{eqn:cauchy_nls} with initial data in $H^s(\R) + H^s(\T)$ was 
referred to in \cite{pattakos2018} as the tooth problem. A tooth is, for example, $w_0$ 
restricted to one period. We think of the addition of $v_0$ to $w_0$ as eliminating
finitely many of these teeth in the underlying periodic signal. 
A periodic signal is the simplest type of a non-decaying signal,
encoding, for example, an infinite string of ones 
if there is exactly one tooth per period. However, such 
a purely periodic signal carries no information. One would like to be
able to change it, at least locally. This leads necessarily to a hybrid
formulation of the NLS where the  signal is the sum of a periodic and a
localized part, the localized part being able to remove one or more of
the teeth in the underlying periodic signal. This way one can model, 
for example, a signal consisting of two infinite blocks of ones which
are separated by a single zero, or even far more complicated patterns.
In the optics literature the phenomenon of ghost pulses (see
\cite{mamyshev1999} and \cite{zakharov1999}) occurs which in our
terminology corresponds to the regrowth of missing teeth of the solution
to the NLS \eqref{eqn:cauchy_nls}.

The case of the cubic nonlinearity ($\alpha=3$) and the initial data
$u_0 \in H^s(\R) + H^s(\T)$, where $s \geq 0$, was studied by the
authors in \cite{pattakos2018}, where the existence of weak solutions
in the extended sense was established. Moreover, under some further
assumptions, unconditional uniqueness was obtained. In this paper, due
to the non-algebraic structure of the nonlinearity in
\eqref{eqn:cauchy_nls} (for $\alpha \neq 3,5$) we have to use different
methods. For the relation between the solutions of \cite{pattakos2018}
and the solutions of Theorem \ref{thm:mainthm1} we refer to Remark
\ref{equal_solutions}. 

To state the main results of this paper we need some preparation. 
Let $u = v + w \in C([0, T], L^2(\R) + H^1(\T))$ where $w$ satisfies the
periodic NLS on the torus with initial data $w_0$. The following is
known about $w$ (the case $\alpha \geq 2$ has been treated in
\cite[Theorem 2.1]{lebowitz1988} while the remaining case
$\alpha \in [1, 2)$ is presented in Theorem
\ref{thm:gwp_cauchy_quadratic_nls_torus}).
\begin{thm}
\label{thm:lwp_nls_T}
The Cauchy problem for the periodic NLS
\begin{equation}
\label{eqn:cauchy_nls_torus}
\left\{
\begin{IEEEeqnarraybox}[][c]{rCl}
\iu w_t (x, t) + \partial_{x}^{2} w (x,t) \pm
\abs{w}^{\alpha-1} w & = & 0 \qquad (x, t) \in \T \times \R, \\
w(\cdot, 0) & = & w_0.
\end{IEEEeqnarraybox}
\right.
\end{equation}
is locally well-posed in $H^1(\T)$ for $\alpha \geq 1$. That means that
for any $w_0 \in H^1(\T)$ there is a unique $w \in C([0,T], H^1(\T))$
satisfying \eqref{eqn:cauchy_nls_torus} in the mild sense. The
guaranteed time of existence $T$ depends only on $\norm{w_0}_{H^1(\T)}$.
\end{thm}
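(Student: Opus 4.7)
The plan is to solve the Duhamel formulation
\[
\Phi(w)(t) = e^{\iu t \partial_x^2} w_0 \mp \iu \int_0^t e^{\iu (t - s)\partial_x^2} \abs{w}^{\alpha - 1} w(s) \, \D s
\]
by a Banach fixed-point argument. Since $e^{\iu t \partial_x^2}$ is a unitary isometry on every $H^k(\T)$, only estimates on the nonlinearity $f(z) := \abs{z}^{\alpha - 1} z$ are required. The one-dimensional Sobolev embedding $H^1(\T) \hookrightarrow L^\infty(\T)$, which in particular makes $H^1(\T)$ a Banach algebra, is the crucial tool throughout.

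The first step is the a priori bound $\norm{f(w)}_{H^1(\T)} \leq C \norm{w}_{H^1(\T)}^\alpha$, which I would derive from the pointwise estimates $\abs{\partial_z f(z)}, \abs{\partial_{\bar z} f(z)} \leq C \abs{z}^{\alpha - 1}$ (Wirtinger derivatives, valid for all $\alpha \geq 1$) combined with the weak chain rule $\partial_x f(w) = \partial_z f(w) \, \partial_x w + \partial_{\bar z} f(w) \, \partial_x \bar w$ and the embedding into $L^\infty(\T)$.

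In the range $\alpha \geq 2$ the derivative $f'$ is locally Lipschitz on $\CN$, so one also obtains the clean $H^1 \to H^1$ difference bound
\[
\norm{f(w) - f(\tilde w)}_{H^1(\T)} \leq C \bigl( \norm{w}_{H^1(\T)} + \norm{\tilde w}_{H^1(\T)} \bigr)^{\alpha - 1} \norm{w - \tilde w}_{H^1(\T)},
\]
and the contraction mapping on the closed ball of radius $R := 2 \norm{w_0}_{H^1(\T)}$ in $C([0,T], H^1(\T))$ closes as soon as $T = T(\norm{w_0}_{H^1(\T)})$ is small enough.

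The main obstacle is the low-regularity range $1 \leq \alpha < 2$, in which $f$ is not of class $C^1$ on $\CN$ at the origin and the above $H^1 \to H^1$ Lipschitz estimate is not available. To handle this I would run the contraction in the weaker $L^2$-distance on the complete metric space
\[
X_{T, R} := \bigl\{ w \in L^\infty([0,T], H^1(\T)) : \norm{w}_{L^\infty_t H^1_x} \leq R \bigr\},
\]
using the elementary pointwise estimate $\abs{f(z) - f(\tilde z)} \leq C (\abs{z} + \abs{\tilde z})^{\alpha - 1} \abs{z - \tilde z}$ combined with the Sobolev embedding; completeness of $X_{T,R}$ in the $L^\infty_t L^2_x$ metric follows from the fact that $H^1$-balls are closed in $L^2$. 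The $H^1$-bound on $f(w)$ above ensures that $\Phi$ preserves $X_{T,R}$; time continuity of the fixed point with values in $H^1(\T)$ is then read off from the mild formula, and uniqueness in $C([0,T], H^1(\T))$ is a direct consequence of the same $L^2$-Lipschitz estimate together with Gronwall's inequality. Since all constants in the argument depend on $w_0$ only through $\norm{w_0}_{H^1(\T)}$, the existence time $T$ does so as well, as claimed.
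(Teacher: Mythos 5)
Your proposal is correct, but it follows a genuinely different route from the paper. The paper does not prove Theorem \ref{thm:lwp_nls_T} by a direct fixed point in $H^1(\T)$: for $\alpha \geq 2$ it simply cites \cite[Theorem 2.1]{lebowitz1988}, and for $\alpha \in [1,2)$ it invokes the Appendix, where the nonlinearity is smoothed with the (periodized) heat kernel, local theory is run in Bourgain spaces $X^{s,b}_\delta$ via the embedding $X^{0,\frac{3}{8}} \hookrightarrow L^4$, conservation laws are established for the smoothed flow, and the limit $\varepsilon \to 0+$ yields global $L^2(\T)$- and then $H^1(\T)$-well-posedness (Theorems \ref{thm:lwp_torus}, \ref{thm:gwp_nls_T_l2}, \ref{thm:gwp_cauchy_quadratic_nls_torus}), of which the quoted local statement is a byproduct. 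Your argument is the classical elementary alternative: unitarity of $e^{\iu t\partial_x^2}$ on $H^1(\T)$ plus the embedding $H^1(\T)\hookrightarrow L^\infty(\T)$ give a contraction in $C([0,T],H^1(\T))$ for $\alpha\geq 2$, and for the range $1\leq\alpha<2$, where $f(z)=\abs{z}^{\alpha-1}z$ has only H\"older-continuous derivatives, you use Kato's two-norm trick: contraction with respect to the $L^\infty_t L^2_x$ distance on an $H^1$-ball, whose completeness follows from weak lower semicontinuity (cf.\ Lemma \ref{lem:app_BA}), with $H^1$-persistence and time continuity recovered from the Duhamel formula and unconditional uniqueness in $C([0,T],H^1(\T))$ from the size estimate (Lemma \ref{lem:size_estimate} with $w=0$) and Gronwall (Lemma \ref{lem:gronwall_integral}). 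What each approach buys: yours is self-contained, avoids smoothing and Bourgain spaces entirely, treats all $\alpha\geq1$ uniformly with $T=T(\norm{w_0}_{H^1(\T)})$, and in fact yields \emph{unconditional} uniqueness in $C([0,T],H^1(\T))$, which is stronger than the conditional uniqueness the paper's Bourgain-space construction provides for $\alpha\in[1,2)$ (cf.\ the remark following Theorem \ref{thm:mainthm1}); the paper's heavier machinery is nevertheless not wasted, since the same apparatus produces the conservation laws and the \emph{global} $H^1(\T)$ result of Theorem \ref{thm:gwp_cauchy_quadratic_nls_torus} (an essential input for Theorem \ref{thm:mainthm2}) and also covers rough $L^2(\T)$ data, neither of which your local contraction argument delivers by itself.
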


A solution $w$ to the periodic NLS at hand dictates that the local part $v$ has to be a solution of the Cauchy problem for
the \emph{modified NLS}
\begin{equation}
\label{eqn:cauchy_modnls}
\left\{
\begin{IEEEeqnarraybox}[][c]{rCl}
\iu v_t (x, t) + \partial_{x}^{2} v (x,t) \pm
G_\alpha (w, v) & = & 0 \qquad (x, t) \in \R \times \R, \\
v(\cdot, 0) & = & v_0,
\end{IEEEeqnarraybox}
\right.
\end{equation}
where
\begin{equation}
G_\alpha (w, v) \coloneqq 
\abs{v + w}^{\alpha-1}(v + w) - \abs{w}^{\alpha - 1} w.
\end{equation}
The main results of the paper are the following two theorems on local and global wellposedness of NLS \eqref{eqn:cauchy_modnls} and consequently NLS \eqref{eqn:cauchy_nls}.

\begin{thm}[Local well-posedness of the NLS \eqref{eqn:cauchy_nls}]
\label{thm:mainthm1}
For $\alpha \in [1, 5]$ the Cauchy problem \eqref{eqn:cauchy_modnls}
is locally well-posed in $C([0,T], L^2(\R)) \cap
L^{\frac{4(\alpha+1)}{\alpha-1}}([0,T], L^{\alpha + 1}(\R))$ for any
$v_0 \in L^{2}(\R)$.

Hence, the original Cauchy problem \eqref{eqn:cauchy_nls} is locally
well-posed.

In the case $\alpha \in [1, 5)$, the guaranteed time of existence $T$
depends only on $\norm{v_0}_2$ and $\norm{w_0}_{H^1(\T)}$, whereas, for
$\alpha = 5$, $T$ depends on the profile of $v_0$ and
$\norm{w_0}_{H^1(\T)}$.
\end{thm}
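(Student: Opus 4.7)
The plan is a standard Kato-style contraction mapping argument based on the Duhamel formulation of \eqref{eqn:cauchy_modnls}. Since $w$ is furnished by Theorem \ref{thm:lwp_nls_T} on some interval $[0, T_1]$ and enters \eqref{eqn:cauchy_modnls} only as a given coefficient, I would recast the equation for $v$ as the fixed-point problem $v = \Phi(v)$, where
\begin{equation*}
\Phi(v)(t) := e^{\iu t \partial_x^2} v_0 \mp \iu \int_0^t e^{\iu (t - s) \partial_x^2} G_\alpha(w(s), v(s)) \, \D s,
\end{equation*}
and look for a fixed point in a closed ball of the Strichartz space $X_T := C([0, T], L^2(\R)) \cap L^q([0, T], L^{\alpha+1}(\R))$, with $q := \frac{4(\alpha+1)}{\alpha-1}$ and $T \leq T_1$ to be chosen. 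The admissibility identity $\frac{2}{q} + \frac{1}{\alpha+1} = \frac{1}{2}$ is satisfied on $\R$, so the usual homogeneous and inhomogeneous Strichartz estimates apply; a technical point is that I would use them with \emph{two different} dual pairs to treat the two pieces of $G_\alpha$.

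The nonlinearity splits via the elementary pointwise bound
\begin{equation*}
\abs{G_\alpha(w, v)} \leq C_\alpha \bigl( \abs{v}^\alpha + \abs{w}^{\alpha - 1} \abs{v} \bigr),
\end{equation*}
valid for every $\alpha \geq 1$ by the difference estimate for $z \mapsto \abs{z}^{\alpha - 1} z$. The pure-power piece $\abs{v}^\alpha$ I would place in the dual Strichartz pair $\bigl( q', (\alpha+1)' \bigr)$: a direct computation gives $\norm{\abs{v}^\alpha}_{L^{(\alpha+1)'}_x} = \norm{v}_{L^{\alpha+1}_x}^\alpha$, and H\"older in time then yields
\begin{equation*}
\norm{\abs{v}^\alpha}_{L^{q'}_t L^{(\alpha+1)'}_x} \leq T^{(5 - \alpha)/4} \norm{v}_{L^q_t L^{\alpha+1}_x}^\alpha,
\end{equation*}
with a positive time power for $\alpha \in [1, 5)$ and zero at the mass-critical exponent $\alpha = 5$. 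The mixed term $\abs{w}^{\alpha - 1} \abs{v}$ needs separate treatment because $w$ is $2\pi$-periodic on $\R$ and hence fails to lie in any $L^p(\R)$ with $p < \infty$. I would place it in $L^1([0, T], L^2(\R))$, using the trivial estimate
\begin{equation*}
\norm{\abs{w}^{\alpha - 1} v}_{L^1_t L^2_x} \leq T \norm{w}_{L^\infty([0, T] \times \R)}^{\alpha - 1} \norm{v}_{L^\infty_t L^2_x},
\end{equation*}
and bound the $w$-factor via the Sobolev embedding $H^1(\T) \hookrightarrow L^\infty(\T)$ combined with Theorem \ref{thm:lwp_nls_T}, obtaining a constant depending only on $\norm{w_0}_{H^1(\T)}$.

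The difference $G_\alpha(w, v_1) - G_\alpha(w, v_2)$ admits the analogous pointwise bound in terms of $\bigl( \abs{v_1}^{\alpha-1} + \abs{v_2}^{\alpha-1} + \abs{w}^{\alpha-1} \bigr) \abs{v_1 - v_2}$, so the same two dual Strichartz pairs close the contraction estimate for $\Phi(v_1) - \Phi(v_2)$. For $\alpha \in [1, 5)$ I would fix the ball radius $R := 2C\norm{v_0}_2$ determined by the homogeneous Strichartz bound on $e^{\iu t \partial_x^2} v_0$; every nonlinear term then acquires a factor of the form $T^{(5 - \alpha)/4} R^{\alpha - 1}$ or $T \norm{w_0}_{H^1(\T)}^{\alpha - 1}$, which can be made small by shrinking $T$, so the contraction closes on a time interval depending only on $\norm{v_0}_2$ and $\norm{w_0}_{H^1(\T)}$.

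The main obstacle is the endpoint $\alpha = 5$, where the H\"older-in-time power vanishes. Here I would instead exploit that $\norm{e^{\iu t \partial_x^2} v_0}_{L^6([0, T], L^6(\R))} \to 0$ as $T \to 0$, which follows from the finiteness of the corresponding global Strichartz norm on $\R$ by dominated convergence, but whose rate depends on the \emph{profile} of $v_0$ rather than on $\norm{v_0}_2$ alone; taking the ball radius $R := 2 \norm{e^{\iu t \partial_x^2} v_0}_{L^6_t L^6_x}$ forces $T$ to depend on $v_0$ itself in the critical case, exactly as stated. Uniqueness in $X_T$ is part of the fixed-point conclusion, and the local well-posedness of the original NLS \eqref{eqn:cauchy_nls} follows immediately by setting $u := v + w$, whose identity with the intended solution is verified by a direct computation using the equations for $v$ and $w$.
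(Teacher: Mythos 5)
Your proposal is correct and follows essentially the same route as the paper: a Duhamel--Strichartz contraction in the space $C([0,T],L^2(\R))\cap L^{q}([0,T],L^{\alpha+1}(\R))$, with the nonlinearity handled as an effective $\abs{v}^\alpha$ piece in the dual pair built on $(\alpha+1)'$ (yielding the factor $T^{(5-\alpha)/4}$) and an effective $\abs{w}^{\alpha-1}\abs{v}$ piece in $L^1_tL^2_x$ controlled by $\norm{w}_{L^\infty}$ via $H^1(\T)\hookrightarrow L^\infty(\T)$, plus the standard profile-dependent critical argument at $\alpha=5$. The only cosmetic difference is that the paper realizes your ``two pieces'' as an honest decomposition $G_\alpha = \Ind_A G_\alpha + \Ind_{A^c} G_\alpha$ with $A=\set{\abs{w}\leq\abs{v_1}+\abs{v_2}}$, which is the small step you would need to make your pointwise bound usable with two different dual Strichartz norms.
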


\begin{rem}
In the case $\alpha \in [1, 2]$, the intersection in Theorem
\ref{thm:mainthm1} is not needed, i.e. one has unconditional
well-posedness for the perturbation $v$. However, it is not clear whether
the Cauchy problem \eqref{eqn:cauchy_nls} is unconditionally
well-posed, since the wellposedness we obtain for the
periodic part $w$ is only conditional (see the proof of Theorem \ref{thm:lwp_torus}).
\end{rem}

\begin{rem}
\label{equal_solutions}
Notice that the weak solution in the extended sense $\tilde{u}$
constructed in \cite{pattakos2018} and the solution $u$ from Theorem
\ref{thm:mainthm1} coincide. This can be seen as follows: $u$ is a weak
solution in the extended sense, which follows by the definition,
Plancherel’s theorem and the dominated convergence theorem. Moreover, in
the aforementioned paper it was observed that $\tilde{u}$ is unique
among those solutions, which can be approximated by smooth solutions.
This is true for $u$ and hence $\tilde{u} = u$ follows.
\end{rem}

For $\alpha = 2$, we need the Cauchy problem for the periodic NLS
\eqref{eqn:cauchy_nls_torus} to be globally well-posed in $H^{1}(\T)$.
Although this is claimed to be well-known in the community, we could
not find a suitable reference. Several people refer to
\cite{bourgain1993a} for this, however in \cite[Proposition
  5.73]{bourgain1993a} $\alpha\geq3$ is required (in our
notation). Moreover, in part ii) of the remark on page 152 in
\cite{bourgain1993a}, Bourgain mentions that one could get existence
of a solution for the quadratic nonlinearity using Schauder’s fixed
point theorem, but one would loose uniqueness.  Hence, we provide a
proof in the Appendix (Theorem
\ref{thm:gwp_cauchy_quadratic_nls_torus}). This global existence and
uniqueness result on the torus, together with a close inspection of
the mass $\int \abs{v}^2 \D{x}$ are essential ingredients in our proof
of global well-posedness of \eqref{eqn:cauchy_nls} on the “tooth problem
space” $L^2(\R) + H^1(\T)$.

\begin{thm}[Global well-posedness of the quadratic NLS]
\label{thm:mainthm2}
For $\alpha = 2$ and $v_{0}\in L^{2}(\R)$ the unique solution $v$ of
\eqref{eqn:cauchy_modnls} from Theorem \ref{thm:mainthm1} extends
globally and obeys the bound
\begin{equation}
\label{eqn:exp_bound}
\norm{v(\cdot, t)}_2 \leq
\norm{v_0}_2 \exp \left[\norm{w}_{L^\infty_t L^\infty_x} t \right]
\qquad \forall t \in [0, \infty).
\end{equation}
Hence, the original Cauchy problem \eqref{eqn:cauchy_nls} for
$\alpha = 2$ is globally well-posed.
\end{thm}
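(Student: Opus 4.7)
The plan is to establish the a priori bound \eqref{eqn:exp_bound} on $\norm{v(\cdot,t)}_2$ via Gronwall's inequality, and then iterate Theorem \ref{thm:mainthm1} (whose lifespan for $\alpha=2$ depends only on $\norm{v_0}_2$ and $\norm{w_0}_{H^1(\T)}$) to extend $v$ to all of $[0,\infty)$. Theorem \ref{thm:gwp_cauchy_quadratic_nls_torus} from the appendix provides a global solution $w \in C(\R, H^1(\T))$ whose $H^1(\T)$-norm is uniformly bounded in time (from conservation of mass and energy, plus Gagliardo--Nirenberg in the focusing case), and the Sobolev embedding $H^1(\T) \hookrightarrow L^\infty(\T)$ then ensures $\norm{w}_{L^\infty_t L^\infty_x} < \infty$, which is precisely the constant appearing in \eqref{eqn:exp_bound}.

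The heart of the argument is a pointwise cancellation inside the standard $L^2$ energy identity
\begin{equation*}
\tfrac{1}{2}\frac{d}{dt}\norm{v(\cdot,t)}_2^2 = \mp \operatorname{Im}\int_\R G_2(w,v)\,\bar v\,\D x.
\end{equation*}
Setting $u = v+w$ and expanding
\begin{equation*}
G_2(w,v)\,\bar v = (\abs{u}u - \abs{w}w)(\bar u - \bar w) = \abs{u}^3 + \abs{w}^3 - \abs{u}u\bar w - \abs{w}w\bar u,
\end{equation*}
the diagonal terms $\abs{u}^3,\abs{w}^3$ are real, while $\operatorname{Im}(w\bar u) = -\operatorname{Im}(u\bar w)$, so
\begin{equation*}
\operatorname{Im}\bigl(G_2(w,v)\,\bar v\bigr) = -(\abs{v+w}-\abs{w})\operatorname{Im}(u\bar w) = -(\abs{v+w}-\abs{w})\operatorname{Im}(v\bar w),
\end{equation*}
the last equality using $u\bar w = v\bar w + \abs{w}^2$ with $\abs{w}^2$ real. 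Combining the reverse triangle inequality $\abs{\abs{v+w}-\abs{w}} \leq \abs{v}$ with $\abs{\operatorname{Im}(v\bar w)} \leq \abs{v}\abs{w}$ yields the pointwise bound $\abs{\operatorname{Im}(G_2(w,v)\bar v)} \leq \abs{w}\abs{v}^2$, hence
\begin{equation*}
\frac{d}{dt}\norm{v(\cdot,t)}_2^2 \leq 2\,\norm{w(\cdot,t)}_{L^\infty(\R)}\,\norm{v(\cdot,t)}_2^2.
\end{equation*}
Gronwall's inequality then delivers \eqref{eqn:exp_bound}, and since both $\norm{v(\cdot,t)}_2$ and $\norm{w(\cdot,t)}_{H^1(\T)}$ stay bounded on every $[0,T]$, Theorem \ref{thm:mainthm1} can be reapplied with a lifespan bounded away from zero to march $v$ to arbitrarily large times.

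The main obstacle I anticipate is making the above differentiation rigorous, since Theorem \ref{thm:mainthm1} only yields $v \in C([0,T], L^2(\R)) \cap L^{12}([0,T], L^3(\R))$, which is too rough for the classical chain rule. I would handle this by approximation: pick smooth data $v_0^n \to v_0$ in $L^2(\R)$ (say $v_0^n \in H^s(\R)$ for some $s\geq 1$) and solve \eqref{eqn:cauchy_modnls} with the \emph{same} $w$ to obtain, via persistence of regularity for the modified NLS, stronger solutions $v^n$ on a common time interval. The energy identity and the pointwise cancellation above are classical for each $v^n$, giving \eqref{eqn:exp_bound} with $v$ replaced by $v^n$; passing to the limit through the continuous-dependence statement in Theorem \ref{thm:mainthm1} transfers the bound to $v$ itself, and the iteration described above completes the argument.
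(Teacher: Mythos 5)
Your formal computation is exactly the paper's: the cancellation you exploit (the term $\abs{v+w}\,\abs{v}^2$-type contribution is real and drops out, leaving only $(\abs{v+w}-\abs{w})\operatorname{Im}(v\bar w)$, bounded pointwise by $\abs{w}\abs{v}^2$) is the same one that appears in \eqref{eqn:modsoln_bound}--\eqref{eqn:diff_ineq}, and your Gronwall-plus-iteration scheme, using that the lifespan in Theorem \ref{thm:mainthm1} depends only on $\norm{v_0}_2$ and $\norm{w_0}_{H^1(\T)}$ and that $\norm{w}_{L^\infty_t L^\infty_x}<\infty$ by the conservation laws on the torus, is the paper's global argument verbatim.

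The gap is in the rigor step, which is precisely what all of Section \ref{sec:quadraticglobal} is about. You smooth only the data ($v_0^n\in H^s$, $s\ge 1$) and assert that ``persistence of regularity'' yields solutions $v^n$ for which the energy identity is \emph{classical}. But $w$ is only in $C(\R,H^1(\T))$ (this is all Theorem \ref{thm:gwp_cauchy_quadratic_nls_torus} provides) and $z\mapsto\abs{z}z$ is $C^1$ but not $C^2$; consequently the regularity of $v^n$ is capped at roughly the $H^1(\R)$ level, since $\partial_x G_2(w,v)$ contains terms proportional to $\partial_x w$, and reaching $H^2(\R)$ would require $w\in H^2(\T)$ (compare Lemma \ref{lem:fac}, which already demands one extra derivative of $w$) as well as two derivatives of the non-$C^2$ nonlinearity. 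Hence $v^n$ is \emph{not} $C^1$ in time with values in $L^2$, and $\frac{\D}{\D{t}}\norm{v^n}_2^2=2\Re\dup{\dot v^n}{v^n}$ is not a classical chain-rule computation as you claim. The argument can be rescued at the $H^1$ level via the standard duality lemma (if $v\in C([0,T],H^1)\cap C^1([0,T],H^{-1})$ then $t\mapsto\norm{v(t)}_2^2$ is $C^1$ with derivative given by the $H^{-1}$--$H^1$ pairing), but you neither invoke this nor prove $H^1(\R)$-persistence for \eqref{eqn:cauchy_modnls} with such rough $w$, and both require genuine work. This is exactly why the paper instead mollifies the nonlinearity with the heat kernel ($G^\varepsilon$), approximates $w$ by smooth $w^n\in C([0,T],H^\infty(\T))$ and $v_0$ by Schwartz data, obtains truly $C^1$-in-time solutions (Lemma \ref{lem:smoothness_modnls}), performs your computation there (Proposition \ref{prop:norm_estimate}), and then removes the regularizations by two separate limit arguments (the $n\to\infty$ convergence inside Proposition \ref{prop:norm_estimate} and Lemma \ref{lem:vanishing_smoothing}). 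So: right identity and right global scheme, but the justification of the differentiation --- the actual substance of the proof --- is missing as stated.
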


Although the local well-posedness result of Theorem \ref{thm:mainthm1}
covers the whole range $\alpha \in [1, 5]$, the methods of the proof of
Theorem \ref{thm:mainthm2} only work for $\alpha = 2$. A more precise
explanation is given in Remark \ref{rem:failure_global}.

Of course, one can consider hybrid problems for other dispersive equations. 
Here we confine ourselves to a remark on the KdV. 

\begin{rem}
Observe that the tooth problem for the KdV reduces to a known setting.
More precisely, consider real solutions of
\begin{equation}
\label{tooth_KdV}
\left\{
\begin{IEEEeqnarraybox}[][c]{rCl}
u_t (x, t) + u_{xxx} (x,t) + u_x u & = & 0
\qquad (x, t) \in \R \times \R, \\
u(\cdot, 0) & = & u_0 = v_0 + w_0 \in H^{s_1}(\R) + H^{s_2}(\T).
\end{IEEEeqnarraybox}
\right.
\end{equation}
Let $u = v + w \in C([0, T], H^{s_1}(\R) + H^{s_2}(\T))$,
where $s_2 \in \N$ and $w$ is a global solution of the periodic KdV for
the initial data $w_0$ (see \cite[Theorem 5]{bourgain1993b}). Then $v$
solves
\begin{equation*}
v_t + v_{xxx} + v_x v + (w v)_x = 0
\end{equation*}
with the initial data $v_0$, which is the KdV with the potential $w$.
This problem has been studied in e.g. \cite[Section 3.1]{erdogan2016}
using parabolic regularization. There it has been shown that
$v$ satisfies an exponential bound similar to \eqref{eqn:exp_bound}.
Combining both results we obtain:
\begin{quote}
For $s_1, s_2 \in \N$ satisfying $s_1 \geq 2$ and
$s_2 \geq s_1 + 1$ the KdV tooth problem, i.e., the Cauchy problem \eqref{tooth_KdV}, is globally
well-posed in $H^{s_1}(\R) + H^{s_2}(\T)$.
\end{quote}
\end{rem}

The paper is organized as follows: In Section \ref{section_pre} 
we state the required prerequisites for the proofs of the main theorems. 
In Section \ref{sec:lwp} we present
the proof of Theorem \ref{thm:mainthm1} and in Section
\ref{sec:quadraticglobal} we present the proof of Theorem
\ref{thm:mainthm2}. Finally, in the Appendix we justify that the quadratic and
subquadratic periodic NLS \eqref{eqn:cauchy_nls_torus} is globally
well-posed in $H^{1}(\T)$. 

\section{Prerequisites}
\label{section_pre}
Let us fix the notation and state some results necessary for the proof
of our main theorems. For the purpose of smoothing we will use the heat
kernel $(\phi_\varepsilon)_{\varepsilon \geq 0}$. Recall, that
$\phi_\varepsilon = \delta_0$, if $\varepsilon = 0$, and
\begin{equation*}
\phi_\varepsilon(x) = \iv{2 \sqrt{\pi \varepsilon}} \,
e^{-\frac{\abs{x}^2}{4 \varepsilon}} \qquad
\forall x \in \R,
\end{equation*}
if $\varepsilon > 0$. We shall denote the convolution (in the space
variable $x$) by e.g. $u \ast \phi_\varepsilon$. 

For $s \in \R$ and $\Omega \in \set{\R, \T}$ we shall denote by
$H^s(\Omega)$ the Sobolev spaces on $\Omega$. Also, we set
$H^\infty(\Omega) \coloneqq \cap_{s \in \R} H^s(\Omega)$. By $\FT$ we
will denote the Fourier transform on $\R$.

We will use the following simple lemma, which can be found e.g. in
\cite[Lemma 3.9]{chaichenets2018}.
\begin{lem}[Size estimate]
\label{lem:size_estimate}
Let $\alpha \geq 1$. Then the following \emph{size estimate}
\begin{eqnarray}
\label{eqn:size_estimate}
& & \abs{\abs{v_1 + w}^{(\alpha - 1)}(v_1 + w) -
\abs{v_2 + w}^{(\alpha - 1)}(v_2 + w)} \\
& \leq &
\alpha \max \set{1, 2^{\alpha - 1}} 
\left(\abs{v_1}^{\alpha - 1} + \abs{v_2}^{\alpha - 1} +
\abs{w}^{\alpha - 1} \right) \abs{v_1 - v_2}
\end{eqnarray}
holds for any $v_1, v_2, w \in \CN$.
\end{lem}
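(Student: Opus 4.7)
The plan is to derive the estimate from a Lipschitz-type bound for the map $f \colon \CN \to \CN$, $f(z) = \abs{z}^{\alpha-1} z$, applied to $z_1 = v_1 + w$ and $z_2 = v_2 + w$, followed by an elementary comparison to bring the right-hand side into the stated form.

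First I would establish the Jacobian bound $\abs{Df(z)[h]} \leq \alpha \abs{z}^{\alpha-1}\abs{h}$ for $z \in \CN \setminus \set{0}$ and $h \in \CN$. Viewing $f$ as a smooth map $\R^2 \to \R^2$ off the origin, a short computation with $z = x + \iu y$ gives
$$Df(z)[h] = (\alpha-1)\abs{z}^{\alpha-3}\Re(\bar z h)\, z + \abs{z}^{\alpha-1} h,$$
so the estimate $\abs{\Re(\bar z h)} \leq \abs{z}\abs{h}$ implies the bound. Applying the fundamental theorem of calculus along the segment $\gamma(t) = z_2 + t(z_1 - z_2)$, $t \in [0,1]$, I then obtain
$$\abs{f(z_1) - f(z_2)} \leq \alpha \int_0^1 \abs{\gamma(t)}^{\alpha-1} \D t \cdot \abs{z_1 - z_2} \leq \alpha \max(\abs{z_1}, \abs{z_2})^{\alpha-1} \abs{z_1 - z_2},$$
using $\abs{\gamma(t)} \leq \max(\abs{z_1}, \abs{z_2})$. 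The case $\alpha = 1$ is trivial, and segments meeting the origin cause no issue since $f$ is continuous everywhere and Lipschitz on bounded sets.

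It then remains to compare $\max(\abs{v_1+w}, \abs{v_2+w})^{\alpha-1}$ with $\abs{v_1}^{\alpha-1} + \abs{v_2}^{\alpha-1} + \abs{w}^{\alpha-1}$. Starting from $\max(\abs{v_1+w}, \abs{v_2+w}) \leq \max(\abs{v_1}, \abs{v_2}) + \abs{w}$, I would split into two regimes. For $\alpha \in [1, 2]$, subadditivity of $t \mapsto t^{\alpha-1}$ on $[0,\infty)$ yields
$$(\max(\abs{v_1},\abs{v_2}) + \abs{w})^{\alpha-1} \leq \abs{v_1}^{\alpha-1} + \abs{v_2}^{\alpha-1} + \abs{w}^{\alpha-1},$$
giving the constant $\alpha$. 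For $\alpha \geq 2$, convexity of $t \mapsto t^{\alpha-1}$ gives $(a+b)^{\alpha-1} \leq 2^{\alpha-2}(a^{\alpha-1}+b^{\alpha-1})$, producing the constant $\alpha \cdot 2^{\alpha-2} \leq \alpha \cdot 2^{\alpha-1}$. Combining the two regimes yields the stated prefactor $\alpha \max\set{1, 2^{\alpha-1}}$. I do not anticipate any serious obstacle; the main subtlety is simply choosing the right elementary inequality in each regime to match the claimed constant.
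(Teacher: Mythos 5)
Your argument is correct: the Jacobian bound $\abs{Df(z)[h]} \leq \alpha \abs{z}^{\alpha-1}\abs{h}$ plus the fundamental theorem of calculus along the segment, followed by subadditivity (for $\alpha \in [1,2]$) respectively convexity (for $\alpha \geq 2$) of $t \mapsto t^{\alpha-1}$, yields exactly the stated constant $\alpha \max\set{1, 2^{\alpha-1}}$, and the only delicate point — segments through the origin — is indeed harmless since $f(z)=\abs{z}^{\alpha-1}z$ is $C^1$ on all of $\CN$ for $\alpha>1$ (with $Df(0)=0$) and linear for $\alpha=1$. The paper itself gives no proof, citing Lemma 3.9 of the reference, and your argument is the standard one used there, so there is nothing to add.
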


A pair of exponents $(r, q) \in [2, \infty]^2$ is called
\emph{admissible} (in one dimension), if
\begin{equation}
\label{eqn:admissible}
\frac{2}{q} + \iv{r} = \frac{1}{2}.
\end{equation}
Let us denote by $\qa(r)$ the solution of \eqref{eqn:admissible} for any
$r \in [2, \infty]$. Another pair of exponents
$(\rho, \gamma) \in [1, 2]$ shall be called \emph{dually admissible}, if
$(\rho', \gamma') \in [2, \infty]$ is admissible, i.e. if
\begin{equation}
\label{eqn:dually_admissible}
\frac{2}{\gamma} + \iv{\rho} = \frac{5}{2}.
\end{equation}
For any $\rho \in [1, 2]$ we denote by $\ga(\rho)$ the solution of \eqref{eqn:dually_admissible}.

\begin{prop}[Strichartz estimates]
(Cf. \cite[Theorem 1.2]{keel1998})
\label{prop:strichartz}
Let $(r, \qa(r))$ be admissible and $(\rho, \ga(\rho))$ be dually
admissible. Then there is a constant $C = C(r, \rho) > 0$ such that for
any $T > 0$, any $v_0 \in L^2(\R)$ and any
$F \in L^\ga(\rho)([0, T], L^\rho(\R))$ the \emph{homogeneous} and
\emph{inhomogeneous} Strichartz estimates
\begin{eqnarray}
\label{eqn:homogeneous_strichartz}
\norm{e^{\iu t \partial_{x}^{2}} v_0}_{L^{\qa(r)}([0,T], L^r(\R))} & \leq &
C \norm{v_0}_{L^2(\R)}, \\
\label{eqn:inhomogeneous_strichartz}
\norm{\int_0^t
e^{\iu (t - \tau) \partial_{x}^{2}} F(\cdot, \tau)}_{L^{\qa(r)}([0,T], L^r(\R))}
& \leq &
C \norm{F}_{L^{\ga(\rho)}([0,T], L^\rho(\R))}.
\end{eqnarray}
hold.
\end{prop}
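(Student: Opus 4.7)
The plan is to deduce the proposition from two classical building blocks for the free Schrödinger group $U(t) := e^{\iu t \partial_x^2}$ on $\R$: the $L^2$-isometry property $\norm{U(t)f}_{L^2} = \norm{f}_{L^2}$, which follows from Plancherel's theorem since $U(t) = \FT^{-1} e^{-\iu t\xi^2} \FT$, and the one-dimensional dispersive estimate
\begin{equation*}
\norm{U(t)f}_{L^\infty(\R)} \leq C\abs{t}^{-1/2}\norm{f}_{L^1(\R)},
\end{equation*}
which is immediate from the explicit kernel representation $U(t)f(x) = (4\pi\iu t)^{-1/2}\int_{\R} e^{\iu(x-y)^2/(4t)}f(y)\,\D y$. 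Complex interpolation between these two endpoints produces the one-parameter family $\norm{U(t)f}_{L^r(\R)} \leq C\abs{t}^{-(1/2 - 1/r)}\norm{f}_{L^{r'}(\R)}$ for $r \in [2,\infty]$.

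For the homogeneous bound \eqref{eqn:homogeneous_strichartz} I would run a standard $TT^*$ argument on $T \colon v_0 \mapsto U(\cdot)v_0$. The composition $(TT^*F)(t) = \int_\R U(t-s) F(s)\,\D s$ is controlled, via the dispersive bound on $U(t-s)$ followed by the Hardy–Littlewood–Sobolev inequality in the time variable, as a map $L^{\qa(r)'}_t L^{r'}_x \to L^{\qa(r)}_t L^r_x$, using the fact that $2/\qa(r) = 1/2 - 1/r$ lies strictly in $(0,1)$ along the one-dimensional admissible range. Dualizing one factor gives \eqref{eqn:homogeneous_strichartz}.

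For the inhomogeneous estimate \eqref{eqn:inhomogeneous_strichartz} I would first derive the untruncated version, in which the integration runs over all of $\R$ rather than $[0,t]$, from the same $TT^*$ argument applied with two distinct admissible pairs on the two sides; the definition \eqref{eqn:dually_admissible} of dually admissible pairs is arranged precisely so that $(\rho',\ga(\rho)')$ is admissible and this duality closes. The passage to the retarded operator $\int_0^t U(t-s)F(s)\,\D s$ is then executed by the Christ–Kiselev lemma, whose hypothesis $\qa(r) > \ga(\rho)$ is automatic in 1D since $\qa(r) \geq 4 > 4/3 \geq \ga(\rho)$.

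Main obstacle: Because the proposition is a direct specialization of \cite[Theorem 1.2]{keel1998} to $n = 1$, there is essentially nothing to overcome. The only delicate point in the Keel–Tao proof is the excluded endpoint $(q,r) = (2,\infty)$, but this occurs only when $n = 2$; in one dimension the admissibility constraint $2/q + 1/r = 1/2$ forces $q \geq 4$, so every step above is strictly subcritical and the one-dimensional endpoint $r = \infty$, $\qa(r) = 4$ causes no difficulty. The cleanest write-up is therefore to invoke \cite{keel1998} directly.
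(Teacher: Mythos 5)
Your proposal is correct, and in substance it matches the paper, which offers no proof of Proposition \ref{prop:strichartz} at all but simply quotes \cite[Theorem 1.2]{keel1998} --- exactly the ``cleanest write-up'' you settle on in your final remark. The added value of your sketch is that it is a valid self-contained route in the non-endpoint regime, which is all that can occur here: the exponent arithmetic is right ($2/\qa(r)=\tfrac12-\tfrac1r$ matches the Hardy--Littlewood--Sobolev scaling; dual admissibility \eqref{eqn:dually_admissible} is precisely the statement that $(\rho',\ga(\rho)')$ is admissible; and $\qa(r)\geq 4>\tfrac43\geq\ga(\rho)$ makes the Christ--Kiselev hypothesis automatic, so the retarded estimate with two different pairs follows from the untruncated one, which in turn factors through $L^2$ via the homogeneous estimate and its dual). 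Two cosmetic caveats if you were to write this out: for $r=2$ the HLS exponent $\tfrac12-\tfrac1r$ equals $0$, so that case is not covered by HLS but is trivial (unitarity of $e^{\iu t\partial_x^2}$ on $L^2$ plus Minkowski's inequality), and you should state the Christ--Kiselev step for the pair of time exponents $(\ga(\rho),\qa(r))$ explicitly rather than only in the aside; with those points made, your argument is complete and somewhat more informative than the paper's bare citation, at the cost of length.
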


\begin{lem}[Gronwall, integral form]
\label{lem:gronwall_integral}
(See \cite[Theorem 1.10]{tao2006}.)
Let $A, T \geq 0$ and $u, B \in C([0, T], \R^+_0)$ be such that
\begin{equation*}
u(t) \leq A + \int_0^t B(s) u(s) \D{s} \qquad \forall t \in [0, T].
\end{equation*}
Then
\begin{equation*}
u(t) \leq A \exp\left(\int_0^t B(s) \D{s}\right) \qquad
\forall t \in [0, T].
\end{equation*}
\end{lem}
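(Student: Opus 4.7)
The plan is to convert the integral inequality into a differential one by studying the auxiliary function
\[
U(t) \coloneqq A + \int_0^t B(s) u(s) \D{s}, \qquad t \in [0, T].
\]
Since $B$ and $u$ are continuous, $U$ is $C^1$ on $[0,T]$ with $U(0) = A$ and $U'(t) = B(t) u(t)$. The hypothesis reads $u(t) \leq U(t)$, and because $B \geq 0$ we may multiply by $B(t)$ to obtain
\[
U'(t) = B(t) u(t) \leq B(t) U(t) \qquad \forall t \in [0, T].
\]

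Next I would use the standard integrating factor $E(t) \coloneqq \exp\bigl(-\int_0^t B(s)\D{s}\bigr)$, which is $C^1$ with $E'(t) = -B(t) E(t)$. A direct computation gives
\[
\frac{\D}{\D t}\bigl[U(t) E(t)\bigr] = \bigl(U'(t) - B(t) U(t)\bigr) E(t) \leq 0,
\]
so $t \mapsto U(t) E(t)$ is nonincreasing. Integrating from $0$ to $t$ yields $U(t) E(t) \leq U(0) E(0) = A$, i.e.
\[
U(t) \leq A \exp\left(\int_0^t B(s) \D{s}\right),
\]
and combining with $u(t) \leq U(t)$ finishes the case $A > 0$.

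For $A = 0$ the cleanest route is a limiting argument: for every $\varepsilon > 0$ the hypothesis also gives $u(t) \leq \varepsilon + \int_0^t B(s) u(s) \D{s}$, so by the $A > 0$ case $u(t) \leq \varepsilon \exp\bigl(\int_0^t B(s) \D{s}\bigr)$, and letting $\varepsilon \to 0^+$ gives $u \equiv 0$, which matches the right-hand side of the claimed bound.

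There is essentially no hard step here; the only thing to be mildly careful about is the $A = 0$ case (handled above) and the justification that $U \in C^1$, which is just the fundamental theorem of calculus applied to the continuous integrand $B u$. Since the statement already cites \cite[Theorem 1.10]{tao2006}, a one-line sketch along the integrating-factor lines above should suffice in the paper itself.
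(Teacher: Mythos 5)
Your argument is correct and is the standard integrating-factor proof of Gronwall's inequality; the paper itself gives no proof of this lemma and simply cites \cite[Theorem 1.10]{tao2006}, where essentially this same argument appears. One small remark: the separate limiting argument for $A = 0$ is unnecessary, since your integrating-factor computation never divides by $U$ or takes a logarithm --- the inequality $U(t)E(t) \leq U(0)E(0) = A$ holds verbatim when $A = 0$ and, together with $u \leq U$ and $u \geq 0$, already forces $u \equiv 0$ in that case.
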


\begin{lem}[Gronwall, differential form]
\label{lem:gronwall_diff}
(See \cite[Theorem 1.12]{tao2006}.)
Let $T > 0$, $u: [0, T] \to \R^+_0$ be absolutely continuous
and $B \in C([0,T], \R_0^+)$ such that
\begin{equation*}
u'(t) \leq B(t) u(t) \qquad
\text{for almost every } t \in [0, T].
\end{equation*}
Then
\begin{equation*}
u(t) \leq u(0) \exp\left(\int_0^t B(s) \D{s}\right) \qquad
\forall t \in [0, T].
\end{equation*}
\end{lem}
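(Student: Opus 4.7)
The plan is to prove the differential Gronwall inequality either by an integrating factor argument or by reducing it to the integral form already stated in Lemma~\ref{lem:gronwall_integral}. The reduction route is the shortest, so I would present that as the main argument and keep the integrating factor idea in reserve as an alternative.

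First I would observe that since $u$ is absolutely continuous on $[0,T]$, the fundamental theorem of calculus for Lebesgue integrals gives
\begin{equation*}
u(t) = u(0) + \int_0^t u'(s)\,\D s \qquad \forall t \in [0, T].
\end{equation*}
Combining this with the hypothesis $u'(s) \leq B(s) u(s)$ (valid for a.e.\ $s$) and the non-negativity of $B$ and $u$, one obtains
\begin{equation*}
u(t) \leq u(0) + \int_0^t B(s)\, u(s)\,\D s \qquad \forall t \in [0, T].
\end{equation*}
Since $B$ is continuous and non-negative and $u$ is continuous and non-negative, this is exactly the hypothesis of Lemma~\ref{lem:gronwall_integral} with constant $A = u(0) \geq 0$, whose conclusion is precisely the desired bound.

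As an alternative (useful if one wanted a self-contained argument), I would set $\Phi(t) \coloneqq \exp\!\bigl(-\int_0^t B(s)\,\D s\bigr)$ and $F(t) \coloneqq u(t)\Phi(t)$. Both factors are absolutely continuous (the exponential via the chain rule applied to the continuous function $-\int_0^\cdot B$, which is even $C^1$), hence so is their product, and the Leibniz rule yields
\begin{equation*}
F'(t) = \bigl(u'(t) - B(t) u(t)\bigr)\Phi(t) \leq 0 \qquad \text{for a.e.\ } t \in [0,T].
\end{equation*}
Integrating gives $F(t) \leq F(0) = u(0)$, and multiplying through by $1/\Phi(t) = \exp\!\bigl(\int_0^t B(s)\,\D s\bigr)$ produces the conclusion.

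There is no real obstacle here; the only subtle point worth being explicit about is that absolute continuity (rather than mere differentiability a.e.) is what allows us to recover $u$ from $u'$ by integration, so the step from the pointwise a.e.\ inequality on $u'$ to the integral inequality on $u$ is legitimate. Continuity of $B$ ensures that all integrals and exponentials in sight are unambiguously defined and that the integral form of Gronwall applies directly.
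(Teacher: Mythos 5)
The paper does not prove this lemma at all; it is quoted verbatim from the cited reference \cite[Theorem 1.12]{tao2006}, so there is no in-paper argument to compare against. Your reduction to Lemma~\ref{lem:gronwall_integral} is correct and complete: absolute continuity gives $u(t) = u(0) + \int_0^t u'(s)\,\D s$, the a.e.\ bound $u' \leq Bu$ (with $Bu$ continuous, hence integrable) yields the integral inequality with $A = u(0) \geq 0$, and the integral form of Gronwall then applies; the integrating-factor alternative is equally sound, and you correctly identify absolute continuity as the one hypothesis doing real work.
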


\begin{lem}
\label{lem:fac}
(See \cite[Equation (18)]{pattakos2018}).
Let $s \geq 0$. Then there is a constant $C = C(s) > 0$ such that for
any $v \in H^s(\R)$ and any $w \in H^{s + 1}(\T)$ one has
$v \cdot w \in H^s(\R)$ and
\begin{equation*}
\norm{vw}_{H^s(\R)} \leq C \norm{v}_{H^s(\R)} \norm{w}_{H^{s + 1}(\T)}.
\end{equation*}
\end{lem}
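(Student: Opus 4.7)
The plan is to decompose the periodic function $w$ in its Fourier series $w(x) = \sum_{n \in \Z} \hat{w}(n)\, e^{\iu n x}$, so that after taking the Fourier transform on $\R$ of the product $vw$ one formally obtains
\[
\FT(vw)(\xi) = \sum_{n \in \Z} \hat{w}(n)\, \FT(v)(\xi - n),
\]
since multiplication by the plane wave $e^{\iu n x}$ translates the Fourier transform on $\R$ by $n$. The $H^s(\R)$-norm of $vw$ is then the $L^2_\xi$-norm of $\jb{\xi}^s \FT(vw)(\xi)$.

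For $s \geq 0$, the elementary bound $\jb{\xi} \leq \jb{n} + \jb{\xi-n}$ yields $\jb{\xi}^s \leq C_s \bigl(\jb{n}^s + \jb{\xi-n}^s\bigr)$. Splitting accordingly and applying the triangle inequality in $L^2_\xi(\R)$, I would obtain
\[
\norm{vw}_{H^s(\R)} \leq C_s \sum_{n \in \Z} \jb{n}^s \abs{\hat{w}(n)} \cdot \norm{\FT(v)(\cdot - n)}_{L^2(\R)} + C_s \sum_{n \in \Z} \abs{\hat{w}(n)} \cdot \norm{\jb{\cdot - n}^s \FT(v)(\cdot - n)}_{L^2(\R)}.
\]
Since the $L^2(\R)$-norm in $\xi$ is translation invariant, one has $\norm{\FT(v)(\cdot - n)}_{L^2(\R)} = \norm{v}_{L^2(\R)}$ and $\norm{\jb{\cdot - n}^s \FT(v)(\cdot - n)}_{L^2(\R)} = \norm{v}_{H^s(\R)}$, so both terms reduce to the $\ell^1$-type sums $\sum_n \jb{n}^s \abs{\hat{w}(n)}$ and $\sum_n \abs{\hat{w}(n)}$, each multiplied by $\norm{v}_{H^s(\R)}$.

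Finally, the Cauchy--Schwarz inequality together with the convergence of $\sum_{n \in \Z} \jb{n}^{-2}$ gives
\[
\sum_{n \in \Z} \jb{n}^s \abs{\hat{w}(n)} \leq \Bigl(\sum_{n} \jb{n}^{-2}\Bigr)^{1/2} \Bigl(\sum_{n} \jb{n}^{2(s+1)} \abs{\hat{w}(n)}^2\Bigr)^{1/2} \leq C \norm{w}_{H^{s+1}(\T)},
\]
and analogously $\sum_n \abs{\hat{w}(n)} \leq C \norm{w}_{H^{s+1}(\T)}$, which closes the argument. There is no real obstacle; the crucial observation is simply that the "$+1$" derivative on $w$ is precisely what is needed to make the Cauchy--Schwarz step work via the summable weight $\jb{n}^{-2}$.
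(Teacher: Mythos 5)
Your argument is correct and is essentially the standard proof the paper has in mind (the lemma is only cited from \cite[Equation (18)]{pattakos2018} and described as straightforward): expand $w$ in its Fourier series, use that multiplication by $e^{\iu n x}$ translates $\FT v$, apply $\jb{\xi}^s \lesssim_s \jb{n}^s + \jb{\xi-n}^s$ and translation invariance of $L^2_\xi$, and close with Cauchy--Schwarz, the extra derivative on $w$ providing the summable weight $\jb{n}^{-2}$. The only point worth a word is the interchange of the sum with the Fourier transform, which is justified since $w \in H^{s+1}(\T) \subseteq H^1(\T)$ gives $\sum_{n \in \Z} \abs{\hat{w}(n)} < \infty$, so the series defining $vw$ converges absolutely in $L^2(\R)$ and the identity for $\FT(vw)$ holds rigorously.
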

The above estimate is not optimal w.r.t. the assumed regularity of $w$.
However, we do not need a stronger version and the proof is straight
forward.

\section{Proof of Theorem \ref{thm:mainthm1}}
\label{sec:lwp}
Consider first the case $\alpha \in [2, 5)$. Let us define the space
\begin{equation}
\label{eqn:contraction_space}
X \coloneqq C([0, T], L^2(\R)) \cap
L^{\qa(\alpha + 1)}([0,T], L^{\alpha + 1}(\R))
\end{equation}
equipped with the norm
\begin{equation*}
\norm{v}_X \coloneqq
\norm{v}_{L^\infty_t L^2_x} + \norm{v}_{L^{\qa(\alpha + 1)}_t L^{\alpha + 1}}
\qquad \forall v \in X,
\end{equation*}
where $T$ will be fixed later in the proof. The integral formulation of
\eqref{eqn:cauchy_modnls} reads as
\begin{equation}
\label{eqn:duhamel_modnls}
v =
e^{\iu t \partial_{x}^{2}} v_0 \pm
\iu \int_0^t e^{\iu (t - \tau) \partial_{x}^{2}}
G_\alpha(w, v) \D{\tau}
\eqqcolon \opT(v).
\end{equation}

By Banach’s fixed-point theorem, it suffices to show that there are
$R, T > 0$ such that $\opT$ is a contractive self-mapping of
\begin{equation*}
M(R, T) \coloneqq \set{v \in X \Big| \, \norm{v}_X \leq R}.
\end{equation*}
Consider first the self-mapping property. For
$r \in \set{2, \alpha + 1}$ we have
\begin{equation*}
\norm{\opT v}_{L^{\qa(r)}_t L^r_x} \leq
\norm{e^{\iu t \partial_{x}^{2}} v_0}_{L^{\qa(r)}_t L^r_x} +
\norm{\int_0^t e^{\iu (t - \tau) \partial_{x}^{2}}
G_\alpha(w, v) \D{\tau}}_{L^{\qa(r)}_t L^r_x}.
\end{equation*}
By the homogeneous Strichartz estimate
\eqref{eqn:homogeneous_strichartz}, we have
\begin{equation*}
\norm{e^{\iu t \partial_{x}^{2}} v_0}_{L^{\qa(r)}_t L^r_x} \lesssim \norm{v_0}_2
\end{equation*}
for the first summand. This suggests the choice
$R \approx \norm{v_0}_2$. For the second summand, whose norm also needs
to be comparable with $R$, we will split the integral term. We
proceed with the estimates for the contraction property of $\opT$,
because the self-mapping property follows from them by setting $v = v_1$
and $v_2 = 0$. To that end, let us define
$G_\alpha(w, v_1, v_2) \coloneqq G_\alpha(w, v_1) - G_\alpha(w, v_2)$,
set
\begin{equation}
\label{eqn:char_set}
A \coloneqq
\set{x \in \R \, | \, \abs{w} \leq
(\abs{v_1} + \abs{v_2})},
\end{equation}
and introduce
\begin{equation*}
G_{\alpha, 1}(w, v_1, v_2)
\coloneqq
\Ind_{A}
\left(\abs{w + v_1}^{\alpha - 1}
(w + v_1) -
\abs{w + v_2}^{\alpha - 1}
(w + v_2)
\right)
\end{equation*}
and
\begin{equation*}
G_{\alpha, 2}(w, v_1, v_2)
\coloneqq
\Ind_{A^c}
\left(\abs{w + v_1}^{\alpha - 1}
(w + v_1) -
\abs{w + v_2}^{\alpha - 1}
(w + v_2)
\right).
\end{equation*}
By the triangle inequality one obtains for $r \in \set{2, \alpha + 1}$
the estimate
\begin{eqnarray}
\nonumber
& &
\norm{
\int_0^t e^{\iu (t - \tau) \partial_{x}^{2}}
G_\alpha(w, v_1, v_2) \D{\tau}
}_{L^{\qa(r)}_t L^r_x} \\
\label{eqn:split}
& \leq &
\norm{
\int_0^t e^{\iu (t - \tau) \partial_{x}^{2}}
G_{\alpha, 1}(w, v_1, v_2)
\D{\tau}}_{L^{\qa(r)}_t L^r_x} \\
\nonumber
& & +
\norm{
\int_0^t e^{\iu (t - \tau) \partial_{x}^{2}}
G_{\alpha, 2}(w, v_1, v_2) \D{\tau}}_{L^{\qa(r)}_t L^r_x}.
\end{eqnarray}
We use the inhomogeneous Strichartz inequality and the size estimate
\eqref{eqn:size_estimate} to bound the first summand of
\eqref{eqn:split} by
\begin{eqnarray}
\label{eqn:strichartz}
& & \norm{
\int_0^t e^{\iu(t - \tau) \partial_x^2}
G_{\alpha, 1}(w, v_1, v_2)
\D{\tau}}_{L^{\qa(r)}_t L^r_x} \\
\nonumber
& \leq &
\norm{G_{\alpha, 1}(w, v_1, v_2)
}_{L^{\qa((\alpha + 1)')}_t L^{(\alpha + 1)'}_x} \\
\nonumber
& \lesssim &
\norm{
\Ind_A
\left(\abs{v_1}^{\alpha - 1} + \abs{v_2}^{\alpha - 1} +
\abs{w}^{\alpha - 1} \right)
\abs{v_1 - v_2}
}_{L^{\qa((\alpha + 1)')}_t L^{(\alpha + 1)'}_x}.
\end{eqnarray}
Using the definition of the set $A$ and Hölder’s inequality for the
space and time norms we arrive at the upper bound
\begin{equation*}
\norm{
(\abs{v_1}^{\alpha - 1} + \abs{v_2}^{\alpha - 1})
\abs{v_1 - v_2}
}_{L^{\qa((\alpha + 1)')}L^{(\alpha + 1)'}} \lesssim
T^{1 - \frac{\alpha - 1}{4}} R^{\alpha - 1} \norm{v_1 - v_2}_X.
\end{equation*}
For the second summand of \eqref{eqn:split} we obtain by the same
methods the bound
\begin{eqnarray*}
& & \norm{
\int_0^t e^{\iu (t - \tau) \partial_x^2}
G_{\alpha, 2}(w, v_1, v_2)
\D{\tau}}_{L^{\qa(r)}_t L^r_x} \\
& \lesssim &
\norm{\abs{w}^{\alpha - 1} \abs{v_1 - v_2}}_{L^1(L^2)} \lesssim
T \norm{v_1 - v_2}_X.
\end{eqnarray*}
Choosing $T$ small enough shows the contraction property of $\opT$ and
the proof, in the case $\alpha \in (2, 5)$, concludes.

The case $\alpha \in [1, 2]$ is treated in the same way, but instead of
setting $\rho = (\alpha + 1)'$ one chooses $\rho = \frac{2}{\alpha}$ for
the Strichartz exponent in \eqref{eqn:strichartz}. Applying Hölder’s
inequality subsequently leads to the $L^\infty_t L^2_x$-norm and hence
no intersection in \eqref{eqn:contraction_space} is required, i.e. we
indeed have unconditional uniqueness.

For the remaining critical case $\alpha = 5$, consider the complete
metric space
\begin{equation*}
M(R, T) \coloneqq
\set{v \in X \, \Big| \,
\norm{v - e^{\iu t \partial_x^2}v_0}_{L^\infty_t L^2_x} +
\norm{v}_{L^6_t L^6_x} \leq R}.
\end{equation*}
We have to show again that $\opT$ is a contractive self-mapping of
$M(R, T)$ for some $R, T > 0$. Candidates for $R$ and $T$ are determined
from the first term of \eqref{eqn:split}, corresponding to the
effective power $\abs{v}^5$, exactly as in the treatment of the usual
mass critical NLS (see e.g. \cite[Theorem 5.3]{linares2015}).
Subsequently,  the remaining terms corresponding to the effective power
$\abs{v}^1$ are treated via the Strichartz estimates as in the case
$\alpha \in (2, 5)$ enforcing a possibly smaller choice of $T$. We omit
the details.
\hfill\ensuremath{\square}

\section{Proof of Theorem \ref{thm:mainthm2}}
\label{sec:quadraticglobal}
The proof of Theorem \ref{thm:mainthm2} will be done by looking at the
mass $\iv{2} \norm{v(t)}_2^2$ of the solution. In order to make this
rigorous we have to work with solutions which are differentiable in
time. We will get time regularity from regularity in space. Hence we
replace $G_2$ in \eqref{eqn:cauchy_modnls} by its smooth version
$G^\varepsilon$. We obtain
\begin{equation}
\label{eqn:cauchy_modnls_smooth}
\left\{
\begin{IEEEeqnarraybox}[][c]{rCl}
\iu v_t (x, t) + \partial_{x}^{2} v (x,t) \pm
G^\varepsilon (w, v) & = & 0 \qquad (x, t) \in \R \times \R, \\
v(\cdot, 0) & = & v_0,
\end{IEEEeqnarraybox}
\right.
\end{equation}
where
\begin{equation}
G^\varepsilon(w, v) \coloneqq
[\abs{v + w} \ast \phi_\varepsilon ] (v + w) -
[\abs{w} \ast \phi_\varepsilon ]w.
\end{equation}

\begin{thm}[Local well-posedness of the smoothened modified NLS]
\label{thm:lwp_modnls}
Let $\varepsilon \geq 0$. Then there is a constant
$C > 0$ such that for any $v_0 \in L^2$ and any
$w \in C(\R, L_x^\infty)$ the Cauchy problem
\eqref{eqn:cauchy_modnls_smooth} has a unique solution in
$C([0,T], L^2(\R))$, provided
\begin{equation}
\label{eqn:lwp_time_smallness}
T \leq C \min \set{\norm{v_0}_2^{-\frac{4}{3}},
\norm{w}_{L_t^\infty, L_x^\infty}^{-1}}.
\end{equation}
\end{thm}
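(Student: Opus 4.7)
The plan is a standard Banach fixed-point argument for the Duhamel operator
\begin{equation*}
\opT(v) \coloneqq e^{\iu t \partial_x^2} v_0 \pm \iu \int_0^t e^{\iu(t-\tau)\partial_x^2} G^\varepsilon(w, v) \D{\tau}
\end{equation*}
on a closed ball of radius $R \approx \norm{v_0}_2$ in $Y \coloneqq C([0,T], L^2(\R))$. Unlike the proof of Theorem \ref{thm:mainthm1}, I do not introduce an auxiliary Strichartz norm: the quadratic structure at $\alpha = 2$ allows closing the estimates entirely in $Y$ by using only the target admissible pair $(r, \qa(r)) = (2, \infty)$ in \eqref{eqn:inhomogeneous_strichartz}. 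The homogeneous estimate \eqref{eqn:homogeneous_strichartz} handles the free term and dictates the choice $R \approx \norm{v_0}_2$. A key structural point is that every bound must be uniform in $\varepsilon \geq 0$, which I achieve by invoking only Young's inequality $\norm{h \ast \phi_\varepsilon}_{L^p} \leq \norm{h}_{L^p}$ (valid for all $p \in [1, \infty]$ and trivial at $\varepsilon = 0$ where $\phi_0 = \delta_0$); no smoothing property of $\phi_\varepsilon$ enters.

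For the nonlinear estimate, I decompose the difference of nonlinearities. Writing $A_i \coloneqq \abs{v_i + w} \ast \phi_\varepsilon$, I use
\begin{equation*}
G^\varepsilon(w, v_1) - G^\varepsilon(w, v_2) = A_1 (v_1 - v_2) + (A_1 - A_2)(v_2 + w),
\end{equation*}
and further split $A_1 = (\abs{v_1+w} - \abs{w}) \ast \phi_\varepsilon + \abs{w} \ast \phi_\varepsilon$. Combining the pointwise bound $\bigl|\abs{v + w} - \abs{w}\bigr| \leq \abs{v}$ with Young's inequality, the four resulting pieces regroup into $F_1 + F_2$ satisfying
\begin{equation*}
\norm{F_1(t)}_{L^1_x} \lesssim \bigl(\norm{v_1(t)}_2 + \norm{v_2(t)}_2\bigr) \norm{(v_1 - v_2)(t)}_2,
\end{equation*}
\begin{equation*}
\norm{F_2(t)}_{L^2_x} \lesssim \norm{w(t)}_{L^\infty_x} \norm{(v_1 - v_2)(t)}_2.
\end{equation*}
Here $F_1$ collects the two pieces that pair two $L^2$ factors (yielding $L^1_x$), while $F_2$ collects the two pieces each carrying a factor of $\abs{w} \ast \phi_\varepsilon$ or $w$, which are controlled in $L^\infty_x$ by $\norm{w}_{L^\infty_x}$.

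Feeding $F_1$ into \eqref{eqn:inhomogeneous_strichartz} with dually admissible source pair $(\rho, \ga(\rho)) = (1, 4/3)$, and $F_2$ with source pair $(\rho, \ga(\rho)) = (2, 1)$, followed by a Hölder step in time, yields
\begin{equation*}
\norm{F_1}_{L^{4/3}_t L^1_x} \lesssim T^{3/4}\bigl(\norm{v_1}_Y + \norm{v_2}_Y\bigr) \norm{v_1 - v_2}_Y, \qquad \norm{F_2}_{L^1_t L^2_x} \lesssim T \norm{w}_{L^\infty_t L^\infty_x} \norm{v_1 - v_2}_Y,
\end{equation*}
and therefore
\begin{equation*}
\norm{\opT v_1 - \opT v_2}_Y \leq C\bigl[T^{3/4}(\norm{v_1}_Y + \norm{v_2}_Y) + T \norm{w}_{L^\infty_t L^\infty_x}\bigr] \norm{v_1 - v_2}_Y.
\end{equation*}
Setting $v_2 \equiv 0$ in this estimate also gives the self-mapping property. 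Choosing $R \approx \norm{v_0}_2$ and requiring both $T^{3/4} R$ and $T \norm{w}_{L^\infty_t L^\infty_x}$ to be below a fixed small constant yields exactly the time condition \eqref{eqn:lwp_time_smallness}. Uniqueness in the full class $C([0,T], L^2(\R))$ then follows from the same difference estimate applied to any two putative solutions via a continuity/shrinking-$T$ argument. I do not anticipate a serious obstacle here; the only point requiring care is preserving $\varepsilon$-uniformity, which is precisely why the quadratic estimate is routed through Young's inequality.
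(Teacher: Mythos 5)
Your proposal is correct and follows essentially the same route as the paper's proof: a Banach fixed-point argument in $C([0,T],L^2(\R))$, the same splitting of the nonlinearity by effective powers (the quadratic-in-$v$ part estimated via the dual pair $(\rho,\gamma)=(1,4/3)$ giving the $T^{3/4}\norm{v_0}_2$ smallness, the $w$-carrying part via $(2,1)$ giving $T\norm{w}_{L^\infty_t L^\infty_x}$), with $\varepsilon$-uniformity secured by Young's inequality. The only cosmetic difference is that you run the difference estimate first and recover the self-mapping property by setting $v_2=0$, whereas the paper presents the self-mapping bound first and notes the contraction follows by the analogous splitting.
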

\begin{proof}
Consider the integral formulation of \eqref{eqn:cauchy_modnls_smooth},
i.e.
\begin{equation}
\label{eqn:duhamel_modnls_smooth}
v =
e^{\iu t \partial_{x}^{2}} v_0 \pm
\iu \int_0^t e^{\iu (t - \tau) \partial_{x}^{2}}
G^\varepsilon(w, v) \D{\tau}
\eqqcolon \opT^\varepsilon(v)
\end{equation}
and notice that 
\begin{equation*}
G^\varepsilon(w, v) =
\underbrace{
\left([\abs{v + w} - \abs{w}]\ast \phi_\varepsilon \right)v
}_{\eqqcolon G_1^\varepsilon(w, v)} +
\underbrace{
\left([\abs{v + w} - \abs{w}] \ast \phi_\varepsilon\right)w +
[\abs{w} \ast \phi_\varepsilon] v
}_{\eqqcolon G_2^\varepsilon(w, v)}.
\end{equation*}
By Banach’s fixed-point theorem, it suffices to show that there are
$R, T > 0$ such that $\opT^{\varepsilon}$ is a contractive self-mapping of
\begin{equation*}
M(R, T) \coloneqq
\set{v \in C([0, T], L^2(\R)) \Big| \, \norm{v} \leq R}.
\end{equation*}
Consider first the self-mapping property. We have
\begin{equation*}
\norm{\opT^\varepsilon v}_{L^\infty_t L^2_x} \leq
\norm{e^{\iu t \partial_{x}^{2}} v_0}_{L^\infty_t L^2_x} +
\norm{\int_0^t e^{\iu (t - \tau) \partial_{x}^{2}}
G^\varepsilon (w, v) \D{\tau}}_{L^\infty_t L^2_x}.
\end{equation*}
Since the operator $e^{it\partial_{x}^{2}}$ is an isometry on $L^{2}$ we have
\begin{equation*}
\norm{e^{\iu t \partial_{x}^{2}} v_0}_{L^\infty_t L^2_x} = \norm{v_0}_2
\end{equation*}
for the first summand. This suggests the choice
$R \approx \norm{v_0}_2$. For the second summand, whose norm needs to
also be comparable with $R$, we split the integral term and obtain
\begin{eqnarray*}
& & \norm{\int_0^T e^{\iu (t - \tau) \partial_{x}^{2}}
G^\varepsilon (w, v) \D{\tau}}_{L^\infty_t L^2_x} \\
& \leq &
\norm{\int_0^T e^{\iu (t - \tau) \partial_{x}^{2}}
G_1^\varepsilon (w, v) \D{\tau}}_{L^\infty_t L^2_x} +
\norm{\int_0^T e^{\iu (t - \tau) \partial_{x}^{2}}
G_2^\varepsilon (w, v) \D{\tau}}_{L^\infty_t L^2_x}.
\end{eqnarray*}
Now, both summands are treated via the inhomogeneous Strichartz estimate
as in the proof of Theorem \ref{thm:mainthm1}. More precisely, one has
\begin{eqnarray*}
\norm{\int_0^T e^{\iu (t - \tau) \partial_{x}^{2}}
G_1^\varepsilon (w, v) \D{\tau}}_{L^\infty_t L^2_x} & \lesssim &
\norm{([\abs{v + w} - \abs{w}] \ast \phi_\epsilon) v)
}_{L^\gamma(L^\rho)} \\
&\leq &
\norm{
\norm{[\abs{v + w} - \abs{w}] \ast \phi_\epsilon
}_{L_x^{2 \rho}}
\norm{v}_{L_x^{2 \rho}}}_{L^\gamma_t} \\
& \leq &
\norm{\norm{v}_{L_x^{2 \rho}}^2}_{L_t^{\gamma}} =
\norm{v}_{L^{2 \gamma} (L^{2 \rho})}^2.
\end{eqnarray*}
Above, we used the Cauchy-Schwartz inequality to arrive at the second
line and Young’s inequality (if $\varepsilon \neq 0$) and a size
estimate to pass to the last line (all in the space variable).

As we want to arrive at the norm in $C([0, T], L^2(\R))$, we put
$2 \rho = 2$, i.e. $\rho = 1$. Then, from the admissibility condition
\eqref{eqn:admissible} for $(\rho', \gamma')$, one obtains
$\gamma = \frac{4}{3}$. As $2 \gamma = \frac{8}{3} < \infty = \qa(2)$,
one can raise the time exponent to $\infty$ by Hölder’s inequality for
the time variable, i.e.
\begin{equation}
\label{eqn:G_1_estimate}
\norm{v}_{L^{2 \gamma} (L^{2 \rho})}^2 \leq
T^{\frac{3}{4}} \norm{v}_{L^\infty(L^2)}^2 \leq
T^{\frac{3}{4}} R^2 \overset{!}{\lesssim} R.
\end{equation}
This inequality holds under the condition
\begin{equation*}
T \lesssim \norm{v_0}_2^{-\frac{4}{3}},
\end{equation*}
which is satisfied by \eqref{eqn:lwp_time_smallness}.

For $G_2^\varepsilon$ we similarly obtain
\begin{eqnarray*}
& &
\norm{\int_0^T e^{\iu (t - \tau) \partial_{x}^{2}}
G_2^\varepsilon(w, v) \D{\tau}}_{L^\infty_t L^2_x} \\
& \lesssim &
\norm{([\abs{v + w} - \abs{w}] \ast \phi_\varepsilon])w
}_{L^{\tilde{\gamma}}(L^{\tilde{\rho}})} +
\norm{
[\abs{w} \ast \phi_\varepsilon] v
}_{L^{\tilde{\gamma}}(L^{\tilde{\rho}})} \\
& \leq &
\norm{w}_{L^\infty(L^\infty)}
\norm{[\abs{v + w} - \abs{w}] \ast \phi_\varepsilon]
}_{L^{\tilde{\gamma}}(L^{\tilde{\rho}})} +
\norm{[\abs{w} \ast \phi_\varepsilon]}_{L^\infty(L^\infty)}
\norm{v}_{L^{\tilde{\gamma}}(L^{\tilde{\rho}})} \\
& \lesssim &
\norm{w}_{L^\infty(L^\infty)}
\norm{v}_{L^{\tilde{\gamma}}(L^{\tilde{\rho}})},
\end{eqnarray*}
where we employed Young’s inequality and a size estimate to obtain the
last line. In contrast to the $G_1$-case, we choose $\tilde{\rho} = 2$
to arrive at the norm in $C([0, T], L^2(\R))$. Then, by the
admissibility condition \eqref{eqn:admissible},
$\tilde{\gamma} = 1 < \infty = \qa(2)$. Hence, by exploiting
again the Hölder’s inequality for the time variable, we get
\begin{eqnarray*}
\norm{w}_{L^\infty(L^\infty)}
\norm{v}_{L^{\tilde{\gamma}}(L^{\tilde{\rho}})} & = &
\norm{w}_{L^\infty(L^\infty)}
\norm{v}_{L^1(L^2)} \\
& \leq& 
\norm{w}_{L^\infty(L^\infty)}
T \norm{v}_{L^\infty(L^2)} \\
& \leq &
\norm{w}_{L^\infty(L^\infty)} R T \\
& \overset{!}{\lesssim_1} & R.
\end{eqnarray*}
From this we obtain the additional condition
\begin{equation*}
T \lesssim \norm{w}_{L^\infty(L^\infty)}^{-1},
\end{equation*}
which is also satisfied by \eqref{eqn:lwp_time_smallness}.

For the contraction property, consider the splitting
\begin{eqnarray*}
G^\varepsilon(w, v_1, v_2) & \coloneqq &
G^\varepsilon(w, v_1) - G^\varepsilon(w, v_2) \\
& = &
[\abs{v_1 + w} \ast \phi_\varepsilon](v_1 + w) -
[\abs{v_2 + w} \ast \phi_\varepsilon](v_2 + w) \\
& = &
\underbrace{
([\abs{v_1 + w} - \abs{w}] \ast \phi_\varepsilon)(v_1 - v_2) +
([\abs{v_1 + w} - \abs{v_2 + w}] \ast \phi_\varepsilon) v_2
}_{\eqqcolon G_1^\varepsilon(w, v_1, v_2)} \\
& & +
\underbrace{([\abs{v_1 + w} - \abs{v_2 + w}] \ast \phi_\varepsilon) w +
[\abs{w} \ast \phi_\varepsilon] (v_1 - v_2)
}_{\eqqcolon G_2^\varepsilon(w, v_1, v_2)}.
\end{eqnarray*}
Arguments similar to those used in the proof of the self-mapping property
shown above yield the contraction property of $\opT^\varepsilon$,
possibly requiring an even smaller implicit constant in
\eqref{eqn:lwp_time_smallness}.
\end{proof}

\begin{lem}[Convergence of the solutions for vanishing smoothing]
\label{lem:vanishing_smoothing}
Fix $v_0 \in L^2$ and $w \in C(\R, C(\T))$, and for all $\varepsilon \geq 0$ denote by
$v^\varepsilon \in C([0, T], L^2(\R))$ the
unique solution of the Cauchy problem \eqref{eqn:cauchy_modnls_smooth}
from Theorem \ref{thm:lwp_modnls}. Then,
\begin{equation*}
\norm{v^\varepsilon - v^0}_{L^\infty_t L^2_x}
\xrightarrow{\varepsilon \to 0+} 0.
\end{equation*}
\end{lem}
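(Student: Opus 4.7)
The plan is to write a Duhamel equation for the difference $v^\varepsilon - v^0$, split the nonlinearity difference $G^\varepsilon(w, v^\varepsilon) - G^0(w, v^0)$ into a Lipschitz-in-$v$ piece (to be absorbed via Strichartz on a short time interval) and a consistency-in-$\varepsilon$ piece (which vanishes by mollifier theory), and iterate on a finite partition of $[0, T]$.

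From \eqref{eqn:duhamel_modnls_smooth},
\begin{equation*}
v^\varepsilon(t) - v^0(t) = \pm \iu \int_0^t e^{\iu(t-\tau)\partial_x^2} \bigl[(G^\varepsilon(w, v^\varepsilon) - G^\varepsilon(w, v^0)) + (G^\varepsilon(w, v^0) - G^0(w, v^0))\bigr] \D\tau.
\end{equation*}
The Lipschitz piece $G^\varepsilon(w, v^\varepsilon) - G^\varepsilon(w, v^0)$ is estimated exactly as in the contraction step of Theorem \ref{thm:lwp_modnls}: applying the inhomogeneous Strichartz estimate with the same dually admissible exponents $(\rho, \gamma) = (1, 4/3)$ and $(\tilde\rho, \tilde\gamma) = (2, 1)$ yields a bound of the form $C(T^{3/4} R + T \norm{w}_{L_t^\infty L_x^\infty}) \norm{v^\varepsilon - v^0}_{L_t^\infty L_x^2}$. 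Here $R$ is a uniform-in-$\varepsilon$ bound for $\norm{v^\varepsilon}_{L_t^\infty L_x^2}$ on a short subinterval $[0, T_0]$, the uniformity following from the fact that the time of existence in Theorem \ref{thm:lwp_modnls} depends only on $\norm{v_0}_2$ and $\norm{w}_{L_t^\infty L_x^\infty}$. Choosing $T_0$ small enough makes the prefactor at most $\tfrac{1}{2}$, allowing absorption.

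For the consistency piece, use the algebraic identity
\begin{align*}
G^\varepsilon(w, v^0) - G^0(w, v^0) &= \bigl[(\abs{v^0+w}-\abs{w})\ast\phi_\varepsilon - (\abs{v^0+w}-\abs{w})\bigr](v^0+w) \\
&\quad + \bigl[\abs{w}\ast\phi_\varepsilon - \abs{w}\bigr] v^0.
\end{align*}
The reverse triangle inequality gives $\abs{\abs{v^0(t)+w(t)}-\abs{w(t)}} \leq \abs{v^0(t)}$ pointwise, so $\abs{v^0+w}-\abs{w} \in L_t^\infty L_x^2$, and approximation of the identity yields $(\abs{v^0+w}-\abs{w})\ast\phi_\varepsilon - (\abs{v^0+w}-\abs{w}) \to 0$ in $L_x^2$ pointwise in $t$, dominated by $2\norm{v^0(t)}_2$. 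Splitting the factor $v^0+w$ into its $L_x^2$ part $v^0$ and $L_x^\infty$ part $w$, the first line vanishes in $L_t^{4/3} L_x^1 + L_t^1 L_x^2$ as $\varepsilon \to 0$ by dominated convergence. For the second line, the hypothesis $w \in C(\R, C(\T))$ makes $\set{w(t) \, | \, t \in [0,T]}$ a compact, hence equicontinuous, subset of $C(\T)$, so $\norm{\abs{w(t)}\ast\phi_\varepsilon - \abs{w(t)}}_{L_x^\infty} \to 0$ uniformly in $t$; pairing with $v^0 \in L_t^\infty L_x^2$ gives an $L_t^1 L_x^2$-contribution tending to $0$. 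The inhomogeneous Strichartz estimate then converts these into an $L_t^\infty L_x^2$ bound of order $o_\varepsilon(1)$.

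Combining the two pieces gives $\norm{v^\varepsilon - v^0}_{L^\infty_{[0,T_0]} L_x^2} \leq o_\varepsilon(1) + \tfrac{1}{2} \norm{v^\varepsilon - v^0}_{L^\infty_{[0,T_0]} L_x^2}$, whence convergence on $[0, T_0]$. Iterating on $[T_0, 2T_0]$ with the (now convergent) initial difference $v^\varepsilon(T_0) - v^0(T_0) \to 0$ in $L_x^2$ propagated by the unitary free evolution, and continuing, covers $[0, T]$ in finitely many steps. The main obstacle is the consistency estimate for the $L^\infty$-part of $v^0 + w$: this is exactly where the strengthened hypothesis $w \in C(\R, C(\T))$ (rather than merely $w \in C(\R, L^\infty_x)$) is needed, via equicontinuity yielding uniform convergence of $\abs{w}\ast\phi_\varepsilon \to \abs{w}$.
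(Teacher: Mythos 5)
Your argument is correct and follows essentially the same route as the paper: the same Duhamel splitting into a Lipschitz piece (absorbed via the small-time Strichartz contraction estimate with exponents $(\rho,\gamma)=(1,\tfrac43)$ and $(\tilde\rho,\tilde\gamma)=(2,1)$, uniform in $\varepsilon$) and a consistency piece handled by the approximation-of-the-identity property, dominated convergence in time, and uniform continuity of $\abs{w}$ in $x$. The only cosmetic differences are that the paper absorbs the Lipschitz term directly through the contraction constant of $\opT^\varepsilon$ on all of $[0,T]$ (the time $T$ from Theorem \ref{thm:lwp_modnls} already satisfies the smallness condition \eqref{eqn:lwp_time_smallness}), so your subinterval iteration, while harmless, is not needed.
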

\begin{proof}
Recall, that by construction $v^\varepsilon$ and $v^0$ are fixed points
of $\opT^\varepsilon$ and $\opT^0$ respectively and hence
\begin{eqnarray*}
\norm{v^\varepsilon - v^0}_{L^\infty_t L^2_x} & \leq &
\norm{\int_0^t e^{\iu (t - \tau) \partial_{x}^{2}} \left(
G^\varepsilon(w, v^\varepsilon) -
G^0(w, v^0) \right) \D{\tau}}_{L^\infty_t L^2_x} \\
& \leq &
\norm{\int_0^t e^{\iu (t - \tau) \partial_{x}^{2}} \left(
G^\varepsilon(w, v^\varepsilon) -
G^\varepsilon(w, v^0) \right) \D{\tau}}_{L^\infty_t L^2_x} \\
& & + \norm{\int_0^t e^{\iu (t - \tau) \partial_{x}^{2}}
\left( G^\varepsilon(w, v^0) -
G^0(w, v^0) \right) \D{\tau}}_{L^\infty_t L^2_x}.
\end{eqnarray*}
Due to the fact that $\opT^\varepsilon$ is contractive, the first
summand is controlled by
\begin{equation*}
\norm{\int_0^t e^{\iu (t - \tau) \partial_{x}^{2}} \left(
G^\varepsilon(w, v^\varepsilon) -
G^\varepsilon(w, v^0) \right) \D{\tau}}_{L^\infty_t L^2_x} \leq
C \norm{v^0 - v^\varepsilon}_{L^\infty_t L^2_x},
\end{equation*}
where $C < 1$ is the contraction constant. Thus, it suffices to show
that the second summand converges to zero. To that end we first gather
terms with the same effective powers of $v^0$ and $w$, i.e.
\begin{eqnarray}
\nonumber
& & \int_0^t e^{\iu (t - \tau) \partial_{x}^{2}} \left(
G^\varepsilon(w, v^0) -
G^0(w, v^0) \right) \D{\tau}\\
\nonumber
& = &
\int_0^t e^{\iu (t - \tau) \partial_{x}^{2}} \left(
\left[\abs{w + v^0} \ast \phi_\varepsilon \right](v^0 + w) -
\left[\abs{w} \ast \phi_\varepsilon \right] w \right. \\
\nonumber
& & - \left.
\abs{w + v^0} (v^0 + w) + \abs{w} w \right) \D{\tau} \\
\label{eqn:first_summand}
& = &
\int_0^t e^{\iu (t - \tau) \partial_{x}^{2}} \left(
\left[\left(\abs{w + v^0} - \abs{w} \right) \ast \phi_\varepsilon -
\left(\abs{w + v^0} - \abs{w} \right) \right]v^0 \right) \D{\tau} \\
\label{eqn:second_summand}
& & +
\int_0^t e^{\iu (t - \tau) \partial_{x}^{2}} \left(
\left[\left(\abs{w + v^0} - \abs{w} \right) \ast \phi_\varepsilon -
\left(\abs{w + v^0} - \abs{w} \right) \right]w \right.\\
\nonumber
& & +
\left.\left(\abs{w} \ast \phi_\varepsilon - \abs{w} \right)
v^0 \right) \D{\tau}.
\end{eqnarray}
The first summand corresponding to $\abs{v^0}^2$ is treated in the same
way as the $G_1^\varepsilon$-term in the proof of Theorem
\ref{thm:lwp_modnls}, i.e. via a Strichartz estimate and Hölder’s
inequality. We arrive at
\begin{eqnarray*}
& & \norm{\int_0^t e^{\iu (t - \tau) \partial_{x}^{2}} \left(
\left[\left(\abs{w + v^0} - \abs{w} \right) \ast \phi_\varepsilon -
\left(\abs{w + v^0} - \abs{w} \right) \right]v^0 \right) \D{\tau}
}_{L^\infty_t L^2_x} \\
& \leq &
\norm{\left(\abs{w + v^0} - \abs{w} \right) \ast \phi_\varepsilon -
\left(\abs{w + v^0} - \abs{w} \right)
}_{L_t^{\frac{4}{3}}L_x^2} \cdot \norm{v^0}_{L_t^\infty L_x^2}.
\end{eqnarray*}
It suffices to show that the first factor above tends to zero, as
$\varepsilon$ tends to zero. For almost every $t \in [0, T]$ we have that
$\left(\abs{w + v^0} - \abs{w} \right) \in L^2$, which implies, due to
the fact that $(\phi_\varepsilon)_{\varepsilon > 0}$ is an approximation
to the identity, that
\begin{equation*}
\norm{\left(\abs{w + v^0} - \abs{w} \right) \ast \phi_\varepsilon -
\left(\abs{w + v^0} - \abs{w} \right)}_{L_x^2}
\xrightarrow{\varepsilon \to 0+} 0.
\end{equation*}
Furthermore, by Young’s inequality,
\begin{equation*}
\norm{\left(\abs{w + v^0} - \abs{w} \right) \ast \phi_\varepsilon -
\left(\abs{w + v^0} - \abs{w} \right)}_{L_x^2}^{\frac{4}{3}} \lesssim
\norm{v^0}_{L_x^2}^{\frac{4}{3}}
\end{equation*}
for every $\varepsilon > 0$ and almost every $t \in [0, T]$. Also,
\begin{equation*}
\int_0^T \norm{v^0(\cdot, \tau)}_{L_x^2}^{\frac{4}{3}} \D{\tau} =
\norm{v^0}_{L^{\frac{4}{3}}_t L^2_x}^{\frac{4}{3}} \lesssim_T
\norm{v^0}_{L^\infty_t L^2_x}^{\frac{4}{3}}
\end{equation*}
and hence the claim follows by the dominated convergence theorem.

The second summand (Equation \eqref{eqn:second_summand}),
corresponding to $\abs{v^0w}$, is treated like the
$G_2^\varepsilon$-term and we arrive at
\begin{eqnarray*}
& &
\norm{\int_0^t e^{\iu (t - \tau) \partial_{x}^{2}} \left[
\left(\left(\abs{w + v^0} - \abs{w} \right) \ast \phi_\varepsilon -
\left(\abs{w + v^0} - \abs{w} \right) \right) w \right] \D{\tau}
}_{L^\infty_t L^2_x} \\
& & +
\norm{\int_0^t e^{\iu (t - \tau) \partial_{x}^{2}} \left[
\left(\abs{w} \ast \phi_\varepsilon - \abs{w} \right)
v^0 \right] \D{\tau}}_{L^\infty_t L^2_x} \\
& \leq &
\norm{
\left(\abs{w + v^0} - \abs{w} \right) \ast \phi_\varepsilon -
\left(\abs{w + v^0} - \abs{w} \right)}_{L_t^1 L_x^2} \cdot
\norm{w}_{L_t^\infty L_x^\infty} \\
& & +
\norm{v^0}_{L_t^\infty L_x^2}
\norm{\abs{w} \ast \phi_\varepsilon - \abs{w}}_{L_t^1 L_x^\infty}.
\end{eqnarray*}
Observe, that $\abs{w}$ is uniformly continuous in the $x$-variable on
the whole of $\R$. Hence, as for \eqref{eqn:first_summand}, the fact
that $(\phi_\varepsilon)_{\varepsilon > 0}$ is an approximation to the
identity implies the convergence to zero of \eqref{eqn:second_summand}.
\end{proof}

\begin{lem}[Smooth solutions for smooth initial data]
\label{lem:smoothness_modnls}
(Cf. \cite[Proposition 3.11]{tao2006}.)
Let $\varepsilon > 0$, $w \in C([0, T], H^\infty(\T))$ and
$v_0 \in \LSS$ and let $v$ denote the unique solution of
\eqref{eqn:cauchy_modnls_smooth}. Then
$v \in C^1([0, T], H^\infty(\R))$ and for any $s > \iv{2}$ one has
\begin{equation}
\label{eqn:gronwall_modnls}
\norm{v}_{L_t^\infty H_x^s} \leq
C \norm{v_0}_{H^s}
\exp\left(\norm{v}_{L_t^1 L_x^\infty} +
T \norm{w}_{C(H^{s + 1}(\T))} \right)
\end{equation}
for some $C = C(\varepsilon, s) > 0$.
\end{lem}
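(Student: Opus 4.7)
The plan is to derive an a priori $H^s$-estimate on $v$ via Duhamel's formula \eqref{eqn:duhamel_modnls_smooth} and Gronwall's inequality, then to justify the estimate by a standard persistence-of-regularity bootstrap. The crucial ingredient is the following product-type bound for the smoothed nonlinearity: for $s > 1/2$,
\[
\norm{G^\varepsilon(w,v)}_{H^s(\R)} \leq C(\varepsilon, s)\bigl(\norm{v}_{L^\infty_x} + \norm{w}_{H^{s+1}(\T)}\bigr)\norm{v}_{H^s(\R)}.
\]

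To prove this estimate I would decompose $G^\varepsilon = G_1^\varepsilon + G_2^\varepsilon$ as in the proof of Theorem \ref{thm:lwp_modnls}. For $G_1^\varepsilon(w,v) = ([\abs{v+w}-\abs{w}]\ast\phi_\varepsilon)v$, apply the Moser product inequality $\norm{fg}_{H^s}\lesssim \norm{f}_{L^\infty}\norm{g}_{H^s}+\norm{f}_{H^s}\norm{g}_{L^\infty}$ (valid for $s > 1/2$) and exploit the smoothing of $\phi_\varepsilon$: the pointwise estimate $\abs{\abs{v+w}-\abs{w}}\leq\abs{v}$ and Young's inequality yield $\norm{[\abs{v+w}-\abs{w}]\ast\phi_\varepsilon}_{L^\infty}\leq \norm{v}_{L^\infty}$, while placing the derivatives on $\phi_\varepsilon$ and using the Fourier multiplier $e^{-\varepsilon\xi^2}$ gives $\norm{[\abs{v+w}-\abs{w}]\ast\phi_\varepsilon}_{H^s(\R)}\leq C(\varepsilon, s)\norm{v}_{L^2}$. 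For the two pieces of $G_2^\varepsilon$, both of which carry a $2\pi$-periodic factor ($w$ or $\abs{w}\ast\phi_\varepsilon$), apply Lemma \ref{lem:fac} to trade the extra derivative for an $H^{s+1}(\T)$-norm; the analogous Fourier-side computation on $\T$ yields $\norm{\abs{w}\ast \phi_\varepsilon}_{H^{s+1}(\T)}\leq C(\varepsilon, s)\norm{w}_{L^2(\T)}$. Combining these pieces produces the displayed estimate.

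Equipped with this bound, applying $\jb{D}^s$ to \eqref{eqn:duhamel_modnls_smooth}, taking $L^2$-norms and exploiting the unitarity of $e^{\iu t\partial_x^2}$ yields
\[
\norm{v(t)}_{H^s} \leq \norm{v_0}_{H^s} + C(\varepsilon, s)\int_0^t \bigl(\norm{v(\tau)}_{L^\infty_x} + \norm{w(\tau)}_{H^{s+1}(\T)}\bigr)\norm{v(\tau)}_{H^s}\D\tau,
\]
and Gronwall's integral form (Lemma \ref{lem:gronwall_integral}) immediately delivers \eqref{eqn:gronwall_modnls}.

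The main obstacle is that this argument is circular as stated: it assumes a priori that $v \in C([0,T], H^s(\R))$, whereas Theorem \ref{thm:lwp_modnls} only furnishes $v \in C([0,T], L^2(\R))$. I would close the loop by a standard bootstrap: using the $H^s$-estimate above, redo the contraction-mapping argument of Theorem \ref{thm:lwp_modnls} in $C([0,T'], H^s(\R))$ to obtain a local $H^s$-solution, which agrees with the $L^2$-solution by $L^2$-uniqueness; the Gronwall bound then prevents finite-time blow-up of $\norm{v}_{H^s}$, so the $H^s$-solution extends to all of $[0,T]$. Since $v_0 \in \LSS$ and $w \in C([0,T], H^\infty(\T))$, iterating over $s$ (each step requiring one additional derivative of $w$, which is available) gives $v \in C([0,T], H^\infty(\R))$. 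Reading off $v_t = \iu\partial_x^2 v \pm \iu G^\varepsilon(w,v)$ from the equation then yields $v_t \in C([0,T], H^\infty(\R))$ and the announced $C^1$-regularity in time.
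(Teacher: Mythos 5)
Your proposal is correct and takes essentially the same route as the paper: the same decomposition of $G^\varepsilon$ with the smoothing bound $\norm{[\abs{v+w}-\abs{w}]\ast\phi_\varepsilon}_{H^s(\R)}\lesssim_{\varepsilon,s}\norm{v}_{L^2}$, Lemma \ref{lem:fac} for the factors carrying $w$, the Moser-type splitting that makes the $v$-quadratic term linear in $\norm{v}_{H^s}$ (with the $\norm{v}_{L^\infty}$ prefactor), a contraction argument for local $H^s$ well-posedness, and Gronwall plus the blow-up alternative to reach $[0,T]$, iterated over $s$. The only difference is the final time-regularity step, where you simply read $v_t$ off the equation while the paper invokes a semigroup-theoretic result (\cite[Theorem 4.2.4]{pazy1992}) to upgrade the mild solution to a $C^1$-in-time solution; that justification (or an explicit differentiation of the Duhamel formula, available since $G^\varepsilon(w,v)\in C([0,T],H^\infty(\R))$) should be supplied rather than asserted.
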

\begin{proof}
We begin by showing that $v \in C([0, T], H^s(\R))$ for any
$s \in \N$. It suffices to prove that the operator
$\opT^\varepsilon$ from Theorem \ref{thm:lwp_modnls} is a self
mapping in $M(R, T') \subseteq H^s$, for a possibly smaller $T' \leq T$. 
To that end, observe that
\begin{eqnarray*}
\norm{\opT^\varepsilon v}_{H^s} & \leq &
\norm{e^{\iu t \partial_{x}^{2}} v_0}_{H^s} +
\norm{\int_0^t e^{\iu (t - \tau) \partial_{x}^{2}}
G^\varepsilon(w, v) \D{\tau}}_{H^s} \\
& \leq &
\norm{v_0}_{H^s} +
\int_0^t \norm{G^\varepsilon(w, v)}_{H^s} \D{\tau}.
\end{eqnarray*}
The first summand fixes $R \approx \norm{v_0}_{H^s}$. For the integrand
in the second summand we have (the variable $\tau$ is omitted in the
notation)
\begin{eqnarray}
\label{eqn:inhomogeneity}
& & \norm{G^\varepsilon(w, v)}_{H^s} \\
\nonumber
& \leq &
\underbrace{\norm{\left(\left[\abs{w + v} - \abs{w} \right] \ast
\phi_\varepsilon \right) v}_{H^s}}_{\eqqcolon I} +
\underbrace{\norm{\left( \abs{w} \ast \phi_\varepsilon \right) v}_{H^s}
}_{\eqqcolon II} +
\underbrace{\norm{\left(\left[\abs{w + v} - \abs{w} \right] \ast
\phi_\varepsilon \right) w}_{H^s}}_{\eqqcolon III}.
\end{eqnarray}
As $H^s(\R)$ is an algebra with respect to point-wise
multiplication, the first summand is estimated against
\begin{equation*}
\norm{\left(\left[\abs{w + v} - \abs{w} \right] \ast
\phi_\varepsilon \right) v}_{H^s} \lesssim
\norm{\left[\abs{w + v} - \abs{w} \right] \ast \phi_\varepsilon}_{H^s}
\norm{v}_{H^s}.
\end{equation*}
The first product above is further estimated via the definition of the
$H^s$-norm as
\begin{equation}
\label{eqn:smoothened_sobolev}
\norm{\left[\abs{w + v} - \abs{w} \right] \ast \phi_\varepsilon}_{H^s}
\lesssim
\norm{\jb{\cdot}^s \FT \phi_\varepsilon}_{L^\infty}
\norm{\abs{w + v} - \abs{w}}_2.
\end{equation}
Further estimating $\norm{v}_2 \leq \norm{v}_{H^s} \leq R$ and recalling
the integral concludes the discussion of this term. The second summand (II)
is treated via Lemma
\ref{lem:fac}:
\begin{equation*}
\norm{\left( \abs{w} \ast \phi_\varepsilon \right) v}_{H^s} \lesssim_s
\norm{\abs{w} \ast \phi_\varepsilon}_{H^{s + 1}(\T)} \norm{v}_{H^s}.
\end{equation*}
We again estimate $\norm{v}_{H^s} \leq R$ and observe for the other
factor that
\begin{eqnarray*}
\norm{\abs{w} \ast \phi_\varepsilon}_{H^{s + 1}(\T)} & \approx &
\sum_{\abs{\alpha} \leq \lceil s + 1 \rceil}
\norm{\abs{w} \ast \left[D^\alpha \phi_\varepsilon \right]}_{L^2(\T)} \\
& \leq &
\norm{w}_{\infty}
\sum_{\abs{\alpha} \leq \lceil s + 1 \rceil}
\norm{D^\alpha \phi_\varepsilon}_{L^1(\R)} \\
& \lesssim_{\varepsilon, s} &
\norm{w}_{H^{s + 1}(\T)}.
\end{eqnarray*}
The last summand (III) is estimated via
\begin{equation*}
\norm{\left(\left[\abs{w + v} - \abs{w} \right] \ast
\phi_\varepsilon \right) w}_{H^s} \lesssim_{\varepsilon, s}
\norm{v}_{H^s} \norm{w}_{H^{s + 1}(\T)}.
\end{equation*}
The proof of the above requires no new techniques and is omitted. All in
all this shows the local well-posedness of
\eqref{eqn:cauchy_modnls_smooth} in
$C([0, T'], H^s)$, where the guaranteed time of existence is
\begin{equation*}
T' \approx_{\varepsilon, s}
\set{\norm{w}_{H^{s + 1}(\T)}^{-1}, \norm{v_0}_{H^s(\R)}^{-1}}.
\end{equation*}
To prove the estimate \eqref{eqn:gronwall_modnls}, we will employ
Lemma \ref{lem:gronwall_integral} (Gronwall’s inequality). To that end,
let $T'$ be now the maximal time of existence of the solution
$v \in C([0, T'), H^s)$. Observe that
\begin{equation*}
\norm{v(\cdot, t)}_{H^s} =
\norm{(\opT^\varepsilon v)(\cdot, t)}_{H^s} \leq
\norm{v_0}_{H^s} +
\int_0^t \norm{G^\varepsilon(w, v)(\cdot, \tau)}_{H^s} \D{\tau}
\qquad \forall t \in [0, T').
\end{equation*}
The integrand above is estimated as in inequality
\eqref{eqn:inhomogeneity}. The first term (I), however, needs retreatment,
as it is quadratic in $\norm{v}_{H^s}$. The algebra property of
$H^s(\R) \cap L^\infty(\R)$ implies
\begin{equation*}
I \leq
\norm{\left(\left[\abs{w + v} - \abs{w} \right] \ast
\phi_\varepsilon \right)}_{H^s} \norm{v}_\infty +
\norm{\left(\left[\abs{w + v} - \abs{w} \right] \ast
\phi_\varepsilon \right)}_\infty \norm{v}_{H^s}.
\end{equation*}
We estimate the first factor in the first summand by
\eqref{eqn:smoothened_sobolev}. For the first factor of the second
summand we have
\begin{equation*}
\norm{\left(\left[\abs{w + v} - \abs{w} \right] \ast
\phi_\varepsilon \right)}_\infty \leq
\norm{\left[\abs{w + v} - \abs{w} \right]}_\infty
\norm{\phi_\varepsilon}_1 \leq \norm{v}_\infty
\end{equation*}
by Young’s inequality. Reinserting the estimates for the terms $(II)$
and $(III)$ yields
\begin{equation*}
\norm{v(\cdot, t)}_{H^s} \lesssim_{s, \varepsilon}
\norm{v_0}_{H^s} +
\int_0^t \left(\norm{v(\cdot, \tau)}_{\infty} +
\norm{w(\cdot, \tau)}_{H^{s + 1}(\T)} \right)
\norm{v(\cdot, \tau)}_{H^s} \D{\tau}.
\end{equation*}
Gronwall’s inequality now implies
\begin{eqnarray*}
\norm{v(\cdot, t)}_{H^s} & \lesssim_{\varepsilon, s} &
\norm{v_0}_{H^s}
\exp \left(\int_0^t \left(\norm{v(\cdot, \tau)}_{\infty} +
\norm{w(\cdot, \tau)}_{H^{s + 1}(\T)} \right) \D{\tau} \right) \\
& \leq &
\norm{v_0}_{H^s} \exp\left(\norm{v}_{L_t^1 L_x^\infty} +
T' \norm{w}_{C(H^{s + 1}(\T))} \right) \qquad \forall t \in [0, T').
\end{eqnarray*}
Thus we see that a blowup cannot occur for any $T' < T$ and so $T' = T$.

This indeed shows that $v \in C([0, T], H^s)$. As $v_0 \in \LSS$ and
$w \in C([0, T], H^\infty(\T))$ are smooth, a classical result from semi-group theory (see \cite[Theorem 4.2.4]{pazy1992}) implies that
$v \in C^1([0, T], H^s)$. Since $s > \iv{2}$ was arbitrary, the proof is
complete.
\end{proof}

\begin{prop}
\label{prop:norm_estimate}
The unique solution $v$ of \eqref{eqn:cauchy_modnls_smooth} from Theorem
\ref{thm:lwp_modnls} satisfies
\begin{equation}
\label{eqn:norm_estimate}
\norm{v(\cdot, t)}_2 \leq
\norm{v_0}_2 \exp \left[\norm{w}_{L^\infty_t L^\infty_x} t \right]
\qquad \forall t \in [0, T].
\end{equation}
\end{prop}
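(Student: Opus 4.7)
The plan is to differentiate the squared $L^2$-mass of $v$ in time and exploit a cancellation inherent in the quadratic structure of $G^\varepsilon$. First I would restrict to $\varepsilon > 0$, $v_0 \in \LSS$ and $w \in C([0,T], H^\infty(\T))$, so that Lemma~\ref{lem:smoothness_modnls} yields $v \in C^1([0,T], H^\infty(\R))$ and, in particular, $t \mapsto \norm{v(\cdot,t)}_2^2$ is classically differentiable. Using the equation,
\begin{equation*}
\tfrac{1}{2}\tfrac{d}{dt}\norm{v(\cdot,t)}_2^2 = \Re \int_\R \bigl(\iu\,\partial_x^2 v \pm \iu\, G^\varepsilon(w,v)\bigr)\bar v\,\D x;
\end{equation*}
integration by parts annihilates the dispersive contribution (it is purely imaginary), leaving only $\mp\operatorname{Im}\int_\R G^\varepsilon(w,v)\bar v\,\D x$.

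The key step is to split
\begin{equation*}
G^\varepsilon(w,v) = \bigl[\abs{v+w}\ast\phi_\varepsilon\bigr]\,v + \bigl[(\abs{v+w}-\abs{w})\ast\phi_\varepsilon\bigr]\,w,
\end{equation*}
obtained by adding and subtracting $\bigl[\abs{v+w}\ast\phi_\varepsilon\bigr]\,w$. Multiplied by $\bar v$, the first summand becomes $\bigl[\abs{v+w}\ast\phi_\varepsilon\bigr]\abs{v}^2$, which is \emph{real} and therefore contributes nothing to the imaginary part; this cancellation of the potentially quadratic-in-$v$ term is precisely what makes the $\alpha = 2$ case tractable. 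For the remaining cross term, the pointwise estimate $\bigl|\abs{v+w}-\abs{w}\bigr| \leq \abs{v}$ together with Young's inequality (using $\norm{\phi_\varepsilon}_1 = 1$) yields
\begin{equation*}
\Bigl|\operatorname{Im}\int_\R G^\varepsilon(w,v)\bar v\,\D x\Bigr| \leq \norm{w(\cdot,t)}_\infty \int_\R [\abs{v}\ast\phi_\varepsilon]\,\abs{v}\,\D x \leq \norm{w(\cdot,t)}_\infty\,\norm{v(\cdot,t)}_2^2.
\end{equation*}
Thus $u(t) \coloneqq \norm{v(\cdot,t)}_2^2$ obeys $u'(t) \leq 2\norm{w(\cdot,t)}_\infty\,u(t)$, and Lemma~\ref{lem:gronwall_diff} delivers \eqref{eqn:norm_estimate} after taking square roots.

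To remove the smoothness hypotheses, I would approximate $v_0 \in L^2$ by $v_0^n \in \LSS$ and, if $w$ is not already smooth, replace it by $w^n \coloneqq w\ast\phi_{1/n}$ (convolution in $x$), which lies in $C([0,T], H^\infty(\T))$ and satisfies $\norm{w^n}_{L^\infty_t L^\infty_x} \leq \norm{w}_{L^\infty_t L^\infty_x}$. The contraction argument underlying Theorem~\ref{thm:lwp_modnls} furnishes Lipschitz dependence of the solution on both data on a common time window whose length is controlled uniformly by the data bounds in \eqref{eqn:lwp_time_smallness}, so $v^n \to v$ in $C([0,T'], L^2(\R))$ for some $T' > 0$, and the bound \eqref{eqn:norm_estimate} passes to the limit; since that bound itself precludes $L^2$-blowup, the argument can be iterated up to the full time $T$.

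The main obstacle I anticipate is this last density step: the $C^1$-in-time regularity required to differentiate the mass is genuinely unavailable at the $L^2$ level, so all information must be transported via Banach-space approximations, and one must verify that nothing is lost under the passage to the limit (in particular that $\norm{w^n}_{L^\infty_t L^\infty_x}$ does not blow up). The algebraic cancellation of the second paragraph is the structural heart of the proposition; the remainder is bookkeeping ensuring the regularity needed to carry it out.
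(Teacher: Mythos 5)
Your core computation is exactly the paper's: differentiate $\tfrac12\norm{v}_2^2$, observe that the term $[\abs{v+w}\ast\phi_\varepsilon]\abs{v}^2$ is real and drops out of the imaginary part, bound the cross term by $\norm{w}_{L^\infty_t L^\infty_x}\norm{v}_2^2$ via $\bigl|\abs{v+w}-\abs{w}\bigr|\le\abs{v}$ and Young, and apply the differential Gronwall lemma; this is precisely \eqref{eqn:diff_ineq} in the paper. The genuine gap is in your removal of the smoothness hypotheses. You assert that the contraction argument behind Theorem \ref{thm:lwp_modnls} gives Lipschitz dependence of the solution on \emph{both} $v_0$ and $w$, so that $v^n\to v$ in $C([0,T'],L^2(\R))$. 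Lipschitz dependence on $v_0$ is indeed free, but dependence on $w$ is not: in the difference $G^\varepsilon(w,v)-G^\varepsilon(w^n,v)$ one meets the term $\bigl(\bigl[\abs{v+w}-\abs{w}-\abs{v+w^n}+\abs{w^n}\bigr]\ast\phi_\varepsilon\bigr)w^n$, where the bracket is only controlled pointwise by $2\min\{\abs{w-w^n},\abs{v}\}$ and is multiplied by the non-decaying periodic function $w^n$. With $v$ merely in $L^\infty_t L^2_x$ there is no bound of this term in any dual Strichartz norm of the form $C\norm{w-w^n}_{L^\infty_t L^\infty_x}\cdot(\text{finite quantity})$: using the $\abs{w-w^n}$ branch of the minimum costs an infinite spatial norm (periodic functions are not in $L^p(\R)$ for finite $p$), and using the $\abs{v}$ branch gives no smallness in $n$. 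This is why the paper does \emph{not} argue by Lipschitz continuity in $w$; the bulk of its proof of the proposition is a splitting of $G^\varepsilon(w,v^\varepsilon)-G^\varepsilon(w^n,v^{\varepsilon,n})$ into effective powers and a non-quantitative convergence argument, with the problematic term handled by two applications of dominated convergence based on the domination $2\min\{\abs{w-w^n},\abs{v^\varepsilon}\}\le 2\abs{v^\varepsilon}\in L^2$. Your proposal as written skips exactly this step, and it cannot be repaired by simply quoting the fixed-point theorem.

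A second, smaller omission: the proposition concerns the solution from Theorem \ref{thm:lwp_modnls} for every $\varepsilon\ge 0$, and it is the case $\varepsilon=0$ that is actually used afterwards to globalize \eqref{eqn:cauchy_modnls}. You restrict to $\varepsilon>0$ at the outset (as you must, to invoke Lemma \ref{lem:smoothness_modnls}) but never return to $\varepsilon=0$; the paper closes this by letting $\varepsilon\to 0+$ and invoking Lemma \ref{lem:vanishing_smoothing}, which gives $\norm{v^\varepsilon-v^0}_{L^\infty_t L^2_x}\to 0$ so that \eqref{eqn:norm_estimate} passes to the limit. Your choice $w^n=w\ast\phi_{1/n}$ and the iteration of the bound up to time $T$ using the uniform window \eqref{eqn:lwp_time_smallness} are fine; the missing content is the convergence $v^{\varepsilon,n}\to v^\varepsilon$ itself and the final $\varepsilon\to 0$ limit.
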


\begin{proof}
Let $w^{n} \in C([0 ,T], H^\infty(\T))$ be functions with the property
\begin{equation*}
\norm{w^n - w}_{C([0,T],H^1(\T))} \xrightarrow{n \to \infty} 0
\end{equation*}
and let
$v_n \xrightarrow{n \to \infty} v_0$ in the $L^2$-norm where
$v_n \in \LSS$ for all $n \in \N$. Moreover, let
$v^{\varepsilon, n} \in C^1([0, T], L^2))$ be the solution of
\eqref{eqn:cauchy_modnls_smooth} with initial data $v_n$ and
nonlinearity $G^{\varepsilon}(w^n, v^{\varepsilon, n})$ (the smoothness of
$v^{\varepsilon, n}$ follows from Lemma \ref{lem:smoothness_modnls}).
We have
\begin{eqnarray}
\nonumber
\iv{2} \frac{\D}{\D{t}} \norm{v^{\varepsilon, n}(\cdot, t)}_2^2 & = &
\Re \dup{\dot{v}^{\varepsilon, n}(\cdot, t)}
{v^{\varepsilon, n}(\cdot, t)} =
\Re \dup{
\iu \partial_{x}^{2} v^{\varepsilon, n} \pm
\iu G^\varepsilon (w^n, v^{\varepsilon, n})}
{v^{\varepsilon, n}} \\
\nonumber
& = &
\underbrace{-\Re \iu \dup{\nabla v^{\varepsilon, n}}
{\nabla v^{\varepsilon, n}}}_{= 0} \\
\nonumber
& & \pm
\Re \iu \dup{(\abs{v^{\varepsilon, n} + w^n} \ast \phi_\varepsilon)
(v^{\varepsilon, n} + w^n) -
(\abs{w^n} \ast \phi_\varepsilon) w^n}{v^{\varepsilon, n}} \\
\nonumber
& = &
\pm \underbrace{\Re
\iu \dup{(\abs{v^{\varepsilon, n} + w^n} \ast \phi_\varepsilon)
v^{\varepsilon, n}}{v^{\varepsilon, n}}}_{= 0} \\
\label{eqn:modsoln_bound}
& & \pm \Re \iu
\dup{
([\abs{v^{\varepsilon, n} + w^n} - \abs{w^n}]
\ast \phi_\varepsilon) w^n}
{v^{\varepsilon, n}}
\end{eqnarray}
and hence
\begin{eqnarray}
\nonumber
\iv{2} \frac{\D}{\D{t}} \norm{v^{\varepsilon, n}(\cdot, t)}_2^2 & \leq &
\abs{\dup{[\abs{v^{\varepsilon, n} + w^n} - \abs{w^n}]
\ast \phi_\varepsilon) w^n}
{v^{\varepsilon, n}}} \\
\nonumber
& \leq &
\norm{[\abs{v^{\varepsilon, n} + w^n} - \abs{w^n}]
\ast \phi_\varepsilon) w^n}_{L^2_x} \norm{v^{\varepsilon, n}}_{L^2_x} \\
\label{eqn:diff_ineq}
& \leq &
\norm{w^n}_{L^\infty_t L^\infty_x} \norm{v^{\varepsilon, n}}_{L^2_x}^2
\end{eqnarray}
for all $t \in [0, T]$. Above, we obtained the first estimate by the
Cauchy-Schwarz inequa\-lity and the second one by Hölder’s inequality,
Young’s inequality and the size estimate. By the
differential form of the Gronwall’s inequality from Lemma
\ref{lem:gronwall_diff}, we obtain
\begin{equation*}
\norm{v^{\varepsilon, n}(\cdot, t)}_2 \leq
\norm{v_n}_2 \exp \left[\norm{w^n}_{L^\infty_t L^\infty_x} t \right]
\qquad \forall t \in [0, T].
\end{equation*}
In the limit $n \to \infty$, the right-hand side above converges to
the right-hand side of \eqref{eqn:norm_estimate}. It remains to show
\begin{equation}
\label{eqn:convergence_ivs}
\norm{v^{\varepsilon, n} - v^{\varepsilon}}_{L^\infty L^2}
\xrightarrow{n \to \infty} 0,
\end{equation}
because then the left-hand side converges to $\norm{v^\varepsilon}_{L^\infty_t L^2_x}$
in the limit $n \to \infty$. Finally, Lemma
\ref{lem:vanishing_smoothing} yields
\begin{equation*}
\norm{v^\varepsilon}_{L^\infty_t L^2_x} \xrightarrow {\varepsilon \to 0}
\norm{v^0}_{L^\infty_t L^2_x}.
\end{equation*}

To prove \eqref{eqn:convergence_ivs}, observe that the linear evolution
poses no problems and hence it suffices to control the integral term
\begin{equation*}
\norm{\int_0^t e^{\iu (t - \tau) \partial_{x}^{2}} \left[
G^\varepsilon(w, v^{\varepsilon}) -
G^\varepsilon(w^n, v^{\varepsilon, n}) \right]
\D{\tau}}_{L^\infty L^2}.
\end{equation*}
To that end, we will split the difference of the nonlinear terms
according to their effective power up to one exception. We begin by
observing that
\begin{eqnarray*}
& & G^\varepsilon(w, v^{\varepsilon}) -
G^\varepsilon(w^n, v^{\varepsilon, n}) \\
& = &
(\abs{w + v^\varepsilon} \ast \phi_\varepsilon)
v^\varepsilon -
(\abs{w^n + v^{\varepsilon, n}} \ast \phi_\varepsilon)
v^{\varepsilon, n} \\
& & +
([\abs{w + v^\varepsilon} - \abs{w}] \ast \phi_\varepsilon) w -
([\abs{w^n + v^{\varepsilon,n}} - \abs{w^n}] \ast \phi_\varepsilon) w^n
\end{eqnarray*}
and gather the first and the second summand, as well as the third and
the last summand. In the first sum we have
\begin{eqnarray*}
& & (\abs{w + v^{\varepsilon}} \ast \phi_\varepsilon) v^\varepsilon -
(\abs{w^n + v^{\varepsilon, n}} \ast \phi_\varepsilon)
v^{\varepsilon, n} \\
& = &
\underbrace{
(\abs{w + v^\varepsilon} \ast \phi_\varepsilon) v^\varepsilon -
(\abs{w + v^\varepsilon}  \ast \phi_\varepsilon)
v^{\varepsilon, n}}_{\eqqcolon I} \\
& & +
\underbrace{
(\abs{w + v^\varepsilon} \ast \phi_\varepsilon) v^{\varepsilon, n} -
(\abs{w^n + v^{\varepsilon, n}} \ast \phi_\varepsilon)
v^{\varepsilon, n}}_{\eqqcolon II},
\end{eqnarray*}
whereas for the second sum
\begin{eqnarray*}
& & ([\abs{w + v^\varepsilon} - \abs{w}] \ast \phi_\varepsilon)w -
([\abs{w^n + v^{\varepsilon, n}} - \abs{w^n}] \ast \phi_\varepsilon) w^n
\\
& = &
\underbrace{
([\abs{w^n + v^\varepsilon} - \abs{w^n}] \ast \phi_\varepsilon) w^n -
([\abs{w^n + v^{\varepsilon, n}} - \abs{w^n}] \ast \phi_\varepsilon) w^n
}_{\eqqcolon III} \\
& & +
\underbrace{
([\abs{w + v^\varepsilon} - \abs{w}] \ast \phi_\varepsilon) w -
([\abs{w^n + v^\varepsilon} - \abs{w^n}] \ast \phi_\varepsilon)
w^n}_{\eqqcolon IV}
\end{eqnarray*}
holds. We now complete the splitting of
$G^\varepsilon(w, v^{\varepsilon, n}) -
G^\varepsilon(w^n, v^{\varepsilon, n})$ into terms of the same effective
powers. We have
\begin{eqnarray*}
I & = & (\abs{w + v^\varepsilon} \ast \phi_\varepsilon)
(v^\varepsilon - v^{\varepsilon, n}) \\
& = &
([\abs{w + v^\varepsilon} - \abs{w}] \ast \phi_\varepsilon)
(v^\varepsilon - v^{\varepsilon, n}) +
(\abs{w} \ast \phi_\varepsilon)(v^\varepsilon - v^{\varepsilon, n}), \\
II & = &
([\abs{w + v^\varepsilon} -
\abs{w^n + v^{\varepsilon, n}}] \ast \phi_\varepsilon)
v^{\varepsilon, n} \\
& = &
([\abs{w + v^\varepsilon} -
\abs{w + v^{\varepsilon, n}}] \ast \phi_\varepsilon)
v^{\varepsilon,n} +
([\abs{w + v^{\varepsilon, n}} -
\abs{w^n + v^{\varepsilon, n}}]  \ast \phi_\varepsilon)
v^{\varepsilon, n}, \\
III & = &
([\abs{w^n + v^\varepsilon} - \abs{w^n + v^{\varepsilon, n}}] \ast
\phi_\varepsilon) w^n \text{ and} \\
IV & = &
([\abs{w + v^\varepsilon} - \abs{w}] \ast \phi_\varepsilon) w -
([\abs{w + v^\varepsilon} - \abs{w}] \ast \phi_\varepsilon) w^n \\
& & -
([\abs{w^n + v^\varepsilon} - \abs{w^n}] \ast \phi_\varepsilon) w^n +
([\abs{w + v^\varepsilon} - \abs{w}] \ast \phi_\varepsilon) w^n \\
& = &
([\abs{w + v^\varepsilon} - \abs{w}] \ast \phi_\varepsilon)(w - w^n) \\
& & +
([\abs{w + v^\varepsilon} - \abs{w} -
\abs{w^n + v^\varepsilon} + \abs{w^n}] \ast \phi_\varepsilon) w^n,
\end{eqnarray*}
from which the effective powers are obvious, and put
\begin{eqnarray*}
\tilde{G}_1^\varepsilon(w, w^n, v^\varepsilon, v^{\varepsilon, n})
& \coloneqq &
([\abs{w + v^\varepsilon} - \abs{w}] \ast \phi_\varepsilon)
(v^\varepsilon - v^{\varepsilon, n}) \\
& & +
([\abs{w + v^\varepsilon} -
\abs{w + v^{\varepsilon, n}}] \ast \phi_\varepsilon)
v^{\varepsilon, n}, \\
\tilde{G}_2^\varepsilon(w, w^n, v^\varepsilon, v^{\varepsilon, n})
& \coloneqq &
(\abs{w} \ast \phi_\varepsilon)(v^\varepsilon - v^{\varepsilon, n}) +
([\abs{w^n + v^\varepsilon} - \abs{w^n + v^{\varepsilon, n}}] \ast
\phi_\varepsilon) w^n
\\
& &
+ ([\abs{w + v^{\varepsilon, n}} - \abs{w^n + v^{\varepsilon, n}}] \ast
\phi_\varepsilon) v^{\varepsilon, n} \\
& & +
([\abs{w + v^\varepsilon} - \abs{w}] \ast \phi_\varepsilon)(w - w^n) \\
& & +
([\abs{w + v^\varepsilon} - \abs{w} -
\abs{w^n + v^\varepsilon} + \abs{w^n}] \ast \phi_\varepsilon) w^n.
\end{eqnarray*}
Now, by the triangle inequality and the inhomogeneous Strichartz
estimate, one has
\begin{eqnarray*}
& & \norm{\int_0^t e^{\iu (t - \tau) \partial_{x}^{2}} \left[
G^\varepsilon(w, v^{\varepsilon, n}) -
G^\varepsilon(w^n, v^{\varepsilon, n}) \right]
\D{\tau}}_{L^\infty L^2} \\
& \leq &
\norm{\int_0^t e^{\iu (t - \tau) \partial_{x}^{2}}
\tilde{G}_1^\varepsilon(w, w^n, v^\varepsilon, v^{\varepsilon, n})
\D{\tau}}_{L^\infty L^2} \\
& & +
\norm{\int_0^t e^{\iu (t - \tau) \partial_{x}^{2}}
\tilde{G}_2^\varepsilon(w, w^n, v^\varepsilon, v^{\varepsilon, n})
\D{\tau}}_{L^\infty L^2} \\
& \lesssim &
\norm{
\tilde{G}_1^\varepsilon(w, w^n, v^\varepsilon, v^{\varepsilon, n})
}_{L^\frac{4}{3}_t L^1_x} + 
\norm{
\tilde{G}_2^\varepsilon(w, w^n, v^\varepsilon, v^{\varepsilon, n})
}_{L^1_t L^2_x}.
\end{eqnarray*}
We begin by estimating the first summand above. In fact, we have
\begin{eqnarray*}
& & \norm{
([\abs{w + v^\varepsilon} - \abs{w}] \ast \phi_\varepsilon)
(v^\varepsilon - v^{\varepsilon, n})
}_{L^\frac{4}{3}_t L^1_x} \\
& \leq &
\norm{t \mapsto \norm{
[\abs{w + v^\varepsilon} - \abs{w}] \ast \phi_\varepsilon}_{L^2_x}
\norm{v^\varepsilon - v^{\varepsilon, n}}_{L^2_x}
}_{\frac{4}{3}} \\
& \leq &
\norm{t \mapsto \norm{v^\varepsilon}_{L^2_x}
\norm{v^\varepsilon - v^{\varepsilon, n}}_{L^2_x}
}_{\frac{4}{3}} \\
& \leq &
T^{\frac{3}{4}} \norm{v^\varepsilon}_{L^\infty_t L^2_x}
\norm{v^\varepsilon - v^{\varepsilon, n}}_{L^\infty_t L^2_x}.
\end{eqnarray*}
by the Cauchy-Schwarz, Young’s and the inverse triangle inequalities
for the space variable and Hölder’s inequality for the time variable.
Choosing $T$ sufficiently small shows that
\begin{equation*}
\norm{
([\abs{w + v^\varepsilon} - \abs{w}] \ast \phi_\varepsilon)
(v^\varepsilon - v^{\varepsilon, n})
}_{L^\frac{4}{3}_t L^1_x}
\leq
\iv{5} \norm{v^\varepsilon - v^{\varepsilon, n}}_{L^\infty_t L^2_x}.
\end{equation*}
For the second term in the definition of $\tilde{G}_1^\varepsilon$ the
same techniques are applied which yield the bound
\begin{equation*}
\norm{
([\abs{w + v^\varepsilon} -
\abs{w + v^{\varepsilon, n}}] \ast \phi_\varepsilon)
v^{\varepsilon,n}}_{L^\frac{4}{3}_t L^1_x} \leq
T^{\frac{3}{4}} \norm{v^{\varepsilon, n}}_{L^\infty_t L^2_x}
\norm{v^\varepsilon - v^{\varepsilon, n}}_{L^\infty_t L^2_x}.
\end{equation*}
By the proof of Theorem \ref{thm:lwp_modnls}, one has
\begin{equation}
\label{eqn:Linfty_L2_uniform_bound}
\norm{v^{\varepsilon, n}}_{L^\infty_t L^2_x} \lesssim
\norm{v_n}_2 \approx \norm{v_0}_2
\end{equation}
and thus choosing $T$ sufficiently
small again implies
\begin{equation*}
\norm{
([\abs{w + v^\varepsilon} -
\abs{w + v^{\varepsilon, n}}] \ast \phi_\varepsilon)
v^{\varepsilon,n}}_{L^\frac{4}{3}_t L^1_x} \leq
\iv{5} \norm{v^\varepsilon - v^{\varepsilon, n}}_{L^\infty_t L^2_x}.
\end{equation*}
The first term in the definition of $\tilde{G}_2^\varepsilon$ is treated
similarly to the above. The same is true for the second term, where we
additionally observe that
\begin{equation}
\label{eqn:Linfty_H1_uniform_bound}
\sup_{n \in \N} \norm{w^n}_{C([0, T], H^1(\T))} < \infty.
\end{equation}
For the third term, we have
\begin{eqnarray*}
& &
\norm{([\abs{w + v^{\varepsilon, n}} - \abs{w^n + v^{\varepsilon, n}}]
\ast \phi_\varepsilon) v^{\varepsilon, n}}_{L^1_t L^2_x} \\
& \leq &
\norm{[\abs{w + v^{\varepsilon, n}} - \abs{w^n + v^{\varepsilon, n}}] \ast
\phi_\varepsilon}_{L^\infty_t L^\infty_x}
\norm{v^{\varepsilon, n}}_{L^\infty_t L^2_x} \\
& \leq &
\norm{w - w^n}_{L^\infty_t L^\infty_x}
\norm{v^{\varepsilon, n}}_{L^\infty_t L^2_x} \\
& \lesssim &
\norm{w - w^n}_{L^\infty_t H^1_x(\T)} \xrightarrow{n \to \infty} 0,
\end{eqnarray*}
where the Cauchy-Schwarz inequality was used for the first estimate,
the embedding $L^\infty_t \hookrightarrow L^1_t$, Young’s inequality and
the inverse triangle inequality for the second estimate and the
embedding $C([0,T], H^1(\T)) \hookrightarrow L^\infty_t L^\infty_x$
together with \eqref{eqn:Linfty_L2_uniform_bound} for the last
estimate. By the same techniques, one obtains the convergence of the
fourth term to zero.

Finally, for the last term in the definition of
$\tilde{G}_2^\varepsilon$, one has
\begin{eqnarray*}
& & \norm{([\abs{w + v^\varepsilon} - \abs{w} -
\abs{w^n + v^\varepsilon} + \abs{w^n}] \ast \phi_\varepsilon) w^n
}_{L^1_t L^2_x} \\
& \leq &
\norm{\abs{w + v^\varepsilon} - \abs{w} -
\abs{w^n + v^\varepsilon} + \abs{w^n}}_{L^1_t L^2_x}
\norm{w^n}_{L^\infty H^1_x(\T)} \\
& \lesssim &
\norm{\abs{w + v^\varepsilon} - \abs{w} -
\abs{w^n + v^\varepsilon} + \abs{w^n}}_{L^1_t L^2_x},
\end{eqnarray*}
where Hölder’s inequality, the embedding
$C([0,T], H^1(\T)) \hookrightarrow L^\infty_t L^\infty_x$ and Young’s
inequality were used for the first estimate and
\eqref{eqn:Linfty_H1_uniform_bound} for the second estimate. Observe
that by the inverse triangle inequality, the bound
\begin{equation*}
\abs{\abs{w + v^\varepsilon} - \abs{w} -
\abs{w^n + v^\varepsilon} + \abs{w^n}} \leq
2 \min \set{\abs{w - w^n}, \abs{v^\varepsilon}} \leq
2 \abs{v^\varepsilon}
\end{equation*}
holds pointwise (in $t$ and $x$). This implies that
\begin{equation*}
\abs{w + v^\varepsilon} - \abs{w} -
\abs{w^n + v^\varepsilon} + \abs{w^n} \xrightarrow{n \to \infty} 0
\end{equation*}
and hence, by the theorem of dominated convergence for the space
variable,
\begin{equation*}
g_n(t) \coloneqq \norm{\abs{w + v^\varepsilon} - \abs{w} -
\abs{w^n + v^\varepsilon} + \abs{w^n}}_{L^2_x}
\xrightarrow{n \to \infty} 0 \qquad \forall t \in [0, T].
\end{equation*}
Moreover, for all $t \in [0, T]$, we have
$g_n(t) \leq 2 \norm{v^\varepsilon(\cdot, t)}_2$ and
$\norm{v^\varepsilon}_{L^1_t L^2_x} \lesssim
\norm{v^\varepsilon}_{L^\infty_t L^2_x} < \infty$. Hence, reapplying the
theorem of dominated convergence for the time variable yields
\begin{equation*}
\norm{([\abs{w + v^\varepsilon} - \abs{w} -
\abs{w^n + v^\varepsilon} + \abs{w^n}] \ast \phi_\varepsilon) w^n
}_{L^1_t L^2_x} \xrightarrow{n \to \infty} 0
\end{equation*}
as claimed.
\end{proof}

Notice that \eqref{eqn:norm_estimate} together with the local
well-posedness of NLS \eqref{eqn:cauchy_modnls} from Theorem \ref{thm:lwp_modnls} imply that NLS \eqref{eqn:cauchy_modnls}
is globally well-posed, i.e. Theorem \ref{thm:mainthm2} is proved.

\begin{rem}
\label{rem:failure_global}
Observe, that in the case $\alpha \neq 2$, the proof would proceed
roughly unchanged up to Equation \eqref{eqn:modsoln_bound}. However, we could replace the
differential inequality \eqref{eqn:diff_ineq} by
\begin{equation*}
\iv{2} \frac{\D}{\D{t}} \norm{v^{\varepsilon, n}(\cdot, t)}_2^2 \lesssim
\norm{w^n}_{L^\infty_t L^\infty_x}^{\alpha - 1}
\norm{v^{\varepsilon, n}}_{L^2_x}^2 +
\norm{w^n}_{L^\infty_t L^\infty_x} \norm{v^{\varepsilon, n}
}_{L^{\alpha}_x}^\alpha
\end{equation*}
and this bound is not sufficient to exclude a blow-up of the $L^2$-norm.
\end{rem}

\appendix
\section{Quadratic and Subquadratic NLS on the torus}
To prove global existence of solutions to the Cauchy problem of the quadratic and 
subquadratic nonlinear Schrödinger equation on $\T$ (that is
\eqref{eqn:cauchy_nls_torus} with $\alpha \in [1, 2]$), we will employ
the mass and energy conservation laws. The justification of conservation
laws requires solutions which are differentiable in time. Again, time
regularity will be obtained from regularity in space. To that end
we will smoothen out the rough quadratic nonlinearity in such a way
that the solutions of the resulting equation still admit suitable
conservation laws. The regularization is slightly different from the
one used in the proof of Theorem \ref{thm:mainthm2}. Let us mention
that the ideas presented here are borrowed from \cite{ginibre1979}
where the same problem was studied on $\R^d$, using a contraction
argument and conservation laws. Since our setting is based on the
torus, we have to work with Bourgain spaces. For the convenience of
the reader, we present some of the arguments in detail.

Observe that, if $w$ is a sufficiently nice $2 \pi$-periodic function
and $\varepsilon > 0$, then
\begin{eqnarray*}
(w \ast \phi_\varepsilon)(x) & = &
\int_{-\infty}^\infty w(y) \phi_\varepsilon(x - y) \D{y} =
\sum_{n \in \Z}
\int_{(2n - 1) \pi}^{(2n + 1)\pi} w(y)
\phi_\varepsilon(x - y) \D{y} \\
& = &
\int_{-\pi}^{\pi} w(y)
\sum_{n \in \Z} \phi_\varepsilon(x - y - 2n \pi) \D{y}.
\end{eqnarray*}
Hence, convolution of $w$ with $\phi_\varepsilon$ on $\R$
corresponds to convolution of $w$ with the periodization of
$\phi_\varepsilon$ on $\T$. For the rest of the paper we will slightly
abuse the notation and denote this periodization also by
$\phi_\varepsilon$. In the same spirit we will use from now on $\ast$ to
denote the convolution on $\T$.

The smooth version of \eqref{eqn:cauchy_nls_torus} for
$\alpha \in [1, 2]$ reads as
\begin{equation}
\label{eqn:cauchy_nls_smooth_T}
\left\{
\begin{IEEEeqnarraybox}[][c]{rCl}
\iu w_t (x, t) + \partial_x^2 w (x,t) \pm
(\abs{w \ast \phi_\varepsilon}^{\alpha - 1}(w \ast \phi_\varepsilon))
\ast \phi_\varepsilon & = & 0 \qquad
(x, t) \in \T \times \R, \\
w(\cdot, 0) & = & w_0 \ast \phi_\varepsilon
\end{IEEEeqnarraybox}
\right.
\end{equation}
and the corresponding Duhamel’s formula is
(cf. \cite[Equations (2.14), (2.13), (2.11) and (1.15)]{ginibre1979})
\begin{equation}
\label{eqn:duhamel_nls_smooth_T}
w(\cdot, t) =
e^{\iu t \partial_{x}^{2}} (w_0 \ast \phi_\varepsilon) \pm
\iu \int_0^t e^{\iu (t - \tau) \partial_{x}^{2}}
\left[\left(
\abs{w \ast \phi_\varepsilon}^{\alpha - 1}
(w \ast \phi_\varepsilon)\right) \ast \phi_\varepsilon
(\cdot, \tau)\right]
\D{\tau}.
\end{equation}
From now on, we denote by $\FT$ and $\FT^{(-1)}$ the \emph{Fourier
transform} and the inverse Fourier transform, on the torus, respectively. We use the
symmetric choice of constants and write also
\begin{eqnarray*}
\hat{f}(\xi) & \coloneqq & (\FT f)(\xi) =
\iv{\sqrt{2 \pi}}
\int_{-\pi}^\pi e^{-\iu \xi \cdot x} f(x) \D{x}, \\
\check{g}(x) & \coloneqq & \left(\FT^{(-1)} g\right)(x) =
\iv{\sqrt{2 \pi}}
\sum_{\xi \in \Z} e^{\iu \xi \cdot x} g(\xi).
\end{eqnarray*}
One has $\FT (f \ast g) = \sqrt{2 \pi} \hat{f} \hat{g}$. Furthermore,
let $\jb{\xi} \coloneqq \sqrt{1 + \abs{\xi}^2}$ for any $\xi \in \R$ and
$J^s w \coloneqq \FT^{(-1)} \jb{\cdot}^s \FT w$ for any
$w \in (C^\infty(\T))'$.

\subsection{Prerequisites}
In this subsection, we present some technical results from the literature, needed for treatment of the quadratic nonlinearity. 
\begin{lem}
\label{lem:transference}
Let $p \in [1, \infty]$ and $\varepsilon \geq 0$. Then for any
$w \in L^p(\T)$ one has
\begin{equation*}
\norm{w \ast \phi_\varepsilon}_{L^p(\T)} \leq
\norm{w}_{L^p(\T)}.
\end{equation*}
\end{lem}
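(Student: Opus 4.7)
The plan is to reduce this to Young's convolution inequality on the torus, noting that the periodized heat kernel is an $L^1(\T)$-normalized nonnegative mollifier.

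First I would dispose of the case $\varepsilon = 0$ trivially: by definition $\phi_0 = \delta_0$, so $w \ast \phi_0 = w$ and the claimed inequality is an equality.

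For $\varepsilon > 0$, the crucial computation is to evaluate the $L^1(\T)$-norm of the periodization of $\phi_\varepsilon$. Since $\phi_\varepsilon \geq 0$ on $\R$ with $\int_\R \phi_\varepsilon(x)\,\D x = 1$, unfolding the periodization and using Tonelli gives
\begin{equation*}
\norm{\phi_\varepsilon}_{L^1(\T)} =
\int_{-\pi}^{\pi} \sum_{n \in \Z} \phi_\varepsilon(x - 2\pi n)\,\D x =
\sum_{n \in \Z} \int_{(2n-1)\pi}^{(2n+1)\pi} \phi_\varepsilon(y)\,\D y =
\int_\R \phi_\varepsilon(y)\,\D y = 1.
\end{equation*}

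With this in hand, the lemma follows immediately from Young's convolution inequality on $\T$, which gives
\begin{equation*}
\norm{w \ast \phi_\varepsilon}_{L^p(\T)} \leq
\norm{w}_{L^p(\T)} \norm{\phi_\varepsilon}_{L^1(\T)} =
\norm{w}_{L^p(\T)}
\end{equation*}
for every $p \in [1, \infty]$.

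There is no real obstacle here; the only mild subtlety is ensuring that the convention for convolution on $\T$ (as set up in the paragraph just before the lemma, using the periodization of $\phi_\varepsilon$) is consistent with the normalization in Young's inequality, which is exactly what the computation of $\norm{\phi_\varepsilon}_{L^1(\T)} = 1$ confirms.
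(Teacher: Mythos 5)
Your proof is correct: the paper states this lemma without proof (it is treated as a standard fact), and your argument — disposing of $\varepsilon=0$ via $\phi_0=\delta_0$, computing $\norm{\phi_\varepsilon}_{L^1(\T)}=1$ for the periodized kernel by unfolding and Tonelli, and then applying Young's convolution inequality on $\T$ — is exactly the standard reasoning the paper implicitly relies on, and it is consistent with the paper's unnormalized convention for convolution on the torus.
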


\begin{lem}
\label{lem:ft_heat_one}
Let $s \in \R$ and $w \in H^s(\T)$. Then
\begin{equation*}
\norm{w \ast \phi_\varepsilon}_{H^s(\T)} \leq
\norm{w}_{H^s(\T)} \qquad \text{and} \qquad
\norm{w \ast \phi_\varepsilon}_{\dot{H}^s(\T)}
\leq \norm{w}_{\dot{H}^s(\T)} \qquad
\forall \varepsilon \geq 0
\end{equation*}
where we denote by $\dot{H}^s(\T)$ the homogeneous Sobolev norm on the torus.
Furthermore, if $\varepsilon > 0$, then
\begin{equation*}
\norm{w \ast \phi_\varepsilon}_{H^s(\T)}  \lesssim_{\varepsilon, s}
\norm{w}_{L^2(\T)}.
\end{equation*}
\end{lem}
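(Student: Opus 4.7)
The plan is to prove all three inequalities simultaneously by passing to the Fourier side on $\T$ and exploiting the explicit Gaussian decay of the Fourier coefficients of the periodized heat kernel.

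First I would compute $\hat{\phi}_\varepsilon$ on $\T$. Since $\phi_\varepsilon$ is defined on $\T$ as the periodization of the Gaussian $\frac{1}{2\sqrt{\pi\varepsilon}}e^{-|x|^2/(4\varepsilon)}$ on $\R$, the Poisson summation formula (together with the known Fourier transform of the Gaussian on $\R$) yields
\begin{equation*}
\hat{\phi}_\varepsilon(\xi) = \frac{1}{\sqrt{2\pi}} e^{-\varepsilon \xi^2} \qquad \forall \xi \in \Z,
\end{equation*}
with the symmetric normalization fixed in the paper. The case $\varepsilon = 0$ corresponds to $\phi_0 = \delta_0$, in which case $w \ast \phi_0 = w$ and the first two bounds are trivial.

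Next, using the convolution identity $\FT(w \ast \phi_\varepsilon) = \sqrt{2\pi}\,\hat{w}\,\hat{\phi}_\varepsilon$, I would write
\begin{equation*}
\FT(w \ast \phi_\varepsilon)(\xi) = \hat{w}(\xi)\, e^{-\varepsilon \xi^2} \qquad \forall \xi \in \Z.
\end{equation*}
Since $0 < e^{-\varepsilon \xi^2} \leq 1$ for all $\xi \in \Z$ and $\varepsilon \geq 0$, multiplying by $\jb{\xi}^{2s}$ (respectively $|\xi|^{2s}$) and summing over $\xi \in \Z$ gives
\begin{equation*}
\norm{w \ast \phi_\varepsilon}_{H^s(\T)}^2 = \sum_{\xi \in \Z} \jb{\xi}^{2s} |\hat{w}(\xi)|^2 e^{-2\varepsilon \xi^2} \leq \norm{w}_{H^s(\T)}^2,
\end{equation*}
and analogously for the homogeneous norm. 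This takes care of the first two inequalities.

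For the third inequality, when $\varepsilon > 0$ I would separate the symbol $\jb{\xi}^{2s} e^{-2\varepsilon \xi^2}$ from the $L^2$-part of the sum. More precisely, setting
\begin{equation*}
C(\varepsilon, s) \coloneqq \sup_{\xi \in \Z} \jb{\xi}^{2s} e^{-2\varepsilon \xi^2},
\end{equation*}
the rapid decay of the Gaussian beats any polynomial growth in $\xi$, so $C(\varepsilon, s) < \infty$ for every $\varepsilon > 0$ and $s \in \R$. Therefore
\begin{equation*}
\norm{w \ast \phi_\varepsilon}_{H^s(\T)}^2 \leq C(\varepsilon, s) \sum_{\xi \in \Z} |\hat{w}(\xi)|^2 = C(\varepsilon, s) \norm{w}_{L^2(\T)}^2,
\end{equation*}
which is the claimed $\varepsilon$-dependent bound. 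There is no real obstacle here; the only point requiring a line of justification is the identification of the Fourier coefficients of the periodized Gaussian, everything else being a one-line Plancherel computation.
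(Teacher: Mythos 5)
Your proof is correct, and it is the natural argument: the paper states this lemma without proof, and your Fourier-side computation (identifying $\hat{\phi}_\varepsilon(\xi) = \tfrac{1}{\sqrt{2\pi}}e^{-\varepsilon\xi^2}$ for the periodized Gaussian under the paper's symmetric normalization, then using $e^{-2\varepsilon\xi^2}\leq 1$ for the first two bounds and $\sup_{\xi\in\Z}\jb{\xi}^{2s}e^{-2\varepsilon\xi^2}<\infty$ for the third) is exactly the standard route one would take. No gaps.
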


\begin{lem}
\label{lem:app_BA}
(Cf. \cite[Theorem 3.16]{brezis2011}.)
Let $w^n \xrightarrow{n \to \infty} w$ in $L^2(\T)$ and assume that 
$\sup_{n \in \N} \norm{w^n}_{H^1(\T)} < \infty$. Then $w \in H^1(\T)$ and
\begin{equation}
\label{eqn:h1_bnd}
\norm{w}_{H^1(\T)} \leq
\liminf_{n \to \infty} \norm{w^n}_{H^1(\T)},
\qquad
\norm{w}_{\dot{H}^1(\T)} \leq
\liminf_{n \to \infty} \norm{w^n}_{\dot{H}^1(\T)},
\end{equation}
and $w^n \rightharpoonup w$ in $H^1(\T)$, i.e. for any $u \in H^1(\T)$
one has
\begin{equation}
\label{eqn:weak_conv}
\lim_{n \to \infty} \dup{w^n}{u}_{H^1(\T)} =
\lim_{n \to \infty}
\sum_{k \in \Z} \jb{k}^2 \overline{\hat{w}^n_k} \hat{u}_k =
\dup{w}{u}_{H^1(\T)}.
\end{equation}
If additionally
$\norm{w^n}_{H^1} \xrightarrow{n \to \infty} \norm{w}_{H^1}$, then
$w^n \xrightarrow{n \to \infty} w$ in $H^1(\T)$.
\end{lem}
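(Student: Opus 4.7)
The plan is to bootstrap from strong $L^2$ convergence plus a uniform $H^1$ bound up to weak $H^1$ convergence, and then to read off all three conclusions by standard Hilbert-space manipulations. First I would extract weak limits: since $H^1(\T)$ is a Hilbert, hence reflexive, space, the assumption $\sup_{n \in \N} \norm{w^n}_{H^1(\T)} < \infty$ together with the Banach--Alaoglu theorem (which is exactly the referenced \cite[Theorem 3.16]{brezis2011}) guarantees that every subsequence of $(w^n)$ has a further subsequence $(w^{n_k})$ converging weakly in $H^1(\T)$ to some limit $\tilde{w} \in H^1(\T)$. The continuous inclusion $H^1(\T) \hookrightarrow L^2(\T)$ turns weak $H^1$-convergence into weak $L^2$-convergence, and by uniqueness of weak limits in $L^2$ combined with the assumed strong convergence $w^n \to w$ in $L^2(\T)$, we must have $\tilde{w} = w$. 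In particular $w \in H^1(\T)$, and the standard subsequence-of-subsequences principle upgrades this to full-sequence weak convergence $w^n \rightharpoonup w$ in $H^1(\T)$, which is precisely \eqref{eqn:weak_conv}.

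Next I would derive the lower semi-continuity bounds in \eqref{eqn:h1_bnd} from the weak convergence just established by testing against $w$ itself: the Cauchy--Schwarz inequality yields
\begin{equation*}
\norm{w}_{H^1(\T)}^2 = \lim_{n\to\infty} \Re \dup{w^n}{w}_{H^1(\T)} \leq \norm{w}_{H^1(\T)} \liminf_{n\to\infty} \norm{w^n}_{H^1(\T)},
\end{equation*}
and dividing through (the case $w = 0$ being trivial) gives the first half of \eqref{eqn:h1_bnd}. The homogeneous version is identical, replacing the full inner product by the semi-inner product $\sum_{k \in \Z} k^2 \overline{\hat{f}_k} \hat{g}_k$ and observing that weak $H^1$-convergence implies weak convergence in this semi-inner product as well.

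Finally, for the additional claim of strong $H^1$ convergence under the hypothesis $\norm{w^n}_{H^1(\T)} \xrightarrow{n \to \infty} \norm{w}_{H^1(\T)}$, I would expand the Hilbert-space identity
\begin{equation*}
\norm{w^n - w}_{H^1(\T)}^2 = \norm{w^n}_{H^1(\T)}^2 - 2 \Re \dup{w^n}{w}_{H^1(\T)} + \norm{w}_{H^1(\T)}^2;
\end{equation*}
the first term converges to $\norm{w}_{H^1(\T)}^2$ by hypothesis, the middle term converges to $2 \norm{w}_{H^1(\T)}^2$ by the weak convergence from step one, and hence the right-hand side vanishes. Since the lemma consists of packaged Hilbert-space generalities, I do not anticipate any genuine obstacle; the only subtle bookkeeping point is the passage from weak convergence along every sub-subsequence to weak convergence of the full sequence, which is handled by the standard subsequence principle noted above.
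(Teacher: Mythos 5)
Your proposal is correct, and the paper itself offers no proof of this lemma -- it simply delegates to the citation of Brezis, which is exactly the weak-compactness (Banach--Alaoglu/reflexivity) fact you invoke; the rest of your argument (identification of the weak limit via the strong $L^2$ convergence, the subsequence principle, lower semicontinuity by testing against $w$, and strong convergence from norm convergence) is the standard Hilbert-space reasoning the authors take for granted. In particular you handle the one point worth spelling out, namely that the $\dot{H}^1$ pairing with the fixed element $w$ is a bounded functional on $H^1(\T)$, so weak $H^1$ convergence suffices for the homogeneous bound as well.
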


In the following we are going to use the $X^{s,b}$ spaces on the torus where $s, b\in\R$. They are defined via the norm (see equation (3.49) in \cite{erdogan2016})
\begin{equation}
\|w\|_{X^{s,b}}=\|\langle k\rangle ^{s}\langle \tau+k^{2}\rangle ^{b}\hat{w}(\tau,k)\|_{L^{2}_{\tau}l^{2}_{k}}.
\end{equation}

\begin{lem}[$X^{0, \frac{3}{8}} \hookrightarrow L^4(\T \times \R)$]
(See \cite[Proposition 2.13]{tao2006}.)
We have
\label{lem:bourgain_lebesgue_embedding}
\begin{equation*}
\norm{w}_{L^4(\T \times \R)} \lesssim
\norm{w}_{X^{0, \frac{3}{8}}}
\end{equation*}
for any $w \in \LSS(\R, C^\infty(\T))$.
\end{lem}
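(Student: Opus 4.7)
This is Bourgain's classical $L^4$-Strichartz estimate for the periodic Schrödinger equation, reformulated in terms of the Bourgain space $X^{0, 3/8}$. My plan is to follow his original argument: reduce to a bilinear $L^2$ estimate via Plancherel/TT$^*$, and then exploit an algebraic identity together with a lattice-point counting argument on parabolas.

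\medskip

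\emph{Step 1 (Reduction to a bilinear $L^2$ estimate).} Since $\norm{w}_{L^4(\T \times \R)}^2 = \norm{w \bar w}_{L^2(\T \times \R)}$, Plancherel on $\T \times \R$ gives
\begin{equation*}
\norm{w \bar w}_{L^2(\T \times \R)}^2 = \sum_{k \in \Z} \int_\R \abs{\FT(w \bar w)(k, \tau)}^2 \D \tau,
\end{equation*}
where
\begin{equation*}
\FT(w \bar w)(k, \tau) = \sum_{k_1 \in \Z} \int_\R \hat w(k_1, \tau_1) \overline{\hat w(k_1 - k, \tau_1 - \tau)} \D \tau_1.
\end{equation*}
Writing $\hat w(k, \tau) = \jb{\tau + k^2}^{-3/8} F(k, \tau)$ with $\norm{F}_{\ell^2_k L^2_\tau} = \norm{w}_{X^{0, 3/8}}$, the problem reduces to bounding a weighted bilinear convolution of $F$ with itself by $\norm{F}_{\ell^2 L^2}^2$.

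\medskip

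\emph{Step 2 (Algebraic identity and counting lattice points).} The essential resonance identity is
\begin{equation*}
(\tau_1 + k_1^2) - \bigl(\tau_1 - \tau + (k_1 - k)^2 \bigr) = \tau + 2 k k_1 - k^2.
\end{equation*}
Hence, at any contributing $(k_1, \tau_1)$, at least one of the two Bourgain weights in the denominator dominates $\jb{\tau + 2 k k_1 - k^2}^{1/2}$. After applying Cauchy–Schwarz in $(k_1, \tau_1)$ and carrying out the temporal integral via the standard estimate $\int \jb{t - a}^{-3/4}\jb{t - b}^{-3/4}\,\D t \lesssim \jb{a - b}^{-1/2}$, the problem reduces to a counting estimate: for fixed $(k, \tau)$ with $k \neq 0$, the values $\tau + 2 k k_1 - k^2$ form an arithmetic progression in $k_1$ of step $2\abs{k}$, so only $O(1 + R/\abs{k})$ lattice points $k_1$ yield a value of size $\leq R$. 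A dyadic decomposition in the size of the Bourgain weights, combined with this counting, produces the required bound.

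\medskip

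\emph{Step 3 (Degenerate frequency $k = 0$).} When $k = 0$, the arithmetic progression collapses and Step 2 breaks down. This case is handled separately: $\FT(w \bar w)(0, \tau) = \sum_{k_1} (\hat w(k_1, \cdot) * \tilde{\hat w}(k_1, \cdot))(\tau)$ is a sum of temporal convolutions, which is estimated directly via Young's and Cauchy–Schwarz inequalities in $\ell^2_{k_1} L^2_\tau$.

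\medskip

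The main obstacle is the sharpness of the counting step: the exponent $3/8$ is critical because the temporal decay $\jb{\cdot}^{-1/2}$ obtained from integrating two Bourgain weights of order $3/4$ exactly balances, after dyadic summation, the arithmetic progression spacing $2 \abs{k}$. Any smaller exponent would cause the dyadic sum to diverge, and the argument rests entirely on this delicate numerology.
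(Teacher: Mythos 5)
The paper itself does not prove this lemma; it cites \cite[Proposition 2.13]{tao2006}, i.e.\ Bourgain's periodic $L^4$ Strichartz estimate. Measured against that classical argument, your sketch has a genuine gap in Step 2. By reducing to $w\bar w$ you arrive at the resonance function $(\tau_1+k_1^2)-(\tau_1-\tau+(k_1-k)^2)=\tau+2kk_1-k^2$, which is \emph{linear} in $k_1$: the ``lattice points on parabolas'' announced in your opening paragraph have disappeared. The resulting count of admissible $k_1$, namely $O(1+R/\abs{k})$ with $R$ the size of the larger modulation, is not uniform in $k$ and is far too weak at small nonzero output frequencies. Concretely, after decomposing dyadically in the modulations $\jb{\tau_1+k_1^2}\sim 2^{j_1}$ and $\jb{\tau_1-\tau+(k_1-k)^2}\sim 2^{j_2}$, the pointwise Cauchy--Schwarz step bounds the bilinear piece by the square root of the measure of the constraint set, which is $\lesssim 2^{j_{\min}}\bigl(1+2^{j_{\max}}/\abs{k}\bigr)$; at $\abs{k}=1$ this gives the factor $2^{j_{\min}/2}2^{j_{\max}/2}$, and summing against the weights $2^{-3(j_1+j_2)/8}$ leaves $2^{(j_1+j_2)/8}$, which diverges. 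So your closing ``numerology'' claim is not correct: with arithmetic-progression counting the scheme closes only for $b>\iv{2}$, not at $b=\frac{3}{8}$. The same defect reappears in Step 3: for $k=0$ the proposed Young/Cauchy--Schwarz bound would require $\jb{\cdot}^{-3/8}\in L^2_\tau$, which fails; that piece is really the embedding $X^{0,\frac38}\hookrightarrow L^4_t L^2_x$ and needs a different (Sobolev-in-time) argument.

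The standard proof avoids all of this by writing $\norm{w}_{L^4(\T\times\R)}^2=\norm{w^2}_{L^2(\T\times\R)}$ (no complex conjugate), so that the resonance function becomes $\tau+k_1^2+(k-k_1)^2=\tau+\tfrac{1}{2}\bigl((2k_1-k)^2+k^2\bigr)$, genuinely quadratic in $k_1$. The number of integers $k_1$ for which this quantity has size at most $R$ is then $O(1+\sqrt{R})$ \emph{uniformly} in $(k,\tau)$ (squares in an interval of length $\sim R$), which after Cauchy--Schwarz yields the bilinear bound $2^{j_{\min}/2}2^{j_{\max}/4}$ and makes the dyadic sum converge exactly at $b=\frac38$; it also eliminates the need for a separate $k=0$ case, since the quadratic resonance is nondegenerate there as well. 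Replacing $w\bar w$ by $w^2$ and redoing your Step 2 with the square-counting bound repairs the argument and recovers the cited result.
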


\begin{lem}[$X_\delta^{s, b} \hookrightarrow C(H^s)$]
\label{lem:bs_sobolev_embedding}
(Cf. \cite[Lemma 3.9]{erdogan2016}.)
Let $b > \iv{2}$ and $s \in \R$.
\todo{On what does the implicit constant depend?}{Then}
\begin{equation*}
\norm{w}_{C([0, \delta], H^s(\T))} \lesssim
\norm{w}_{X_\delta^{s, b}}.
\end{equation*}
\end{lem}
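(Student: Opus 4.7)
The plan is to reduce to the case $s=0$ and then exploit the Fourier-analytic definition of $X^{0,b}$ together with the fact that $\langle\sigma\rangle^{-b} \in L^2_\sigma(\R)$ whenever $b > 1/2$. Concretely, write $w = J^{-s} u$ where $u \in X_\delta^{0,b}$ with $\|u\|_{X_\delta^{0,b}} = \|w\|_{X_\delta^{s,b}}$; since $J^s$ is an isometry from $H^s(\T)$ to $L^2(\T)$, it suffices to establish the embedding $X_\delta^{0,b} \hookrightarrow C([0,\delta], L^2(\T))$. By the definition of the restriction space, I may pick an extension $\tilde u \in X^{0,b}(\R\times\T)$ of $u$ with $\|\tilde u\|_{X^{0,b}} \leq 2\|u\|_{X_\delta^{0,b}}$, and it will be enough to show $\tilde u \in C_b(\R, L^2(\T))$ with uniform bound controlled by $\|\tilde u\|_{X^{0,b}}$.

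Next I would use the representation
\begin{equation*}
\tilde u(x,t) = \sum_{k \in \Z} e^{\iu k x - \iu k^2 t}\, h_k(t), \qquad
h_k(t) \coloneqq \int_\R \hat{\tilde u}(\sigma - k^2, k)\, e^{\iu \sigma t}\, \D\sigma,
\end{equation*}
so that the change of variable $\sigma = \tau + k^2$ realises $h_k$ as the inverse Fourier transform (in the modulation variable) of $g_k(\sigma) \coloneqq \hat{\tilde u}(\sigma-k^2, k)$, which carries weight $\langle\sigma\rangle^b$ in the $X^{0,b}$ norm. For each fixed $t \in \R$, Plancherel on $\T$ gives $\|\tilde u(\cdot,t)\|_{L^2(\T)}^2 = \sum_k |h_k(t)|^2$, and Cauchy--Schwarz yields the pointwise bound
\begin{equation*}
|h_k(t)| \leq \int_\R |g_k(\sigma)|\, \D\sigma
= \int_\R \langle\sigma\rangle^{b} |g_k(\sigma)|\cdot \langle\sigma\rangle^{-b}\, \D\sigma
\leq C_b \Bigl(\int_\R \langle\sigma\rangle^{2b} |g_k(\sigma)|^2\, \D\sigma\Bigr)^{1/2},
\end{equation*}
since $b > 1/2$ makes $\|\langle\sigma\rangle^{-b}\|_{L^2_\sigma}$ finite. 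Summing in $k$ and undoing the change of variables gives $\|\tilde u(\cdot,t)\|_{L^2(\T)} \lesssim_b \|\tilde u\|_{X^{0,b}}$ uniformly in $t$.

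For the continuity in $t$, note that for each $k$ the function $g_k$ lies in $L^1(\R)$ by the same Cauchy--Schwarz argument, so $t \mapsto h_k(t)$ is continuous (Riemann--Lebesgue / dominated convergence). The partial sums $S_N(x,t) = \sum_{|k|\leq N} e^{\iu k x - \iu k^2 t} h_k(t)$ thus define continuous $L^2(\T)$-valued functions of $t$, and
\begin{equation*}
\|\tilde u(\cdot,t) - S_N(\cdot,t)\|_{L^2(\T)}^2 = \sum_{|k|>N} |h_k(t)|^2
\lesssim_b \sum_{|k|>N} \int_\R \langle\sigma\rangle^{2b} |g_k(\sigma)|^2\, \D\sigma,
\end{equation*}
which tends to zero as $N\to\infty$ uniformly in $t$, by the tail behaviour of the convergent series $\|\tilde u\|_{X^{0,b}}^2$. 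A uniform limit of continuous $L^2$-valued functions is continuous, hence $\tilde u \in C_b(\R, L^2(\T))$ with the stated bound, and restricting to $[0,\delta]$ completes the proof.

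The potentially delicate point is simply the uniform tail control used to pass continuity through the $k$-sum, but this is an immediate consequence of the same Cauchy--Schwarz estimate that delivered the pointwise bound, so no genuine obstacle arises; the whole argument is essentially a one-line Sobolev embedding in the modulation variable, carried out simultaneously for every spatial frequency.
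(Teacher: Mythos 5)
Your proposal is correct, and it is essentially the standard proof of the embedding $X^{s,b}_\delta \hookrightarrow C([0,\delta],H^s)$ for $b>\iv{2}$: the paper itself gives no proof but refers to \cite[Lemma 3.9]{erdogan2016}, and your argument (reduction to $s=0$, choice of an extension, the representation $\tilde u(x,t)=\sum_k e^{\iu kx-\iu k^2t}h_k(t)$, Cauchy--Schwarz in the modulation variable using $\langle\sigma\rangle^{-b}\in L^2_\sigma$, and uniform convergence of the partial sums to get continuity) is precisely the argument behind that reference, with the implicit constant depending only on $b$.
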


\begin{lem}[Linear Schrödinger evolution in $X_\delta^{s, b}$]
\label{lem:bs_linear_estimate}
(Cf. \cite[Lemma 3.10]{erdogan2016}.)
Let $b, s \in \R$, $\delta \in (0, 1]$ and $\eta$ a smooth cut-off in
time. Then
\begin{equation*}
\norm{\eta(t) e^{\iu t \partial_{x}^{2}} w_0}_{X_{\delta}^{s, b}} \lesssim
\norm{w_0}_{H^s(\T)} \qquad \forall w_0 \in H^s(\T).
\end{equation*}
\end{lem}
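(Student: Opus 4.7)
The plan is to bound the unrestricted norm $\|\eta(t) e^{\iu t \partial_x^2} w_0\|_{X^{s,b}}$, since by definition of the restriction space $X^{s,b}_\delta$ as an infimum over extensions, any admissible extension gives an upper bound. The function $\eta(t) e^{\iu t \partial_x^2} w_0$ itself restricts to $e^{\iu t \partial_x^2} w_0$ on $[0,\delta]$ whenever the cut-off $\eta$ equals one on $[0,1]$ (which we may assume), so this is an admissible extension.

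The main computation is to take the space-time Fourier transform. On the torus in $x$ and on $\R$ in $t$, one has
\begin{equation*}
\FT_{t,x}\bigl[\eta(t) e^{\iu t \partial_x^2} w_0\bigr](\tau, k)
= \hat{w}_0(k) \, \FT_t\bigl[\eta(t) e^{-\iu t k^2}\bigr](\tau)
= \hat{w}_0(k) \, \hat{\eta}(\tau + k^2),
\end{equation*}
up to the fixed constant coming from the choice of normalization. Plugging this into the definition of the $X^{s,b}$ norm and performing the change of variable $\sigma = \tau + k^2$ in the $\tau$-integral for each fixed $k$ yields
\begin{equation*}
\norm{\eta(t) e^{\iu t \partial_x^2} w_0}_{X^{s,b}}^2
= \sum_{k \in \Z} \jb{k}^{2s} |\hat{w}_0(k)|^2
\int_\R \jb{\sigma}^{2b} |\hat{\eta}(\sigma)|^2 \D\sigma
= \norm{w_0}_{H^s(\T)}^2 \cdot \norm{\eta}_{H^b(\R)}^2.
\end{equation*}
Since $\eta \in \LSS(\R)$, the factor $\norm{\eta}_{H^b(\R)}$ is finite for every $b \in \R$, and the implicit constant depends only on $\eta$ (hence on $b$), not on $\delta$.

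There is essentially no obstacle: the argument is a direct Fourier-side computation exploiting that in $X^{s,b}$ the weight $\jb{\tau+k^2}^b$ is precisely tailored to the linear Schrödinger flow, which turns it into a flat weight $\jb{\sigma}^b$ after a $k$-dependent translation. The only care needed is at the level of bookkeeping: making sure the extension $\eta(t) e^{\iu t \partial_x^2} w_0$ lies in $X^{s,b}(\R \times \T)$ (so that it is an admissible competitor in the infimum defining $X^{s,b}_\delta$) and that its norm is independent of $\delta \in (0,1]$, both of which follow from the identity above.
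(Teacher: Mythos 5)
Your argument is correct and is exactly the standard proof of this estimate: the paper itself does not prove the lemma but cites \cite[Lemma 3.10]{erdogan2016}, whose argument is precisely your Fourier-side computation $\FT_{t,x}[\eta(t)e^{\iu t\partial_x^2}w_0](\tau,k)=\hat w_0(k)\hat\eta(\tau+k^2)$ followed by the translation $\sigma=\tau+k^2$, giving the bound $\norm{w_0}_{H^s(\T)}\norm{\eta}_{H^b(\R)}$ uniformly in $\delta\in(0,1]$. No gaps; the remark about taking the unrestricted norm of the extension $\eta(t)e^{\iu t\partial_x^2}w_0$ as a competitor in the infimum defining $X^{s,b}_\delta$ is the right bookkeeping.
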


\begin{lem}[Treating the integral term in $X_\delta^{s, b}$]
\label{lem:bs_integral_estimate}
(Cf. \cite[Lemma 3.12]{erdogan2016}.)
Let $b \in \left(\iv{2}, 1 \right]$, $s \in \R$ and $\delta \leq 1$. Set
$b' \coloneqq b - 1$. Then
\begin{equation*}
\norm{\int_0^t e^{\iu (t - \tau) \partial_{x}^{2}} F(\tau) \D{\tau}
}_{X_\delta^{s, b}} \lesssim_b
\norm{F}_{X_\delta^{s, b'}} \qquad \forall F \in X_\delta^{s, b'}.
\end{equation*}
\end{lem}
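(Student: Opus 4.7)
The plan is to adapt the standard proof of the Duhamel estimate in Bourgain spaces closely following \cite[Lemma 3.12]{erdogan2016}. First I would reduce to the case $s=0$, since the operator $J^s$ commutes with $e^{\iu t \partial_x^2}$ and multiplies the $X^{s,b}$-norms by the right weight. Next I would use the restriction definition of $X_\delta^{s,b}$: extend $F$ to the full line in time with $\norm{\tilde F}_{X^{0,b'}} \leq 2 \norm{F}_{X_\delta^{0,b'}}$, and estimate $\eta(t/\delta) \int_0^t e^{\iu(t-\tau)\partial_x^2} \tilde F(\tau) \D{\tau}$ in the full space $X^{0,b}$, where $\eta \in C_c^\infty(\R)$ equals $1$ on $[-1,1]$.

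The heart of the argument is an explicit space--time Fourier computation: writing $\tilde F(x,\tau) = \sum_k \int_{\R} \widehat{\tilde F}(k,\sigma) e^{\iu k x + \iu \sigma \tau}\, \D{\sigma}$, one obtains for the spatial Fourier coefficient of the Duhamel term the formula
\begin{equation*}
\FT_x\!\left[\int_0^t e^{\iu(t-\tau)\partial_x^2} \tilde F(\tau) \D{\tau}\right](k) \;=\; -\iu \int_{\R} \widehat{\tilde F}(k,\sigma)\, \frac{e^{\iu t \sigma} - e^{-\iu t k^2}}{\sigma + k^2}\, \D{\sigma}.
\end{equation*}
Insert a smooth partition $1 = \psi(\sigma+k^2) + (1-\psi(\sigma+k^2))$, where $\psi$ is a bump supported in $\set{|\cdot|\leq 1}$, to split into a \emph{non-resonant} and a \emph{resonant} piece.

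For the non-resonant piece, split the fraction into two separate terms. The first has full Fourier transform $(1-\psi(\sigma+k^2))\widehat{\tilde F}(k,\sigma)/(\sigma+k^2)$ times $\eta(t/\delta)$, so dividing by $\jb{\sigma+k^2}$ consumes exactly one unit of the temporal weight and gives the bound by $\norm{F}_{X_\delta^{0,b-1}}$ after absorbing the bounded cut-off $\eta(t/\delta)$. The second term is $e^{-\iu t k^2} g(k)$ times $\eta(t/\delta)$, where $g(k) \coloneqq \int (1-\psi(\sigma+k^2))\widehat{\tilde F}(k,\sigma)/(\sigma+k^2)\, \D\sigma$; here Cauchy--Schwarz in $\sigma$ (using $b > \iv{2}$ so that $\jb{\sigma+k^2}^{2b-2} \in L^1_\sigma$) gives $\norm{g}_{\ell^2_k} \lesssim_b \norm{F}_{X_\delta^{0,b-1}}$, and Lemma \ref{lem:bs_linear_estimate} then converts the linear evolution into the $X_\delta^{0,b}$-norm.

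For the resonant piece I would Taylor expand
\begin{equation*}
e^{\iu t \sigma} - e^{-\iu t k^2} = e^{-\iu t k^2}\left( e^{\iu t(\sigma+k^2)} - 1 \right) = e^{-\iu t k^2} \sum_{n \geq 1} \frac{(\iu t)^n (\sigma+k^2)^n}{n!},
\end{equation*}
so that the resonant contribution becomes $\sum_{n\geq 1} \frac{(\iu t)^n}{n!}\, e^{-\iu t k^2} g_n(k)$ with $g_n(k) = \int \psi(\sigma+k^2)\widehat{\tilde F}(k,\sigma)(\sigma+k^2)^{n-1}\, \D\sigma$. On the support of $\psi$ one has $|\sigma+k^2|\leq 1$, so Cauchy--Schwarz yields $\norm{g_n}_{\ell^2_k} \lesssim_b \norm{F}_{X_\delta^{0,b-1}}$ uniformly in $n$. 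The factor $t^n \eta(t/\delta)$ is a Schwartz function in time, and Lemma \ref{lem:bs_linear_estimate} (applied to each summand) together with $n!$-decay gives convergence of the series.

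The main obstacle I anticipate is the bookkeeping of the temporal cut-off and the argument that a time-multiplier of the form $\eta(t/\delta) t^n$ does not blow up the $X^{0,b}$-norm: one must check that multiplication by such Schwartz functions gives bounds with constants independent of $\delta \leq 1$ and summable in $n$. Everything else is routine Fourier manipulation, and the constants come out depending only on $b$ as claimed.
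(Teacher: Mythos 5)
The paper itself does not prove this lemma; it only cites \cite[Lemma 3.12]{erdogan2016}, and your argument is essentially a reconstruction of that standard proof: reduce to $s=0$, extend $F$ off $[0,\delta]$, compute the spatial Fourier coefficients of the Duhamel term, split into resonant and non-resonant parts with a bump in $\sigma+k^2$, and Taylor-expand the resonant part. The structure and the Cauchy--Schwarz steps are right, up to one slip: on the non-resonant region the weight produced by Cauchy--Schwarz is $\jb{\sigma+k^2}^{2(1-b)}\abs{\sigma+k^2}^{-2}\approx\jb{\sigma+k^2}^{-2b}$, which is integrable there precisely because $b>\iv{2}$; the exponent $2b-2$ you quote is \emph{not} integrable for $b\in\left(\iv{2},1\right]$, though your conclusion (the condition $b>\iv{2}$) is the correct one.

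The genuine gap is the time cut-off. You localize with $\eta(t/\delta)$ and defer the check that such multipliers act on $X^{0,b}$ with constants independent of $\delta\le1$. That check fails for $b>\iv{2}$: multiplication by $\eta(t/\delta)$ is not uniformly bounded on $X^{0,b}$. Concretely, for your boundary term one has $\norm{\eta(t/\delta)\,e^{\iu t\partial_x^2}g}_{X^{0,b}}\approx\delta^{\frac{1}{2}-b}\norm{g}_{L^2}$, since the space--time Fourier transform is $\delta\,\hat{\eta}(\delta(\sigma+k^2))\,\hat{g}(k)$ and $\delta\int\jb{\mu/\delta}^{2b}\abs{\hat{\eta}(\mu)}^2\D\mu\approx\delta^{1-2b}$; the same scaling loss affects the first non-resonant piece. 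Since the lemma asserts a constant depending only on $b$ (uniform in $\delta\le1$), this is not acceptable as written. The fix is simple and is what the cited proof does: use a single \emph{unit-scale} cut-off $\eta$ with $\eta\equiv1$ on $[-1,1]$, independent of $\delta$. Because $\delta\le1$, the function $\eta(t)\int_0^t e^{\iu(t-\tau)\partial_x^2}\tilde F(\tau)\D\tau$ agrees with the Duhamel term on $[0,\delta]$, hence is an admissible extension, and the restricted norm is bounded by its $X^{0,b}$-norm with constants depending only on $b$ and the fixed $\eta$. This is also how Lemma \ref{lem:bs_linear_estimate} is meant to be invoked --- with a fixed cut-off, not one rescaled by $\delta$. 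With that replacement your resonant series argument goes through: for compactly supported $\eta$ and $b\le1$ one has $\norm{\jb{\lambda}^{b}\,\widehat{t^n\eta}(\lambda)}_{L^2_\lambda}\lesssim (n+1)C^n$, which is summable against $1/n!$, and the rest of your estimates are routine.
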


\begin{lem}[Changing $b$ in $X_\delta^{s, b}$]
\label{lem:bs_change_b}
(Cf. \cite[Lemma 3.11]{erdogan2016}.)
Let $b, b' \in \left(-\iv{2}, \iv{2} \right)$ with $b' < b$,
$s \in \R$ and $\delta \in (0, 1]$. Then
\begin{equation*}
\norm{w}_{X_\delta^{s, b'}} \lesssim
\delta^{b - b'} \norm{w}_{X_\delta^{s, b}} \qquad \forall w.
\end{equation*}
\end{lem}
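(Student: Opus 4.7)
The plan is the standard extension-and-cutoff argument for restriction norms, which reduces the claim to a one-dimensional Sobolev scaling estimate in the time variable. First, I recall that
$$\norm{w}_{X_\delta^{s,b}} = \inf \set{ \norm{\tilde{w}}_{X^{s,b}} : \tilde{w} \in X^{s,b},\ \tilde{w}|_{\T \times [0,\delta]} = w },$$
so I pick a near-extremizer $\tilde{w}$ with $\norm{\tilde{w}}_{X^{s,b}} \leq 2 \norm{w}_{X_\delta^{s,b}}$. Choosing $\eta \in C_c^\infty(\R)$ with $\eta \equiv 1$ on $[-1,1]$ and $\supp \eta \subset [-2,2]$, the function $\eta(t/\delta) \tilde{w}$ is again an extension of $w$ on $\T \times [0,\delta]$, hence
$$\norm{w}_{X_\delta^{s,b'}} \leq \norm{\eta(t/\delta) \tilde{w}}_{X^{s,b'}},$$
and it suffices to show that multiplication by $\eta(t/\delta)$ acts from $X^{s,b}$ to $X^{s,b'}$ with operator norm $\lesssim \delta^{b-b'}$.

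Next I would decouple the Sobolev weight in $k$ from the modulation weight in $\tau$. Setting $u(\cdot, t) := e^{-\iu t \partial_x^2} \tilde{w}(\cdot, t)$, Plancherel gives
$$\norm{\tilde{w}}_{X^{s,b}}^2 \simeq \sum_{k \in \Z} \jb{k}^{2s} \norm{\hat{u}_k}_{H^b(\R)}^2,$$
where $\hat{u}_k(t)$ denotes the $k$-th Fourier coefficient of $u(\cdot, t)$ on $\T$. Multiplication in time by $\eta(t/\delta)$ commutes with $e^{-\iu t \partial_x^2}$ and preserves each Fourier mode in $x$, so the matter reduces to the scalar scaling estimate
$$\norm{\eta(t/\delta) f}_{H^{b'}(\R)} \lesssim \delta^{b-b'} \norm{f}_{H^b(\R)}, \qquad -\tfrac{1}{2} < b' < b < \tfrac{1}{2},$$
applied to each Fourier coefficient and summed against $\jb{k}^{2s}$.

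The scalar estimate is a classical Sobolev-scaling fact. Its anchor case $b \in (0, 1/2)$, $b' = 0$ follows from Hölder's inequality (with $\norm{\eta(\cdot/\delta)}_{L^{1/b}} \simeq \delta^b$) combined with the Sobolev embedding $H^b(\R) \hookrightarrow L^{2/(1-2b)}(\R)$; the reflected case $b = 0$, $b' \in (-1/2, 0)$ follows by duality since multiplication by the real-valued $\eta(t/\delta)$ is self-adjoint on $L^2$; complex interpolation between these two endpoints then recovers the full range. The only genuine obstacle is that $b, b'$ must stay strictly inside $(-1/2, 1/2)$: the Sobolev embedding $H^b \hookrightarrow L^p$ with $p < \infty$ degenerates at $b = 1/2$, and its dual fails at $b = -1/2$, so outside this range the lemma is false.
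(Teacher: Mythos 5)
Your overall strategy is the standard one (the paper itself does not prove this lemma but cites Erd\u{o}gan--Tzirakis): pass to a near-extremal extension, multiply by $\eta(t/\delta)$, conjugate by the free evolution to decouple the spatial weight, and reduce everything to the one-dimensional estimate $\norm{\eta(t/\delta)f}_{H^{b'}(\R)} \lesssim \delta^{b-b'}\norm{f}_{H^{b}(\R)}$ with a constant uniform in $\delta \in (0,1]$. That reduction is correct, and your two anchor estimates (H\"older plus the Sobolev embedding $H^{b}\hookrightarrow L^{2/(1-2b)}$ for $b'=0$, and its dual for $b=0$) are fine.

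The gap is in the final sentence: complex interpolation between the family $H^{b}\to L^{2}$ (norm $\lesssim\delta^{b}$, $0<b<\tfrac12$) and the family $L^{2}\to H^{b'}$ (norm $\lesssim\delta^{-b'}$, $-\tfrac12<b'<0$) only produces pairs of the form $\bigl((1-\theta)b,\ \theta b'\bigr)$, i.e.\ only the mixed-sign region $b'\le 0\le b$; no interpolation can leave the convex hull of the exponent pairs you start from. The same-sign cases $0<b'<b<\tfrac12$ and $-\tfrac12<b'<b<0$ are therefore not recovered, and the latter is precisely the case the paper uses: in the proof of Theorem \ref{thm:lwp_torus} the lemma is applied with exponents $b-1\in(-\tfrac12,-\tfrac38)$ and $-\tfrac38$, both negative. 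To close the gap you need the diagonal endpoint as well, namely the uniform (in $\delta$) multiplier bound $\norm{\eta(t/\delta)f}_{H^{b}}\lesssim \norm{f}_{H^{b}}$ for $0<b<\tfrac12$, which follows for instance from a fractional Leibniz estimate, since $\norm{|D|^{b}\eta(\cdot/\delta)}_{L^{1/b}}$ is scale-invariant, together with the same Sobolev embedding; interpolating between $H^{b}\to H^{b}$ (norm $O(1)$) and $H^{b}\to L^{2}$ (norm $\delta^{b}$) then gives $0<b'<b$, and duality gives the negative same-sign range. Alternatively, prove the scalar estimate directly by splitting $\hat f$ into modulations $\jb{\tau}\lesssim \delta^{-1}$ and $\jb{\tau}\gtrsim\delta^{-1}$, which handles all of $-\tfrac12<b'<b<\tfrac12$ at once.
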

The next proposition appears in \cite{erdogan2016} for the case of the
cubic nonlinearity and $\varepsilon = 0$. Since we need the
corresponding result for (sub)quadratic nonlinearities which are more
complicated than the algebraic cubic nonlinearity, we present the proof, too.

\begin{prop}[Control of the nonlinearity in $X_\delta^{s, b}$]
\label{prop:bs_nonlinearity_control}
(Cf. \cite[Proposition 3.26]{erdogan2016}.)
Let $s \geq 0$ and $\varepsilon > 0$ or $\varepsilon = s = 0$. Then,
for all $w_1, w_2$ we have
\begin{eqnarray*}
& &
\norm{
\left(\abs{w_1 \ast \phi_\varepsilon}^{\alpha - 1}
(w_1 \ast \phi_\varepsilon)\right)
\ast \phi_\varepsilon -
\left(\abs{w_2 \ast \phi_\varepsilon}^{\alpha - 1}
(w_2 \ast \phi_\varepsilon)\right)
\ast \phi_\varepsilon
}_{X_\delta^{s, -\frac{3}{8}}} \\
& \lesssim_{\varepsilon, s} &
\left(\norm{w_1}^{\alpha - 1}_{X_\delta^{0, \frac{3}{8}}} +
\norm{w_2}^{\alpha - 1}_{X_\delta^{0, \frac{3}{8}}}\right)
\left(\norm{w_1 - w_2}_{X_\delta^{0, \frac{3}{8}}} \right).
\end{eqnarray*}
\end{prop}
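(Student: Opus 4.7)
The plan is to first peel off the outermost $\phi_\varepsilon$ so as to reduce to the case $s=0$, and then use duality against Lemma \ref{lem:bourgain_lebesgue_embedding} to bring the $X_\delta^{0,-3/8}$ norm down to an $L^{4/3}_{t,x}$ norm, at which point the pointwise size estimate and Hölder's inequality finish the argument.

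For the reduction, note that spatial convolution with $\phi_\varepsilon$ acts on the space-time Fourier side as multiplication by the spatial symbol $\hat{\phi_\varepsilon}(k)=e^{-\varepsilon k^2}$. For $\varepsilon>0$ this symbol decays faster than any polynomial in $k$, so $\jb{k}^s\hat{\phi_\varepsilon}(k)$ is bounded by a constant $C(\varepsilon,s)$. Since the time weight $\jb{\tau+k^2}^{-3/8}$ is untouched, we obtain
\begin{equation*}
\norm{F \ast \phi_\varepsilon}_{X_\delta^{s,-3/8}} \lesssim_{\varepsilon,s} \norm{F}_{X_\delta^{0,-3/8}}
\end{equation*}
for every $F$. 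In the remaining case $\varepsilon=s=0$, there is nothing to reduce. Applied to
\begin{equation*}
F := \abs{w_1 \ast \phi_\varepsilon}^{\alpha-1}(w_1 \ast \phi_\varepsilon) - \abs{w_2 \ast \phi_\varepsilon}^{\alpha-1}(w_2 \ast \phi_\varepsilon),
\end{equation*}
this reduces the proposition to the bound $\norm{F}_{X_\delta^{0,-3/8}} \lesssim (\norm{w_1}^{\alpha-1}_{X_\delta^{0,3/8}} + \norm{w_2}^{\alpha-1}_{X_\delta^{0,3/8}})\norm{w_1-w_2}_{X_\delta^{0,3/8}}$.

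For the $s=0$ bound, duality of Lemma \ref{lem:bourgain_lebesgue_embedding} yields the localized embedding $L^{4/3}(\T \times [0,\delta]) \hookrightarrow X_\delta^{0,-3/8}$, hence $\norm{F}_{X_\delta^{0,-3/8}} \lesssim \norm{F}_{L^{4/3}_{t,x}}$. The size estimate (Lemma \ref{lem:size_estimate}) applied with $w=0$ and $v_i = w_i \ast \phi_\varepsilon$ gives pointwise
\begin{equation*}
\abs{F} \lesssim \bigl(\abs{w_1 \ast \phi_\varepsilon}^{\alpha-1} + \abs{w_2 \ast \phi_\varepsilon}^{\alpha-1}\bigr) \abs{(w_1-w_2) \ast \phi_\varepsilon}.
\end{equation*}
Setting $q := 4\alpha/3 \in [4/3, 8/3]$ and applying Hölder in $L^{4/3}_{t,x}$ with exponents $q/(\alpha-1)$ and $q$ reduces matters to controlling $L^{q}_{t,x}$ norms of the three factors (the case $\alpha=1$ is trivial). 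Since $q \leq 4$ and $\T \times [0,\delta]$ has finite measure, $L^4 \hookrightarrow L^q$ on this domain, so Lemma \ref{lem:bourgain_lebesgue_embedding} gives $\norm{g}_{L^q_{t,x}} \lesssim \norm{g}_{X_\delta^{0,3/8}}$. Finally, convolution by $\phi_\varepsilon$ is a Fourier multiplier of symbol $\hat{\phi_\varepsilon}(k)$ with $\abs{\hat{\phi_\varepsilon}(k)} \leq 1$, hence bounded on $X_\delta^{0,3/8}$ uniformly in $\varepsilon \geq 0$. Stringing these estimates together closes the argument.

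The principal technical obstacle is the non-algebraic character of $\abs{z}^{\alpha-1}z$ when $\alpha\in(1,2)$: one cannot expand the difference as a finite-degree polynomial in the $w_i$ and their conjugates, so the standard multilinear trick used for the cubic case in \cite{erdogan2016} is unavailable. This forces reliance on the pointwise size estimate and Hölder with a non-integer exponent $q = 4\alpha/3$, which is then bridged back to the available $L^4$-embedding by exploiting the finite measure of $\T \times [0,\delta]$.
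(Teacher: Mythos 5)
Your proposal is correct and follows essentially the same route as the paper's proof: duality against the $X^{0,\frac{3}{8}}\hookrightarrow L^4$ embedding (Lemma \ref{lem:bourgain_lebesgue_embedding}) to bring the $X_\delta^{s,-\frac{3}{8}}$ norm down to $L^{\frac{4}{3}}$, absorption of $J^s$ together with the outer $\phi_\varepsilon$ via the Gaussian's rapid decay (constants depending on $\varepsilon,s$), then the size estimate, Hölder, the $L^4$ embedding again, and boundedness of convolution with $\phi_\varepsilon$. The only cosmetic difference is the Hölder split of the $L^{\frac{4}{3}}$ product: the paper uses $L^4\times L^2$ (putting the difference factor in $L^2$), whereas you use exponents built from $q=\frac{4\alpha}{3}$ and the finite measure of $\T\times[0,\delta]$ — a reliance the paper shares implicitly when it bounds $\norm{\abs{w_i\ast\phi_\varepsilon}^{\alpha-1}}_{L^4}$ by $\norm{w_i}_{L^4}^{\alpha-1}$.
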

\begin{proof}
Fix $w_1, w_2$. Then, by Plancherel theorem and duality in
$L^2(\R \times \T)$, one has
\begin{eqnarray*}
& & \norm{
\left(\abs{w_1 \ast \phi_\varepsilon}^{\alpha - 1}
(w_1 \ast \phi_\varepsilon)\right)
\ast \phi_\varepsilon -
\left(\abs{w_2 \ast \phi_\varepsilon}^{\alpha - 1}
(w_2 \ast \phi_\varepsilon)\right)
\ast \phi_\varepsilon
}_{X_\delta^{s, -\frac{3}{8}}} \\
& = &
\sup_{\norm{w}_{X_\delta^{-s, \frac{3}{8}}} = 1}
\abs{\dup{
\left(\abs{w_1 \ast \phi_\varepsilon}^{\alpha - 1}
(w_1 \ast \phi_\varepsilon) -
\abs{w_2 \ast \phi_\varepsilon}^{\alpha - 1}
(w_2 \ast \phi_\varepsilon)\right)
\ast \phi_\varepsilon}{w}_{L^2(\R \times \T)}}.
\end{eqnarray*}
Fix any $w \in X_\delta^{-s, \frac{3}{8}}$ with
$\norm{w}_{X_\delta^{-s, \frac{3}{8}}} = 1$. Then
\begin{eqnarray*}
& & \abs{\dup{
\left(\abs{w_1 \ast \phi_\varepsilon}^{\alpha - 1}
(w_1 \ast \phi_\varepsilon) -
\abs{w_2 \ast \phi_\varepsilon}^{\alpha - 1}
(w_2 \ast \phi_\varepsilon)\right)
\ast \phi_\varepsilon
}{w}_{L^2(\R \times \T)}} \\
& = &
\abs{\dup{J^s \left[
\left(\abs{w_1 \ast \phi_\varepsilon}^{\alpha - 1}
(w_1 \ast \phi_\varepsilon) -
\abs{w_2 \ast \phi_\varepsilon}^{\alpha - 1}
(w_2 \ast \phi_\varepsilon)\right)
\ast \phi_\varepsilon
\right]}{J^{-s} w}_{L^2(\R \times \T)}} \\
& \leq &
\norm{
\left(\abs{w_1 \ast \phi_\varepsilon}^{\alpha - 1}
(w_1 \ast \phi_\varepsilon) -
\abs{w_2 \ast \phi_\varepsilon}^{\alpha - 1}
(w_2 \ast \phi_\varepsilon)\right)
\ast (J^s \phi_\varepsilon)
}_{L^\frac{4}{3}(\R \times \T)} \\
& & \cdot
\norm{J^{-s} w}_{L^4(\R \times \T)} \\
& \lesssim_{\varepsilon, s} &
\norm{\abs{w_1 \ast \phi_\varepsilon}^{\alpha - 1}
(w_1 \ast \phi_\varepsilon) -
\abs{w_2 \ast \phi_\varepsilon}^{\alpha - 1}
(w_2 \ast \phi_\varepsilon)
}_{L^\frac{4}{3}(\R \times \T)}
\underbrace{\norm{J^{-s} w}_{X_\delta^{0, \frac{3}{8}}}
}_{= \norm{w}_{X_\delta^{-s, \frac{3}{8}}} = 1} \\
& \leq & \norm{
\abs{w_1 \ast \phi_\varepsilon}^{\alpha - 1}
(w_1 \ast \phi_\varepsilon) -
\abs{w_2 \ast \phi_\varepsilon}^{\alpha - 1}
(w_2 \ast \phi_\varepsilon)
}_{L^\frac{4}{3}(\R \times \T)} \\
& \leq &
\norm{\left(
\abs{w_1 \ast \phi_\varepsilon}^{\alpha - 1} +
\abs{w_1 \ast \phi_\varepsilon}^{\alpha - 1} \right)
((w_1 - w_2) \ast \phi_\varepsilon)
}_{L^\frac{4}{3}(\R \times \T)},
\end{eqnarray*}
where, for the first estimate, we used Hölder’s inequality and
Young’s inequality, Lemma \ref{lem:bourgain_lebesgue_embedding}
for the second and the size estimate \eqref{eqn:size_estimate} for the
last inequality. Applying Hölder’s inequality again yields the upper
bound
\begin{equation*}
\left(\norm{
\abs{w_1 \ast \phi_\varepsilon}^{\alpha - 1}}_{L^4(\R \times \T)} +
\norm{
\abs{w_1 \ast \phi_\varepsilon}^{\alpha - 1}}_{L^4(\R \times \T)}
\right)
\norm{
(w_1 - w_2) \ast \phi_\varepsilon
}_{L^2(\R \times \T)}.
\end{equation*}
For the first factor, we apply Hölder’s and Young’s inequalities as well
as the embedding from Lemma \ref{lem:bourgain_lebesgue_embedding} and
arrive at the upper bound of
\begin{equation*}
\norm{w_1}_{L^4(\R \times \T)}^{\alpha -  1} + 
\norm{w_2}_{L^4(\R \times \T)}^{\alpha -  1}
\lesssim
\norm{w_1}_{X^{0, \frac{3}{8}}}^{\alpha -  1} + 
\norm{w_2}_{X^{0, \frac{3}{8}}}^{\alpha -  1}.
\end{equation*}
For the second factor we use Young’s inequality and the definition of
the norm in $X^{0, \frac{3}{8}}$ to arrive at the final estimate
\begin{equation*}
\Big(\norm{w_1}_{X^{0, \frac{3}{8}}}^{\alpha -  1} + 
\norm{w_2}_{X^{0, \frac{3}{8}}}^{\alpha -  1}\Big)\norm{w_1 - w_2}_{X^{0, \frac{3}{8}}}.
\end{equation*}
\end{proof}

\subsection{Results}
First, we consider local wellposedness:
\begin{thm}
\label{thm:lwp_torus}
(Cf. \cite[Theorem 3.27]{erdogan2016} for the cubic NLS.)
Let $\varepsilon > 0$ and $s \geq 0$ or $\varepsilon = s = 0$. Then the
(smoothened) (sub)quadratic NLS \eqref{eqn:cauchy_nls_smooth_T} is \emph{locally}
well-posed in $H^s(\T)$.
\end{thm}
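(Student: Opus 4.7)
The plan is to apply Banach's fixed-point theorem to the Duhamel map $\opT$ from \eqref{eqn:duhamel_nls_smooth_T} on the closed ball
\begin{equation*}
M(R,\delta) \coloneqq \set{w \in X_\delta^{s,b} : \norm{w}_{X_\delta^{s,b}} \leq R},
\end{equation*}
where I fix the Bourgain regularity $b = \tfrac{1}{2} + \eta$ for some $\eta \in (0, \tfrac{1}{8})$, and $R, \delta \in (0,1]$ are to be determined. This choice of $b$ sits just above $\tfrac{1}{2}$, as demanded by the integral estimate of Lemma \ref{lem:bs_integral_estimate}, while still allowing Lemma \ref{lem:bs_change_b} to bridge to the level $b = \tfrac{3}{8}$ that appears in the nonlinear estimate of Proposition \ref{prop:bs_nonlinearity_control}.

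For the self-mapping property, the linear part is controlled by Lemmas \ref{lem:bs_linear_estimate} and \ref{lem:ft_heat_one}:
\begin{equation*}
\norm{e^{\iu t \partial_x^2}(w_0 \ast \phi_\varepsilon)}_{X_\delta^{s, b}} \lesssim \norm{w_0 \ast \phi_\varepsilon}_{H^s(\T)} \leq \norm{w_0}_{H^s(\T)},
\end{equation*}
which suggests $R \approx \norm{w_0}_{H^s(\T)}$. For the integral part, Lemma \ref{lem:bs_integral_estimate} (with $b-1 = -\tfrac{1}{2} + \eta$) reduces the problem to estimating the nonlinearity in $X_\delta^{s, b-1}$; since both $-\tfrac{1}{2} + \eta$ and $-\tfrac{3}{8}$ lie in $(-\tfrac{1}{2}, \tfrac{1}{2})$ with the former strictly smaller, Lemma \ref{lem:bs_change_b} produces a gain of $\delta^{\frac{1}{8}-\eta}$ and passes the estimate to $X_\delta^{s, -3/8}$. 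Proposition \ref{prop:bs_nonlinearity_control}, applied with $w_2 = 0$, then yields
\begin{equation*}
\norm{\left(\abs{w \ast \phi_\varepsilon}^{\alpha - 1}(w \ast \phi_\varepsilon)\right) \ast \phi_\varepsilon}_{X_\delta^{s, -3/8}} \lesssim_{\varepsilon, s} \norm{w}_{X_\delta^{0, 3/8}}^\alpha \leq \norm{w}_{X_\delta^{s, b}}^\alpha,
\end{equation*}
where I have used $s \geq 0$ to drop the spatial weight and the pointwise monotonicity $\jb{\tau + k^2}^{3/8} \leq \jb{\tau + k^2}^{b}$ (applied to any extension and then taking an infimum) to pass from the $X_\delta^{0, 3/8}$-norm to the $X_\delta^{s, b}$-norm.

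Combining these estimates gives
\begin{equation*}
\norm{\opT(w)}_{X_\delta^{s, b}} \lesssim_{\varepsilon,s} \norm{w_0}_{H^s(\T)} + \delta^{\frac{1}{8}-\eta}\, R^\alpha,
\end{equation*}
and the analogous computation using the full statement of Proposition \ref{prop:bs_nonlinearity_control} produces the contraction estimate
\begin{equation*}
\norm{\opT(w_1) - \opT(w_2)}_{X_\delta^{s, b}} \lesssim_{\varepsilon, s} \delta^{\frac{1}{8} - \eta} \bigl(\norm{w_1}_{X_\delta^{s, b}}^{\alpha - 1} + \norm{w_2}_{X_\delta^{s, b}}^{\alpha - 1}\bigr) \norm{w_1 - w_2}_{X_\delta^{s, b}}.
\end{equation*}
Choosing $R \approx \norm{w_0}_{H^s(\T)}$ and then $\delta$ sufficiently small (depending on $R$, $\varepsilon$, $s$) makes $\opT$ a contractive self-map of $M(R, \delta)$; the Banach fixed-point theorem furnishes the unique $w \in X_\delta^{s, b}$ solving \eqref{eqn:duhamel_nls_smooth_T}, and Lemma \ref{lem:bs_sobolev_embedding} promotes the fixed point to $C([0, \delta], H^s(\T))$. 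Continuous dependence on the initial data is obtained by a standard difference estimate of the same form.

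The main obstacle is purely bookkeeping in the Bourgain index: the integral estimate demands $b > \tfrac{1}{2}$ while Proposition \ref{prop:bs_nonlinearity_control} is anchored at $b = \tfrac{3}{8}$. The choice $b = \tfrac{1}{2} + \eta$ with $\eta < \tfrac{1}{8}$ places $b - 1 = -\tfrac{1}{2} + \eta$ and $-\tfrac{3}{8}$ both inside the admissible range $(-\tfrac{1}{2}, \tfrac{1}{2})$ of Lemma \ref{lem:bs_change_b}, which simultaneously resolves the mismatch and supplies the small factor $\delta^{\frac{1}{8} - \eta}$ that drives the contraction. The case $\varepsilon = s = 0$ is identical: Proposition \ref{prop:bs_nonlinearity_control} applies with an absolute constant and every other step is $\varepsilon$- and $s$-independent.
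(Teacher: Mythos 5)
Your proof is correct and follows essentially the same route as the paper: a fixed-point argument in $X_\delta^{s,b}$ with $b\in(\tfrac12,\tfrac58)$ (your $b=\tfrac12+\eta$, $\eta<\tfrac18$, is exactly this window), combining Lemmas \ref{lem:bs_linear_estimate}, \ref{lem:bs_integral_estimate}, \ref{lem:bs_change_b} and Proposition \ref{prop:bs_nonlinearity_control}, with Lemma \ref{lem:bs_sobolev_embedding} giving the embedding into $C([0,\delta],H^s(\T))$. The only (harmless) difference is that you run the Bourgain-space argument uniformly for all $s\geq 0$, whereas the paper treats $s\geq 1$ separately by a simpler energy-space contraction and reserves the $X^{s,b}$ machinery for $s\in[0,1)$ and $\varepsilon=s=0$.
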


\begin{proof}
It suffices to show that the right-hand side of
\eqref{eqn:duhamel_nls_smooth_T} defines a contractive self-mapping
$\opT: M(R, \delta) \to M(R, \delta)$ for some $R, \delta > 0$, where
\begin{equation*}
M(R, \delta) \coloneqq
\set{w \in Y \big| \, \norm{w}_Y \leq R}
\end{equation*}
and $Y$ is a suitable subspace of $C([0, \delta], H^s(\T))$.

We consider the case $s \geq 1$ first. Put
$Y = C([0, \delta], H^s(\T))$. Due to $e^{\iu t \partial_{x}^{2}}$ being
an isometry on $H^s(\T)$, for any
$t \in \R$, and Lemma \ref{lem:ft_heat_one} we have
\begin{eqnarray*}
& & \norm{\opT w}_Y \\
& \leq &
\norm{e^{\iu t \partial_{x}^{2}} (w_0 \ast \phi_\varepsilon)}_{H^s(\T)} +
\norm{\int_0^t e^{\iu (t - \tau) \partial_{x}^{2}}
\left[
\left(\abs{w \ast \phi_\varepsilon}^{\alpha - 1} (w \ast \phi_\varepsilon)\right)
\ast \phi_\varepsilon
\right] \D{\tau}}_Y \\
& \leq &
\norm{w_0}_{H^s(\T)} +
\delta 
\norm{(\abs{w \ast \phi_\varepsilon}^{\alpha - 1}
(w \ast \phi_\varepsilon)) \ast \phi_\varepsilon}_Y.
\end{eqnarray*}
This suggests the choice $R \approx \norm{w_0}_{H^s}$. Fix
$\tau \in [0, \delta]$. Then,
due to Lemma \ref{lem:ft_heat_one} and the embedding
$H^s(\T) \hookrightarrow L^\infty(\T)$, we have that
\begin{eqnarray*}
& & \norm{((\abs{w \ast \phi_\varepsilon}^{\alpha - 1}
(w \ast \phi_\varepsilon)) \ast \phi_\varepsilon)(\cdot, \tau)
}_{H^s(\T)} \\
& \lesssim_{\varepsilon, s} &
\norm{(\abs{w \ast \phi_\varepsilon}^{\alpha - 1}
(w \ast \phi_\varepsilon))(\cdot, \tau)}_{L^2(\T)} \\
& \leq &
\norm{(w \ast \phi_\varepsilon)(\cdot, \tau)
}_{L^\infty(\T)}^{\alpha - 1}
\norm{(w \ast \phi_\varepsilon)(\cdot, \tau)}_{L^2(\T)} \\
& \lesssim & R^\alpha.
\end{eqnarray*}
By the above, the condition $\norm{\opT w}_Y \leq R$ is satisfied, if
$\delta \lesssim_{\varepsilon, s} R^{1 - \alpha}$. The contraction property of
$\opT$ is shown in the same way, possibly requiring a smaller implicit
constant in the last inequality.

In the case $s \in [0, 1)$ and $\varepsilon > 0$, consider any
$b \in \left(\iv{2}, \frac{5}{8} \right)$ and put
$Y = X_{\delta}^{s, b}$ (by Lemma \ref{lem:bs_sobolev_embedding} one
indeed has $Y \hookrightarrow C([0, \delta], H^s(\T))$). Then, by the
triangle inequality and Lemmata
\ref{lem:bs_linear_estimate} and \ref{lem:bs_integral_estimate} we have
\begin{eqnarray*}
& & \norm{\opT w}_{X_{\delta}^{s, b}} \\
& \leq &
\norm{e^{\iu t \partial_{x}^{2}} (w_0 \ast \phi_\varepsilon)}_{X_{\delta}^{s, b}} +
\norm{\int_0^t e^{\iu (t - \tau) \partial_x^2} \left[
\left(
\abs{w \ast \phi_\varepsilon}^{\alpha - 1}
(w \ast \phi_\varepsilon) \right)
\ast \phi_\varepsilon \right]
\D{\tau}}_{X_{\delta}^{s, b}} \\
& \lesssim &
\norm{w_0}_{H^s(\T)} +
\norm{\left(\abs{w \ast \phi_\varepsilon}^{\alpha - 1}
(w \ast \phi_\varepsilon)\right)
\ast \phi_\varepsilon}_{X_\delta^{s, b - 1}}.
\end{eqnarray*}
This estimate suggests $R \approx \norm{w_0}_{H^s(\T)}$. For the second
summand, apply Lemma \ref{lem:bs_change_b} and Proposition
\ref{prop:bs_nonlinearity_control} (with $w = 0$) to obtain the upper
bound
\begin{eqnarray}
\nonumber
\norm{
\left(\abs{w \ast \phi_\varepsilon}^{\alpha - 1}
(w \ast \phi_\varepsilon)\right)
\ast \phi_\varepsilon}_{X_\delta^{s, b - 1}}
& \lesssim &
\delta^{1 - b - \frac{3}{8}}
\norm{
\left(\abs{w \ast \phi_\varepsilon}^{\alpha - 1}
(w \ast \phi_\varepsilon)\right)
\ast \phi_\varepsilon}_{X_\delta^{s, -\frac{3}{8}}} \\
\label{eqn:bourgain_lebesgue_control}
& \lesssim &
\delta^{1 - b - \frac{3}{8}}
\norm{w}_{X_\delta^{0, \frac{3}{8}}}^{\alpha - 1}
\norm{w}_{X_\delta^{s, \frac{3}{8}}} \\
\nonumber
& \leq &
\delta^{1 - b - \frac{3}{8}}
\norm{w}_{X_\delta^{s, \frac{3}{8}}}^\alpha \leq
\delta^{1 - b - \frac{3}{8}} R^\alpha.
\end{eqnarray}
As the exponent of $\delta$ is positive, we can choose $\delta$ small
enough to make $\opT$ a self-mapping of $M(R, \delta)$. The fact that
$\opT$ is contractive is proven similarly, possibly requiring a smaller
$\delta$.

The remaining case $\varepsilon = s = 0$ is treated exactly as the last
case.
\end{proof}

In order to prove the conservation laws, we need to be able to
approximate by smooth solutions.
\begin{lem}[Smooth solutions for smooth initial data]
\label{lem:smoothness_smooth_nls_T}
(Cf. \cite[Proposition 3.11]{tao2006}.)
Let $\varepsilon > 0$, and $w_0 \in L^2(\T)$ and let $w$ denote the
unique solution of \eqref{eqn:duhamel_nls_smooth_T}. Then
$w \in C([0, \delta], H^\infty(\R))$ and for any $s > \iv{2}$ one has
\begin{equation}
\label{eqn:gronwall_nls_smooth}
\norm{w}_{L_t^\infty H_x^s} \leq
C \norm{w_0}_{L^2}
e^{Ct \norm{w}^{\alpha - 1}_{L_t^\infty H^1}}
\end{equation}
for some $C = C(\varepsilon, s) > 0$.
\end{lem}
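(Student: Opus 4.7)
The plan closely mirrors that of Lemma \ref{lem:smoothness_modnls}: first propagate regularity from the (now automatically smooth) initial datum $w_{0}\ast\phi_{\varepsilon}$, then obtain the announced exponential bound through the integral form of Gronwall's inequality.

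\textbf{Step 1: regularity of the initial datum and local $H^{s}$ solution.} Since $\varepsilon>0$, Lemma \ref{lem:ft_heat_one} yields $w_{0}\ast\phi_{\varepsilon}\in H^{\infty}(\T)$ with $\|w_{0}\ast\phi_{\varepsilon}\|_{H^{s}(\T)}\lesssim_{\varepsilon,s}\|w_{0}\|_{L^{2}(\T)}$ for every $s\geq 0$. For any fixed $s\geq 1$, Theorem \ref{thm:lwp_torus} then delivers a local solution in $C([0,\delta'],H^{s}(\T))$ of \eqref{eqn:cauchy_nls_smooth_T}, which by $L^{2}$-uniqueness coincides with the $L^{2}$-solution $w$ on the common time interval.

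\textbf{Step 2: the key nonlinear estimate.} Abbreviate $N(w)\coloneqq(|w\ast\phi_{\varepsilon}|^{\alpha-1}(w\ast\phi_{\varepsilon}))\ast\phi_{\varepsilon}$. The smoothing provided by $\phi_{\varepsilon}$ is the engine of the argument: by Lemma \ref{lem:ft_heat_one},
\begin{equation*}
\|N(w)\|_{H^{s}(\T)}\lesssim_{\varepsilon,s}\bigl\||w\ast\phi_{\varepsilon}|^{\alpha-1}(w\ast\phi_{\varepsilon})\bigr\|_{L^{2}(\T)}\leq\|w\ast\phi_{\varepsilon}\|_{L^{\infty}}^{\alpha-1}\|w\ast\phi_{\varepsilon}\|_{L^{2}}.
\end{equation*}
By the Sobolev embedding $H^{1}(\T)\hookrightarrow L^{\infty}(\T)$ and Lemmas \ref{lem:transference}, \ref{lem:ft_heat_one} this is $\lesssim_{\varepsilon,s}\|w\|_{H^{1}}^{\alpha-1}\|w\|_{H^{s}}$ (bounding $\|w\ast\phi_{\varepsilon}\|_{L^{2}}\leq\|w\|_{L^{2}}\leq\|w\|_{H^{s}}$ to prepare for Gronwall).

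\textbf{Step 3: Gronwall and conclusion of \eqref{eqn:gronwall_nls_smooth}.} Starting from the Duhamel identity \eqref{eqn:duhamel_nls_smooth_T} and using that $e^{\iu t\partial_{x}^{2}}$ is an $H^{s}$-isometry, the estimate from Step 2 yields
\begin{equation*}
\|w(\cdot,t)\|_{H^{s}}\leq C_{\varepsilon,s}\|w_{0}\|_{L^{2}}+C_{\varepsilon,s}\int_{0}^{t}\|w(\cdot,\tau)\|_{H^{1}}^{\alpha-1}\|w(\cdot,\tau)\|_{H^{s}}\D\tau.
\end{equation*}
Lemma \ref{lem:gronwall_integral} with $A=C_{\varepsilon,s}\|w_{0}\|_{L^{2}}$, $u(t)=\|w(\cdot,t)\|_{H^{s}}$ and $B(\tau)=C_{\varepsilon,s}\|w(\cdot,\tau)\|_{H^{1}}^{\alpha-1}$ produces exactly \eqref{eqn:gronwall_nls_smooth}. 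Because the right-hand side is finite as long as $\|w\|_{L^{\infty}_{t}H^{1}}$ is, the standard continuation/blow-up alternative for the $H^{s}$-solution rules out finite-time blow-up of the $H^{s}$-norm on $[0,\delta]$, so the $H^{s}$-solution extends to all of $[0,\delta]$. Iterating for every $s>\iv{2}$ gives $w\in C([0,\delta],H^{\infty}(\T))$.

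\textbf{Step 4: time regularity.} Once spatial smoothness is established and the inhomogeneity $N(w)$ lies in $C([0,\delta],H^{s}(\T))$ for every $s$, the classical semigroup result \cite[Theorem 4.2.4]{pazy1992}, invoked exactly as in Lemma \ref{lem:smoothness_modnls}, upgrades $w$ to $C^{1}([0,\delta],H^{\infty}(\T))$. The main technical point is therefore the $H^{s}$-estimate for $N(w)$ in Step 2; the smoothing by $\phi_{\varepsilon}$ is what allows us to absorb the non-algebraic, non-smooth nonlinearity $|\cdot|^{\alpha-1}(\cdot)$ entirely into an $L^{2}$-norm before paying any derivatives.
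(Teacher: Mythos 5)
Your Steps 2--3 are exactly the paper's proof: bound the Duhamel integrand by $\norm{(\abs{w \ast \phi_\varepsilon}^{\alpha-1}(w \ast \phi_\varepsilon)) \ast \phi_\varepsilon}_{H^s} \lesssim_{\varepsilon,s} \norm{w \ast \phi_\varepsilon}_{L^\infty}^{\alpha-1}\norm{w \ast \phi_\varepsilon}_{L^2} \lesssim \norm{w}_{H^1}^{\alpha-1}\norm{w}_{H^s}$ using Lemma \ref{lem:ft_heat_one}, Hölder and the embedding $H^1(\T)\hookrightarrow L^\infty(\T)$, and then apply the integral form of Gronwall's inequality; this gives \eqref{eqn:gronwall_nls_smooth} correctly.

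The one soft spot is your continuation argument for the claim $w \in C([0,\delta],H^\infty)$. You extend the $H^s$-solution to all of $[0,\delta]$ by saying the Gronwall bound is finite ``as long as $\norm{w}_{L^\infty_t H^1}$ is,'' but the hypotheses only give $w_0 \in L^2(\T)$, so finiteness of $\norm{w}_{L^\infty([0,\delta],H^1)}$ is not available at this stage; applying your own estimate with $s=1$ only yields the nonlinear inequality $u(t) \leq A + C\int_0^t u(\tau)^\alpha \D{\tau}$, which does not by itself preclude blow-up before $\delta$. This matters because in Theorem \ref{thm:nls_smooth_global_hs} the lemma's regularity conclusion is invoked \emph{before} any a priori $H^1$ bound (it is needed to justify the energy conservation that produces that bound), so leaning on $\norm{w}_{L^\infty_t H^1}<\infty$ risks circularity. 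The fix is easy and uses the smoothing more aggressively: for $\varepsilon>0$, Young's inequality gives $\norm{w \ast \phi_\varepsilon}_{L^\infty} \lesssim_\varepsilon \norm{w}_{L^2}$, whence $\norm{(\abs{w \ast \phi_\varepsilon}^{\alpha-1}(w \ast \phi_\varepsilon)) \ast \phi_\varepsilon}_{H^s} \lesssim_{\varepsilon,s} \norm{w}_{L^2}^{\alpha}$; since $w \in C([0,\delta],L^2(\T))$ by Theorem \ref{thm:lwp_torus}, the Duhamel formula then shows directly that $w \in C([0,\delta],H^s(\T))$ for every $s$, with no $H^1$ input needed. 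With that adjustment your argument is complete, and Step 4 (time regularity via \cite[Theorem 4.2.4]{pazy1992}) is fine.
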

\begin{proof}
As $w$ is the solution to \eqref{eqn:duhamel_nls_smooth_T}, one
immediately has
\begin{eqnarray*}
\norm{w(\cdot, t)}_{H^s} & \leq &
\norm{w_0 \ast \phi_\varepsilon}_{H^s} +
\int_0^t
\norm{(\abs{w \ast \phi_\varepsilon}^{\alpha - 1}
(w \ast \phi_\varepsilon)) \ast \phi_\varepsilon}_{H^s} \D{\tau} \\
& \lesssim_{\varepsilon, s} &
\norm{w_0}_{L^2} +
\int_0^t
\norm{\abs{w \ast \phi_\varepsilon}^{\alpha - 1}
(w \ast \phi_\varepsilon)}_{L^2} \D{\tau} \\
& \leq &
\norm{w_0}_{L^2} +
\norm{w \ast \phi_\varepsilon}_{L^\infty_t L^\infty_x}^{\alpha - 1}
\int_0^t \norm{w \ast \phi_\varepsilon}_{L^2} \D{\tau} \\
& \lesssim &
\norm{w_0}_{L^2} +
\norm{w}_{L^\infty_t H^1}^{\alpha - 1}
\int_0^t \norm{w}_{H^s} \D{\tau}.
\end{eqnarray*}
Now \eqref{eqn:gronwall_nls_smooth} follows from Lemma
\ref{lem:gronwall_integral}.
\end{proof}

\begin{thm}
\label{thm:nls_smooth_global_hs}
Let $\varepsilon > 0$ and $s \in [1, \infty)$. Then the smoothened NLS
\eqref{eqn:cauchy_nls_smooth_T} is \emph{globally} well-posed in
$H^s(\T)$.
\end{thm}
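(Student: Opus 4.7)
The plan is to extract a global a priori bound in $H^1(\T)$ from conservation of mass and energy, and then combine it with the local theory (Theorem \ref{thm:lwp_torus}) and the regularity-propagation estimate (Lemma \ref{lem:smoothness_smooth_nls_T}) to obtain global existence in $H^s$ for every $s \geq 1$.

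First, I would introduce the mass $M(w) \coloneqq \tfrac{1}{2}\norm{w}_2^2$ and the regularized energy
\begin{equation*}
E(w) \coloneqq \tfrac{1}{2}\norm{\partial_x w}_2^2 \mp \tfrac{1}{\alpha+1}\norm{w \ast \phi_\varepsilon}_{\alpha+1}^{\alpha+1}.
\end{equation*}
Since $\phi_\varepsilon$ is real and even, convolution with it is self-adjoint on $L^2(\T)$; a direct computation using \eqref{eqn:cauchy_nls_smooth_T} together with the identity $\Re \dup{w_t}{\iu w_t} = 0$ then gives $\frac{\D}{\D t} M = \frac{\D}{\D t} E = 0$ along any solution which is differentiable in time. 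To extend these identities to an arbitrary $w_0 \in H^1(\T)$, I would approximate $w_0$ by $w_0^n \in H^\infty(\T)$ in $H^1$, invoke Lemma \ref{lem:smoothness_smooth_nls_T} to obtain corresponding smooth solutions $w^n$, and pass to the limit using the continuous dependence on initial data built into the contraction mapping proof of Theorem \ref{thm:lwp_torus}.

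Next, I would convert the two conservation laws into a uniform-in-time bound on $\norm{w(\cdot, t)}_{H^1(\T)}$. In the defocusing case ($+$ sign) the positive terms in $M$ and $E$ directly control $\norm{w}_{H^1}^2$. In the focusing case I would employ Gagliardo–Nirenberg on $\T$ combined with Lemma \ref{lem:ft_heat_one}: setting $p \coloneqq \alpha+1 \in [2, 3]$, one obtains
\begin{equation*}
\norm{w \ast \phi_\varepsilon}_p^p \lesssim \norm{w}_2^{(\alpha+3)/2} \norm{\partial_x w}_2^{(\alpha-1)/2} + \norm{w}_2^{\alpha+1},
\end{equation*}
and since $(\alpha-1)/2 \leq 1/2 < 2$, Young's inequality absorbs the first term into $\tfrac{1}{2}\norm{\partial_x w}_2^2$, leaving $\norm{w(\cdot, t)}_{H^1} \leq C(M(w_0), E(w_0))$ uniformly in $t$.

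With this global $H^1$ bound at hand, the guaranteed existence time in Theorem \ref{thm:lwp_torus} with $s = 1$ depends only on $\norm{w(\cdot, t_0)}_{H^1}$, so iterating the local theory on successive time intervals of that length yields global existence in $H^1(\T)$. For $s > 1$, the Gronwall bound \eqref{eqn:gronwall_nls_smooth} from Lemma \ref{lem:smoothness_smooth_nls_T} gives
\begin{equation*}
\norm{w}_{L^\infty_{[0,T]} H^s_x} \lesssim_{\varepsilon, s} \norm{w_0}_{L^2} \exp\!\bigl(C T \norm{w}_{L^\infty_{[0,T]} H^1}^{\alpha - 1}\bigr)
\end{equation*}
on any interval of $H^1$-existence; the already established globally finite $H^1$-bound then rules out blow-up of the $H^s$-norm in finite time, so the solution extends globally in $H^s$. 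The main obstacle is the focusing case of the $H^1$ estimate: one must verify that the smoothing does not spoil the $L^2$-subcritical Gagliardo–Nirenberg balance, which works precisely because $\alpha \leq 2$ (equivalently $p \leq 3$) and is what breaks down for larger exponents.
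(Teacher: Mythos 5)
Your proposal is correct and follows essentially the same route as the paper: mass and energy conservation for the regularized equation, an a priori $H^1$ bound (direct in the defocusing case, via the mass-subcritical Gagliardo--Nirenberg inequality and mass conservation in the focusing case, exploiting $\alpha \leq 2$), iteration of the local theory of Theorem \ref{thm:lwp_torus}, and the Gronwall estimate \eqref{eqn:gronwall_nls_smooth} of Lemma \ref{lem:smoothness_smooth_nls_T} to propagate the bound to $H^s$ for $s>1$. The only (harmless) redundancy is your approximation of $w_0$ by smooth data to justify the conservation laws: since $\varepsilon>0$, Lemma \ref{lem:smoothness_smooth_nls_T} already gives $w \in C^1([0,\delta],H^1(\T))$ for arbitrary $w_0 \in L^2(\T)$, so the conservation identities can be verified directly on the solution itself, as the paper does.
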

\begin{proof}
Local well-posedness has already been shown in Theorem
\ref{thm:lwp_torus} and it remains to show that the solution $w$ exists
globally. By the blow-up alternative, it suffices to see that
$\norm{w(\cdot, t)}_{H^s(\T)}$ cannot explode. Moreover, by Lemma
\ref{lem:smoothness_smooth_nls_T} it suffices to consider $s = 1$. By
the same lemma, one has that
$w \in C([0, \delta], H^\infty(\T))$ and in particular,
$w \in C^1([0, \delta], H^1(\T))$. Hence, the energy
conservation (cf. \cite[Equations (3.14) and (1.18)]{ginibre1979})
\begin{equation}
\label{eqn:energy}
E_\varepsilon(w(\cdot, t)) \coloneqq
\int_{\T}
\iv{2} \abs{\nabla w(x, t)}^2 \mp
\iv{\alpha + 1}
\abs{(w \ast \phi_\varepsilon) (x, t)}^{\alpha + 1} \D{x} =
E_\varepsilon(w_0 \ast \phi_\varepsilon)
\end{equation}
is applicable to $w$. But
\begin{equation}
\label{eqn:homogeneous_h1_energy}
\norm{w(\cdot, t)}_{H^1(\T)}^2 =
\norm{w_0}_2^2 +
2 E_\varepsilon(w_0 \ast \phi_\varepsilon) \pm
\frac{2}{\alpha + 1} \norm{w(\cdot, t)}_{\alpha + 1}^{\alpha + 1}
\end{equation}
and so $\norm{w(\cdot, t)}_{\dot{H}^1(\T)}$ is controlled by
$E_\varepsilon(w_0 \ast \phi_\varepsilon)$ in the defocusing case. In
the focusing case we can
assume w.l.o.g. that $\norm{w(\cdot, t)}_{\dot{H}^1(\T)}^2$ is an
unbounded function of $t$, (otherwise, there is nothing to show) and say
that $\norm{w(\cdot, t)}_{\dot{H}^1(\T)}^2$ is large. Then, by the
Gagliardo-Nirenberg inequality from \cite[Chapter 8, Eqn. (42)]
{brezis2011}, we have
\begin{equation}
\label{eqn:app_gn}
\norm{w(\cdot, t)}_{\alpha + 1}^{\alpha + 1} \lesssim
\norm{w(\cdot, t)}_2^{\frac{\alpha + 3}{2}}
\norm{w(\cdot, t)}_{H^1(\T)}^{\frac{\alpha - 1}{2}} \leq
\iv{2} \norm{w(\cdot, t)}_{H^1(\T)}^2,
\end{equation}
where above we additionally used the mass conservation
\begin{equation*}
\norm{w(\cdot, t)}_{L^2(\T)} = \norm{w(\cdot, 0)}_{L^2(\T)}.
\end{equation*}
Hence, inserting \eqref{eqn:app_gn} into
\eqref{eqn:homogeneous_h1_energy} and rearranging the inequality shows
that the quantity $\norm{w(\cdot, t)}_{H^1(\T)}^2$ is bounded, in contradiction
to the assumption. This completes the proof.
\end{proof}

\begin{thm}
\label{thm:gwp_nls_T_l2}
(Cf. \cite[Theorem 3.28]{erdogan2016} for the cubic NLS.) The Cauchy
problem for the (sub)quadratic periodic NLS (\eqref{eqn:cauchy_nls_torus}
with $\alpha \in [1, 2]$) is \emph{globally} well-posed in $L^2(\T)$ and
the solution $w$ enjoys mass conservation
$\norm{w(\cdot, t)}_{L^2(\T)} = \norm{w_0}_{L^2(\T)}$.
\end{thm}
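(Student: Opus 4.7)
The plan is to establish mass conservation $\norm{w(\cdot,t)}_{L^2(\T)} = \norm{w_0}_{L^2(\T)}$ for the $L^2$-solution $w$ from Theorem~\ref{thm:lwp_torus} (case $\varepsilon = s = 0$) by approximation with smooth solutions of the mollified equation~\eqref{eqn:cauchy_nls_smooth_T}, and then to deduce global existence via the blow-up alternative. Since the local existence time in Theorem~\ref{thm:lwp_torus} at $s=0$ depends only on $\norm{w_0}_{L^2(\T)}$, mass conservation immediately upgrades the local solution to a global one by iteration.

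Fix $w_0 \in L^2(\T)$, and for each $\varepsilon > 0$ let $w^\varepsilon$ denote the global $H^1$-solution of \eqref{eqn:cauchy_nls_smooth_T} with initial datum $w_0 \ast \phi_\varepsilon$, provided by Theorem~\ref{thm:nls_smooth_global_hs}. Since $w_0 \ast \phi_\varepsilon \in H^\infty(\T)$, Lemma~\ref{lem:smoothness_smooth_nls_T} upgrades this to $w^\varepsilon \in C^1([0,T], H^\infty(\T))$ for all $T > 0$, so differentiation in time is legitimate. One computes
\begin{equation*}
\iv{2} \frac{\D}{\D{t}} \norm{w^\varepsilon}_{2}^{2}
= \Re \dup{\iu \partial_x^2 w^\varepsilon \pm \iu (\abs{w^\varepsilon \ast \phi_\varepsilon}^{\alpha - 1}(w^\varepsilon \ast \phi_\varepsilon)) \ast \phi_\varepsilon}{w^\varepsilon}.
\end{equation*}
The Laplacian contribution is purely imaginary (integration by parts) and hence has zero real part. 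For the nonlinear contribution I exploit that the periodised heat kernel is real and even, so that $\dup{f \ast \phi_\varepsilon}{g} = \dup{f}{g \ast \phi_\varepsilon}$; transferring one convolution onto $w^\varepsilon$ yields
\begin{equation*}
\dup{\abs{w^\varepsilon \ast \phi_\varepsilon}^{\alpha - 1}(w^\varepsilon \ast \phi_\varepsilon)}{w^\varepsilon \ast \phi_\varepsilon}
= \int_{\T} \abs{w^\varepsilon \ast \phi_\varepsilon}^{\alpha + 1} \D{x} \in \R,
\end{equation*}
and multiplying by $\pm \iu$ and taking real parts produces zero. Hence $\norm{w^\varepsilon(\cdot, t)}_2 = \norm{w_0 \ast \phi_\varepsilon}_2 \leq \norm{w_0}_2$ for every $t \geq 0$.

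It remains to pass to the limit $\varepsilon \to 0^+$. Fix $b \in (\iv{2}, \tfrac{5}{8})$. A close inspection of the proof of Theorem~\ref{thm:lwp_torus} at $s = 0$ shows that the implicit constants are independent of $\varepsilon$ (the $\varepsilon$-dependence in Proposition~\ref{prop:bs_nonlinearity_control} vanishes at $s = 0$), and by Lemma~\ref{lem:ft_heat_one} one has $\norm{w_0 \ast \phi_\varepsilon}_{L^2} \leq \norm{w_0}_{L^2}$. Consequently all $w^\varepsilon$ together with the unsmoothed solution $w$ sit in a common ball of $X_\delta^{0, b}$ of radius $R \approx \norm{w_0}_{L^2}$ on a common time interval $[0, \delta]$. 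Writing $F_\varepsilon(u) \coloneqq (\abs{u \ast \phi_\varepsilon}^{\alpha - 1}(u \ast \phi_\varepsilon)) \ast \phi_\varepsilon$ and $F_0(u) \coloneqq \abs{u}^{\alpha - 1} u$, the split
\begin{equation*}
F_\varepsilon(w^\varepsilon) - F_0(w) = \bigl[F_\varepsilon(w^\varepsilon) - F_\varepsilon(w) \bigr] + \bigl[F_\varepsilon(w) - F_0(w) \bigr]
\end{equation*}
combined with Proposition~\ref{prop:bs_nonlinearity_control}, Lemma~\ref{lem:bs_integral_estimate} and Lemma~\ref{lem:bs_change_b} yields a contractive contribution from the first bracket (in $\norm{w^\varepsilon - w}_{X_\delta^{0,b}}$) which can be absorbed on the left. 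What survives is
\begin{equation*}
\norm{w^\varepsilon - w}_{X_\delta^{0, b}} \lesssim \norm{w_0 \ast \phi_\varepsilon - w_0}_{L^2} + \norm{F_\varepsilon(w) - F_0(w)}_{X_\delta^{0, b - 1}},
\end{equation*}
and both terms on the right tend to zero as $\varepsilon \to 0^+$: the first by standard mollifier properties, the second by splitting off one convolution at a time and using the size estimate of Lemma~\ref{lem:size_estimate}, the embedding $X^{0, 3/8} \hookrightarrow L^4(\T \times \R)$ of Lemma~\ref{lem:bourgain_lebesgue_embedding} in its dual form $L^{4/3} \hookrightarrow X^{0, -3/8}$, and dominated convergence. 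The embedding $X_\delta^{0, b} \hookrightarrow C([0, \delta], L^2(\T))$ of Lemma~\ref{lem:bs_sobolev_embedding} then gives $w^\varepsilon \to w$ in $C([0, \delta], L^2(\T))$, so $\norm{w(\cdot, t)}_{L^2} = \lim_{\varepsilon \to 0^+} \norm{w_0 \ast \phi_\varepsilon}_{L^2} = \norm{w_0}_{L^2}$ on $[0, \delta]$; iterating gives mass conservation for all times and hence global existence.

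The main difficulty will be the quantitative convergence of the mollification error $F_\varepsilon(w) - F_0(w) \to 0$ in $X_\delta^{0, b - 1}$: one must decompose this difference carefully into pieces involving $w \ast \phi_\varepsilon - w$, control the nonlinear factors via the size estimate together with the $L^4$--$X^{0, 3/8}$ duality, and invoke dominated convergence in $L^{4/3}(\T \times \R)$. This mirrors, but slightly refines, the strategy already executed in Lemma~\ref{lem:vanishing_smoothing} for the real-line modified NLS.
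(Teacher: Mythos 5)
Your proposal is correct and follows essentially the same route as the paper: approximate by the globally existing smooth solutions $w^\varepsilon$ of the mollified equation \eqref{eqn:cauchy_nls_smooth_T} (which conserve mass exactly, by the symmetry of the real, even kernel $\phi_\varepsilon$), prove $\norm{w^\varepsilon - w}_{X_\delta^{0,b}} \to 0$ using the $\varepsilon$-uniform estimates at $s=0$ with absorption of the contractive piece and dominated convergence for the mollification error, and conclude mass conservation and hence global existence via the blow-up alternative. The only cosmetic differences are your choice of splitting $F_\varepsilon(w^\varepsilon)-F_0(w)$ (via Proposition \ref{prop:bs_nonlinearity_control} in $X_\delta^{0,-\frac{3}{8}}$ instead of the paper's indicator-set argument in $X_\delta^{0,0}=L^2$) and your explicit verification of mass conservation for the mollified flow, which the paper invokes inside Theorem \ref{thm:nls_smooth_global_hs}.
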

\begin{proof}
Local well-posedness has already been shown in Theorem
\ref{thm:lwp_torus}. Let $w$ denote this local solution. By the blow-up
alternative, it suffices to show mass conservation. To that end, let us
denote by $w^\varepsilon$ the global solution of
\eqref{eqn:cauchy_nls_smooth_T} for $\varepsilon > 0$ from Theorem
\ref{thm:nls_smooth_global_hs}. We will show that for any
$b \in \left(\iv{2}, \frac{5}{8}\right)$ one has
$\norm{w^\varepsilon - w}_{X_\delta^{0, b}} \to 0$ as
$\varepsilon \to 0+$. To that end, notice that
\begin{eqnarray}
\label{eqn:l2_conv}
& & \norm{w^\varepsilon - w}_{X_\delta^{0, b}} \\
\nonumber
& \leq &
\norm{e^{\iu t \partial_{x}^{2}} (w_0 \ast \phi_\varepsilon - w_0)
}_{X_\delta^{0, b}} \\
& & +
\norm{\int_0^t e^{\iu (t - \tau) \partial_x^2} \left[
(\abs{w^\varepsilon \ast \phi_\varepsilon}^{\alpha - 1}
(w^\varepsilon \ast \phi_\varepsilon)) \ast \phi_\varepsilon -
\abs{w}^{\alpha - 1}w
\right] \D{\tau}
}_{X_\delta^{0, b}} \\
\nonumber
& \lesssim &
\norm{w_0 \ast \phi_\varepsilon - w_0}_{L^2(\T)} +
\norm{
(\abs{w^\varepsilon \ast \phi_\varepsilon}^{\alpha - 1}
(w^\varepsilon \ast \phi_\varepsilon)) \ast \phi_\varepsilon -
\abs{w}^{\alpha - 1}w
}_{X_\delta^{0, b - 1}} \\
\nonumber
& \lesssim &
\norm{w_0 \ast \phi_\varepsilon - w_0}_{L^2(\T)} +
\delta^{1 - b} \norm{
(\abs{w^\varepsilon \ast \phi_\varepsilon}^{\alpha - 1}
(w^\varepsilon \ast \phi_\varepsilon)) \ast \phi_\varepsilon -
\abs{w}^{\alpha - 1}w
}_{X_\delta^{0, 0}},
\end{eqnarray}
where we used the fact that $w$ and $w^\varepsilon$ solve the
corresponding fixed-point equations and Lemmata
\ref{lem:bs_linear_estimate}, \ref{lem:bs_integral_estimate} and
\ref{lem:bs_change_b}.

For the first summand, observe that
\begin{equation*}
\norm{w_0 \ast \phi_\varepsilon - w_0}_{L^2(\T)} =
\norm{
\left(\jb{k}^s\hat{w}_0(k) (\sqrt{2 \pi}
\hat{\phi}_\varepsilon(k) - 1) \right)_k
}_{l^2(\Z)}
\end{equation*}
and the right-hand side above converges to $0$ as
$\varepsilon \rightarrow 0+$ by the dominated convergence theorem and
the definition of $\phi_\varepsilon$.

For the second summand, note that
$X_\delta^{0, 0} = L^2([0, \delta] \times \T)$ and hence
\begin{eqnarray*}
& & \norm{
(\abs{w^\varepsilon \ast \phi_\varepsilon}^{\alpha - 1}
(w^\varepsilon \ast \phi_\varepsilon)) \ast \phi_\varepsilon -
\abs{w}^{\alpha - 1}w
}_{L^2([0, \delta] \times \T)} \\
& \leq &
\norm{
(\abs{w}^{\alpha - 1}w) \ast \phi_\varepsilon - \abs{w}^{\alpha - 1}w
}_{L^2([0, \delta] \times \T)} \\
& & +
\norm{
(\abs{w^\varepsilon \ast \phi_\varepsilon}^{\alpha - 1}
(w^\varepsilon \ast \phi_\varepsilon) -
\abs{w}^{\alpha - 1}w) \ast \phi_\varepsilon
}_{L^2([0, \delta] \times \T)}.
\end{eqnarray*}
The first summand above goes to zero due to
$(\phi_\varepsilon)_\varepsilon$ being an approximation to the identity
on $L^{2 \alpha}(\T)$. The other summand is further estimated by
\begin{eqnarray}
\nonumber
& & \norm{
(\abs{w^\varepsilon \ast \phi_\varepsilon}^{\alpha - 1}
(w^\varepsilon \ast \phi_\varepsilon) -
\abs{w}^{\alpha - 1}w) \ast \phi_\varepsilon
}_{L^2([0, \delta] \times \T)} \\
\nonumber
& \leq &
\norm{
\abs{w^\varepsilon \ast \phi_\varepsilon}^{\alpha - 1}
(w^\varepsilon \ast \phi_\varepsilon) - \abs{w}^{\alpha - 1}w
}_{L^2([0, \delta] \times \T)} \\
\nonumber
& \lesssim &
\norm{\left(\abs{w^\varepsilon \ast \phi_\varepsilon}^{\alpha - 1} +
\abs{w}^{\alpha - 1} \right) (w^\varepsilon \ast \phi_\varepsilon - w)
}_{L^2([0, \delta] \times \T)} \\
\nonumber
& \leq &
\norm{\left(\abs{w^\varepsilon \ast \phi_\varepsilon}^{\alpha - 1} +
\abs{w}^{\alpha - 1} \right) [(w^\varepsilon - w) \ast \phi_\varepsilon]
}_{L^2([0, \delta] \times \T)} \\
& &
\label{eqn:remaining_term} +
\norm{\left(\abs{w^\varepsilon \ast \phi_\varepsilon}^{\alpha - 1} +
\abs{w}^{\alpha - 1} \right) (w \ast \phi_\varepsilon - w)
}_{L^2([0, \delta] \times \T)}.
\end{eqnarray}
Let us introduce the set $A^\varepsilon = \set{\abs{w^\varepsilon} \ast
\phi_\varepsilon \leq 1}$. Then the first summand above is further
estimated by
\begin{eqnarray*}
& & \norm{\left(\abs{w^\varepsilon \ast \phi_\varepsilon}^{\alpha - 1} +
\abs{w}^{\alpha - 1} \right) [(w^\varepsilon - w) \ast \phi_\varepsilon]
}_{L^2([0, \delta] \times \T)} \\
& \leq &
\norm{
\left(\Ind_{A^\varepsilon}
\abs{w^\varepsilon \ast \phi_\varepsilon}^{\alpha - 1} +
\Ind_{A^0}
\abs{w}^{\alpha - 1} \right) [(w^\varepsilon - w) \ast \phi_\varepsilon]
}_{L^2([0, \delta] \times \T)} \\
& & +
\norm{
\left(\Ind_{(A^\varepsilon)^c}
\abs{w^\varepsilon \ast \phi_\varepsilon}^{\alpha - 1} +
\Ind_{(A^0)^c}
\abs{w}^{\alpha - 1} \right) [(w^\varepsilon - w) \ast \phi_\varepsilon]
}_{L^2([0, \delta] \times \T)} \\
& \leq &
\norm{w^\varepsilon - w}_{L^2([0, \delta] \times \T)} +
\norm{w}_{L^4([0, \delta] \times \T)}
\norm{w^\varepsilon - w}_{L^4([0, \delta] \times \T)} \\
& \lesssim &
\left(1 + \norm{w}_{X_\delta^{0, b}} \right)
\norm{w^\varepsilon - w}_{X_\delta^{0, b}},
\end{eqnarray*}
where we used Hölder’s and Young’s inequalities for the penultimate
estimate and Lemma \ref{lem:bourgain_lebesgue_embedding} for the last
step. Recall that in front of this term is $\delta^{1 - b}$ and,
w.l.o.g., $\delta \ll 1$. Hence we can just move it to the left-hand
side of \eqref{eqn:l2_conv}. The treatment of the last remaining term
\eqref{eqn:remaining_term} does not require any new techniques.

By the above, $\norm{w^\varepsilon - w}_{X_\delta^{0, b}} \to 0$ as
$\varepsilon \to 0+$. Applying Lemma \ref{lem:bs_sobolev_embedding}, we
see that
\begin{eqnarray*}
\norm{w}_{C([0, T], L^2(\T))} & \leq &
\limsup_{\varepsilon \to 0+} \left[
\norm{w^\varepsilon - w}_{X_\delta^{0, b}} + 
\norm{w^\varepsilon}_{C([0, T], L^2(\T))} \right] \\
& \leq &
\limsup_{\varepsilon \to 0+}
\left[
\norm{w_0 \ast \phi_\varepsilon}_{L^2(\T)} \right] =
\norm{w_0}_{L^2(\T)}
\end{eqnarray*}
and hence the solution $w$ indeed enjoys mass conservation. This
finishes the proof.
\end{proof}
In addition to mass conservation, we also have conservation of the energy. 
\begin{thm}
\label{thm:gwp_cauchy_quadratic_nls_torus}
(Cf. \cite[Theorem 3.1]{ginibre1979} and
\cite[Theorem 2.1]{lebowitz1988}.) The Cauchy problem for the
(sub)quadratic periodic NLS (\eqref{eqn:cauchy_nls_torus} with
$\alpha \in [1, 2]$) is \emph{globally} well-posed in $H^1(\T)$ and the
solution $w$ enjoys energy conservation $E(w(\cdot, t)) = E(w_0)$.
\end{thm}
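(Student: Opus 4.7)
The plan is to mirror the $L^2$-argument of Theorem \ref{thm:gwp_nls_T_l2} but keep track of the additional $H^1$ regularity, using the global smooth solutions $w^\varepsilon$ from Theorem \ref{thm:nls_smooth_global_hs} as approximations. The new ingredient is a uniform-in-$\varepsilon$ $H^1$-bound extracted from the smoothened energy conservation law.

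First I would prove local well-posedness of \eqref{eqn:cauchy_nls_torus} in $H^1(\T)$: since $H^1(\T)$ is a Banach algebra embedded in $L^\infty(\T)$, the size estimate \eqref{eqn:size_estimate} immediately turns the Duhamel map of the proof of Theorem \ref{thm:lwp_torus} (with $\varepsilon = 0$) into a contraction on a closed ball in $C([0, \delta], H^1(\T))$, with $\delta$ depending only on $\norm{w_0}_{H^1(\T)}$. By the uniqueness in Theorem \ref{thm:gwp_nls_T_l2}, this local $H^1$-solution coincides with the $L^2$-solution.

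Second, for the uniform bound I would exploit that Theorem \ref{thm:nls_smooth_global_hs} yields $w^\varepsilon \in C(\R, H^\infty(\T))$ solving \eqref{eqn:cauchy_nls_smooth_T} with data $w_0 \ast \phi_\varepsilon$, and that these satisfy mass conservation together with the smoothened energy conservation law $E_\varepsilon(w^\varepsilon(\cdot, t)) = E_\varepsilon(w_0 \ast \phi_\varepsilon)$. By Lemma \ref{lem:ft_heat_one}, $E_\varepsilon(w_0 \ast \phi_\varepsilon)$ is bounded uniformly in $\varepsilon$, and Gagliardo-Nirenberg (as in the focusing case of the proof of Theorem \ref{thm:nls_smooth_global_hs}) then delivers
\begin{equation*}
\sup_{\varepsilon > 0,\, t \in \R} \norm{w^\varepsilon(\cdot, t)}_{H^1(\T)} \leq C(\norm{w_0}_{H^1(\T)}).
\end{equation*}
From the proof of Theorem \ref{thm:gwp_nls_T_l2}, $w^\varepsilon \to w$ in $X_\delta^{0, b}$, and hence in $C([0, \delta], L^2(\T))$ by Lemma \ref{lem:bs_sobolev_embedding}. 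Combining this $L^2$-convergence with the uniform $H^1$-bound, Lemma \ref{lem:app_BA} forces $w(\cdot, t) \in H^1(\T)$ with the same uniform bound for all $t$, and the blow-up alternative promotes the local $H^1$-solution to a global one.

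The main obstacle is energy conservation. I would pass to the limit $\varepsilon \to 0+$ in $E_\varepsilon(w^\varepsilon(\cdot, t)) = E_\varepsilon(w_0 \ast \phi_\varepsilon)$: the right-hand side converges to $E(w_0)$; the kinetic energy on the left is weakly lower semicontinuous by Lemma \ref{lem:app_BA}; the potential energy $\norm{w^\varepsilon(\cdot, t) \ast \phi_\varepsilon}_{L^{\alpha+1}}^{\alpha+1}$ passes to its limit strongly, since $L^2$-convergence combined with uniform $L^\infty$-control via $H^1(\T) \hookrightarrow L^\infty(\T)$ and interpolation gives strong $L^{\alpha+1}$-convergence. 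This yields $E(w(\cdot, t)) \leq E(w_0)$. The reverse inequality follows from the time reversibility of \eqref{eqn:cauchy_nls_torus} under $(t, w) \mapsto (-t, \bar w)$: running the same argument backward from $w(\cdot, t)$ produces $E(w_0) \leq E(w(\cdot, t))$, hence equality. Finally, strong continuity $w \in C([0, T], H^1(\T))$ follows from the representation
\begin{equation*}
\norm{w(\cdot, t)}_{H^1(\T)}^2 = \norm{w_0}_{L^2}^2 + 2E(w_0) \pm \tfrac{2}{\alpha+1}\norm{w(\cdot, t)}_{L^{\alpha+1}(\T)}^{\alpha+1},
\end{equation*}
whose right-hand side depends continuously on $t$ (using $w \in C([0, T], L^2) \cap L^\infty([0, T], H^1)$ together with Sobolev interpolation); combined with weak $H^1$-continuity, this upgrades to strong $H^1$-continuity.
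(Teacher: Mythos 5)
The bulk of your proposal coincides with the paper's proof: the uniform bound $\sup_{\varepsilon>0}\norm{w^\varepsilon}_{C(\R,H^1(\T))}<\infty$ extracted from $E_\varepsilon(w^\varepsilon(\cdot,t))=E_\varepsilon(w_0\ast\phi_\varepsilon)$ together with Lemma \ref{lem:ft_heat_one} and Gagliardo--Nirenberg, the convergence $w^\varepsilon\to w$ in $C([0,\delta],L^2(\T))$ from the proof of Theorem \ref{thm:gwp_nls_T_l2}, weak lower semicontinuity via Lemma \ref{lem:app_BA}, strong $L^{\alpha+1}$-convergence of the potential term, the two-sided energy inequality (your time-reversal argument replaces the paper's ``interchange $0$ and $t$'', which is the same idea), and the upgrade from weak to strong $H^1$-continuity through the conserved quantities. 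That part is fine.

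The genuine gap is your first step. The size estimate \eqref{eqn:size_estimate} is a pointwise bound on the difference of the nonlinearities and therefore only yields Lipschitz estimates in Lebesgue-type norms; it gives no control whatsoever on $\partial_x\bigl(\abs{w_1}^{\alpha-1}w_1-\abs{w_2}^{\alpha-1}w_2\bigr)$, which is exactly what the $H^1$-norm of the difference requires. Writing $g(z)=\abs{z}^{\alpha-1}z$, the derivative of the difference contains terms like $\bigl(g_z(w_1)-g_z(w_2)\bigr)\partial_x w_1$, and for $\alpha\in(1,2)$ the functions $g_z,g_{\bar z}\sim\abs{z}^{\alpha-1}$ are merely H\"older continuous of order $\alpha-1$, so $w\mapsto g(w)$ is \emph{not} locally Lipschitz from $H^1(\T)$ to $H^1(\T)$ and the Duhamel map is not ``immediately'' a contraction on a ball of $C([0,\delta],H^1(\T))$ (even at $\alpha=2$ a Lipschitz bound for $g_z,g_{\bar z}$, not the size estimate, would be the needed ingredient). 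This non-Lipschitz, non-algebraic character of the (sub)quadratic nonlinearity is precisely why the appendix exists and why Bourgain's remark about losing uniqueness is quoted; consequently the $H^1$ blow-up alternative you invoke afterwards is not available from your argument. One could repair the step by Kato's trick (contraction in the weaker $C([0,\delta],L^2)$ metric on an $H^1$-ball, using $H^1(\T)\hookrightarrow L^\infty(\T)$), but the paper's route shows you do not need any separate $H^1$ local theory: the $L^2$ solution is already global and unique by Theorem \ref{thm:gwp_nls_T_l2}, and iterating your own energy/compactness argument interval by interval gives $w\in L^\infty(\R,H^1(\T))$ and then $w\in C(\R,H^1(\T))$ directly, with well-posedness in $H^1$ inherited (conditionally) from the $L^2$ theory rather than from an $H^1$ contraction plus blow-up alternative.
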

\begin{rem}
In \cite{lebowitz1988} it is claimed that the quadratic NLS is
globally well-posed on the torus. They refer to \cite{ginibre1979},
where it is done on the real line. While our proof of Theorem
\ref{thm:gwp_cauchy_quadratic_nls_torus} borrows some ideas from
\cite{ginibre1979}, we believe that in order to be able to do the
torus case, one needs the result of Bourgain \cite{bourgain1993a}, in
particular, the Bourgain spaces, which appeared 5 years after
\cite{lebowitz1988}.
\end{rem}
\begin{proof}
Let $w_0 \in H^1(\T) \subseteq L^2(\T)$. By Theorem
\ref{thm:gwp_nls_T_l2}, the (sub)quadratic periodic NLS has the unique
global solution $w \in \Cb(\R, L^2(\T))$. It remains to show that
$w \in \Cb(\R, H^1(\T))$. To show that for any $t \in \R$ one has
$w(\cdot, t) \in H^1(\T)$ we first prove that
\begin{equation}
\label{eqn:ts_h1_bnd}
\sup_{\varepsilon > 0} \norm{w^\varepsilon}_{C(\R, H^1(\T))} < \infty.
\end{equation}
By calculations similar to those in the proof of Theorem
\ref{thm:gwp_nls_T_l2}, it suffices to prove the corresponding bound
for the energy $E_\varepsilon(w^\varepsilon(\cdot, t))$.

To that end let $w^\varepsilon$ be the unique global solution of the
modified NLS \eqref{eqn:cauchy_nls_smooth_T} for $\varepsilon > 0$ from
Theorem \ref{thm:nls_smooth_global_hs}. The energy conservation from
Equation \eqref{eqn:energy} implies
\begin{equation*}
E_\varepsilon(w^\varepsilon(\cdot, t)) =
E_\varepsilon(w_0 \ast \phi_\varepsilon) =
\iv{2} \norm{w_0 \ast \phi_\varepsilon}_{\dot{H}^1(\T)}^2 \mp
\iv{\alpha + 1} \norm{w_0 \ast \phi_\varepsilon}_{L^3(\T)}^{\alpha + 1}.
\end{equation*}
Observe that by Lemma \ref{lem:ft_heat_one} the first summand above
satisfies
\begin{equation*}
\norm{w_0 \ast \phi_\varepsilon}_{\dot{H}^1(\T)}^2 \leq
\norm{w_0}_{\dot{H}^1(\T)}^2.
\end{equation*}
If the sign of the second summand is negative (focusing case), there
is nothing left to do. If the sign is positive (defocusing case), one
has
\begin{equation*}
\norm{w_0 \ast \phi_\varepsilon}_{\alpha + 1}^{\alpha + 1} \leq
\norm{w_0}_{\alpha + 1}^{\alpha + 1} \leq 
\norm{w_0}_{L^\infty(\T)}^{\alpha - 1} \norm{w_0}_{L^2(\T)}^2
\leq \norm{w_0}_{H^1(\T)}^{\alpha + 1}
\end{equation*}
by Lemma \ref{lem:transference}. Therefore, the bound
\eqref{eqn:ts_h1_bnd} holds.

Assume for now that $t \in [0, \delta]$, where $\delta$ is the
guaranteed time of existence of $w$ in $L^2(\T)$. From the proof of
Theorem \ref{thm:gwp_nls_T_l2}, one has that
\begin{equation}
\label{eqn:l2_unif_conv}
\lim_{\varepsilon \to 0+}
\norm{w^\varepsilon - w}_{C([0, T], L^2(\T))} = 0.
\end{equation}
Hence, from Equations \eqref{eqn:ts_h1_bnd} and \eqref{eqn:l2_unif_conv}
and Lemma \ref{lem:app_BA} it follows that
\begin{equation*}
\norm{w(\cdot, t)}_{H^1(\T)} \leq
\liminf_{\varepsilon \to 0+} \norm{w^\varepsilon(\cdot, t)}_{H^1(\T)} <
\infty.
\end{equation*}
Observe, that by the above we have
\begin{eqnarray*}
& &
\norm{w^\varepsilon(\cdot, t) \ast \phi_\varepsilon - w
}_{L^{\alpha + 1}(\T)}^{\alpha + 1} \\
& \lesssim &
\norm{(w^\varepsilon(\cdot, t) -
w(\cdot, t)) \ast \phi_\varepsilon}_{L^{\alpha + 1}(\T)}^{\alpha + 1} +
\norm{(w(\cdot, t) \ast \phi_\varepsilon - w(\cdot, t)
}_{L^{\alpha + 1}(\T)}^{\alpha + 1} \\
& \leq &
\left(\norm{w^\varepsilon(\cdot, t)}_{L^\infty}^{\alpha - 1} +
\norm{w(\cdot, t)}_{L^\infty}^{\alpha - 1}\right)
\norm{w^\varepsilon(\cdot, t) - w(\cdot, t)}_{L^2(\T)}^2 \\
& & 
+ \norm{(w(\cdot, t) \ast \phi_\varepsilon - w(\cdot, t)
}_{L^{\alpha + 1}(\T)}^{\alpha + 1}
\xrightarrow{\varepsilon \to 0+} 0
\end{eqnarray*}
and hence
\begin{equation*}
E_0(w(\cdot, t)) \leq
\liminf_{\varepsilon \to 0+} E_\varepsilon(w^\varepsilon(\cdot, t)) \leq
E_0(w_0).
\end{equation*}
Interchanging $0$ and $t$ shows the reverse inequality and proves
the energy conservation $E_0(w_0) = E_0(w(\cdot, t))$.

Reiterating the argument proves that $w \in L^\infty(\R, H^1(\T))$.
It remains to show that $w \in C(\R, H^1(\T))$. To that end, observe
that $t \mapsto w(\cdot, t)$ is weakly continuous in $L^2(\T)$. But,
by the above, $\sup_{t \in \R} \norm{w(\cdot, t)}_{H^1(\T)} < \infty$
and hence $t \mapsto w(\cdot, t)$ is weakly continuous in $H^1(\T)$.
By the observation
\begin{equation*}
\norm{w(\cdot, t) - w(\cdot, s)}_{H^1(\T)}^2 = 
\norm{w(\cdot, t)}_{H^1(\T)}^2 + \norm{w(\cdot, s)}_{H^1(\T)}^2 -
2 \Re \dup{w(\cdot, t)}{w(\cdot, s)}_{H^1(\T)},
\end{equation*}
it is enough to show that $t \mapsto \norm{w(\cdot, t)}_{H^1(\T)}$
is continuous. (See \cite[Proposition 3.32]{brezis2011} for this result
in a more general setting.)

To that end, observe that by the mass and energy conservation we have
\begin{eqnarray*}
\norm{w(\cdot, t)}_{H^1(\T)}^2 & = &
2 E(w(\cdot, t)) \pm \frac{2}{\alpha + 1} \norm{w(\cdot, t)
}_{L^{\alpha + 1}(\T)}^{\alpha + 1} +
\norm{w(\cdot, t)}_{L^2(\T)}^2 \\
& = &
2 E_0(w_0) \pm \frac{2}{\alpha + 1} \norm{w(\cdot, t)
}_{L^{\alpha + 1}(\T)}^{\alpha + 1} +
\norm{w_0}_{L^2(\T)}^2.
\end{eqnarray*}
Moreover, for any $t, s \in \R$ we have
\begin{eqnarray*}
& &
\abs{\norm{w(\cdot, t)}_{L^{\alpha + 1}(\T)}^{\alpha + 1} -
\norm{w(\cdot, s)}_{L^{\alpha + 1}(\T)}^{\alpha + 1}} \\
& \lesssim &
\int_{\T} \abs{w(x, t) - w(x, s)}
\left(\abs{w(x, t)}^\alpha + \abs{w(x, s)}^\alpha \right) \D{x} \\
& \lesssim &
\norm{w}_{L^\infty(\R, H^1(\T))}^\alpha 
\norm{w(\cdot, t) - w(\cdot, s)}_{L^2(\T)}.
\end{eqnarray*}
The fact that $w \in \Cb(\R, L^2(\T))$ concludes the argument.
\end{proof}

\section*{Acknowledgments}
Funded by the Deutsche Forschungsgemeinschaft (DFG, German Research
Foundation) – Project-ID 258734477 – SFB 1173. Dirk Hundertmark
thanks Alfried Krupp von Bohlen und Halbach Foundation for their
financial support.

\printbibliography

\end{document}